\documentclass[11pt,reqno]{amsart}
\usepackage{graphicx,xcolor,cite,mathtools,bbold}
\usepackage{amssymb,amsmath,amsthm,mathrsfs,paralist,bm,esint,setspace}
\RequirePackage[colorlinks,citecolor=blue,urlcolor=blue]{hyperref}
\usepackage{cases}
\usepackage{color}
\usepackage{setspace}
\let\OLDthebibliography\thebibliography
\renewcommand\thebibliography[1]{
  \OLDthebibliography{#1}
  \setlength{\parskip}{3pt}
  \setlength{\itemsep}{0pt plus 0.3ex}
}

\newcommand{\<}{\langle}
\renewcommand{\>}{\rangle}

\newcommand{\N}{\mathbb{N}}
\newcommand{\Z}{\mathbb{Z}}

\newcommand{\R}{\mathbb{R}}

\newcommand{\T}{\mathbb{T}}

\newcommand{\lip}{\text{\rm Lip}}

\renewcommand{\P}{\mathrm{P}}
\newcommand{\E}{\mathrm{E}}

\newcommand{\cW}{\mathcal{W}}

\renewcommand{\d}{{\rm d}}

\newcommand{\e}{{\rm e}}
\renewcommand{\geq}{\geqslant}
\renewcommand{\leq}{\leqslant}
\renewcommand{\ge}{\geqslant}
\renewcommand{\le}{\leqslant}

\title[An invariance principle for SPDEs]{An Invariance Principle for some\\%
Reaction-Diffusion Equations with a Multiplicative Random Source} 
\author[D. Khoshnevisan]{Davar Khoshnevisan}
	\address{Department of Mathematics, University of Utah,
		Salt Lake City, Utah 84112-0090, USA}
	\email{davar@math.utah.edu}
\author[K. Kim]{Kunwoo Kim}
	\address{Department of  Mathematics, Pohang University of Science 
	and Technology (POSTECH), Pohang, Gyeongbuk, 37673, Korea} 
	\email{kunwoo@postech.ac.kr}
\author[C. Mueller]{Carl Mueller}
	\address{Department of Mathematics, University of Rochester,
	Rochester, New York 14627, USA}
	\email{carl.e.mueller@rochester.edu}
\thanks{Research supported in part by the United States National Science Foundation (DMS-2245242) and
	the National Research Foundation of Korea (2019R1A5A1028324 and RS-2023-00244382)}

\newtheorem{stat}{Statement}[section]
\newtheorem{proposition}[stat]{Proposition}

\newtheorem{corollary}[stat]{Corollary}
\newtheorem{theorem}[stat]{Theorem}
\newtheorem{lemma}[stat]{Lemma}

\newtheorem*{claim}{Claim}

\theoremstyle{definition} 

\newtheorem{assumption}[stat]{Assumption}

\newtheorem{remark}[stat]{Remark}

\numberwithin{equation}{section}

\begin{document}
\maketitle
\begin{abstract}
	We establish a notion of universality for the parabolic
	Anderson model via an invariance principle for a wide family
	of parabolic stochastic partial differential equations. We then use
	this invariance principle in order to provide an asymptotic theory
	for a wide class of non-linear SPDEs.  A novel ingredient
	of this invariance principle is the dissipativity of the underlying
	stochastic PDE.
\end{abstract}
\section{Introduction}
We consider reaction-diffusion equations of the  type
\begin{equation}\label{SHE}
\left[\begin{split}
	&\partial_t w = \partial^2_x w + f(w) +  g(w)\dot{W}\hskip.57in\text{on $(0\,,\infty)\times\T$},\\
	&\text{subject to }w(0)=w_0\hskip1.43in\text{on $\T$},
\end{split}\right.
\end{equation}
where $\T=(\R/2\Z)\cong [-1\,,1]$ denotes a torus\footnote{%
	Equivalently, we can think of $\T$ as the interval
	$[-1\,,1]$ and impose periodic boundary conditions on \eqref{SHE}.}
and $w=w(t\,,x)$ for $t\ge0$ and $x\in\T$.  
Fife \cite{Fife} contains a masterly account of
the general theory of reaction-diffusions equations. 
As a brief summary we mention only that
equations such as \eqref{SHE} arise prominently in the study of chemical reactions,
in which case the function $f$ encodes the nature of the underlying reaction.
Here, we are considering the case that the force has the form $g(w)\dot{W}$ 
for a nice function $g$ and a space-time white noise 
$\dot{W}=\{\dot{W}(t\,,x)\}_{t\ge0,x\in\T}$. In this way, \eqref{SHE} 
will be viewed as a stochastic PDE (SPDE)
in the sense of Walsh \cite{Walsh}.

Let us assume \emph{a priori} that \eqref{SHE} is (strongly) dissipative; that is,
\begin{equation}\label{Diss}\textstyle
	\lim_{t\to\infty} \sup_{x\in\T} |w(t\,,x)|=0
	\quad\text{almost surely}.
\end{equation}
Suppose also that $f$ and $g$ are continuously differentiable near the origin and both
vanish at the origin. Then a linear approximation of $f$ and $g$ at 
the origin suggests that, with probability one, 
\[
	f(w(t))\approx f'(0)w(t)
	\quad\text{and}\quad
	g(w(t))\approx g'(0)w(t)
	\quad\text{as $t\to\infty$}, 
\]
where, for a function $F(t\,,x)$, we write $F(t)$ to denote
$x\mapsto F(t\,,x)$.  Also, ``$A(t)\approx B(t)$'' loosely means
``$A(t)=B(t)+$ smaller-order terms.''
Thus, we can expect a rigorously stated version
of the following large-time approximation: With probability one,
\begin{equation}\label{SHE:approx}
	\partial_t w(t) \approx \partial^2_x w(t) + f'(0)w(t) + g'(0)w(t)
	\dot{W}(t)\qquad\forall t\gg1.
\end{equation}
Then the preceding, and some sort of ``asymptotic stability,'' together might suggest that
$w(t)\approx u(t)$ when $t\gg1$, and when $u=\{u(t\,,x)\}_{t\ge0,x\in\T}$ solves the linear stochastic PDE (or SPDE),
\begin{equation}\label{PAM}\left[\begin{split}
	&\partial_t u = \partial^2_x u + \mu u +  \sigma u\dot{W}\hskip.57in\text{on $(0\,,\infty)\times\T$},\\
	&\text{subject to }u(0)=w_0\hskip1.43in\text{on $\T$},
\end{split}\right.\end{equation}
where  here, and from now on,
\begin{equation}\label{mu:sigma}
	\mu = f'(0+)
	\quad\text{and}\quad
	\sigma=g'(0+).
\end{equation}

SPDEs of the type \eqref{PAM} are examples
of the \emph{parabolic Anderson model} (or PAM), with drift $\mu$ and
diffusion coefficient $\sigma$, so named by  Carmona and Molchanov \cite{CM94}
in the case that $(\partial^2_x\,,\T)$ is replaced by $(\Delta_d\,,\Z^d)$ where $\Delta_d$
denotes the graph Laplacian on $\Z^d$. 
And if the preceding dissipation argument were true, 
then it would prove a notion of universality of PAM by way of 
establishing an invariance principle that says that at large times the family \eqref{SHE}
indexed by the infinite-dimensional parameter 
space of all $(f,g)\in C^1_{\textit{loc}}(\T)\times C^1_{\textit{loc}}(\T)$ is
approximately  \eqref{PAM}
indexed by the two-dimensional parameter space of all $(\mu\,,\sigma)\in\R\times(\R\setminus\{0\})$. We pause to mention also that such a  property (if true) can be understood
as an invariance principle in the same vein as Donsker's theorem which states that
if $S_n=X_1+\cdots +X_n$ is a sum of $n$ i.i.d.\ random variables in $L^2(\Omega)$, then 
(on a suitable probability space) $S_n\approx mn +s B_n$ when $n\gg1$,
where $m=\E X_1$ and $s^2=\text{Var}X_1$, and $B=\{B_t\}_{t\ge0}$ is 
a standard Brownian motion on the line. Thus we see that the law of $S_n$ is immaterial
when $n\gg1$ except for two associated parameters $m$ and $s$.

Unfortunately, the outlined dissipation argument for weak universality is flawed, for example
because:
\begin{compactenum}
	\item In contrast with classical partial differential equations, 
		SPDEs are not local equations because they lack the requisite 
		{\it a priori} smoothness. This means that \eqref{SHE:approx} does not have
		a natural rigorous meaning, and so the argument that led to
		\eqref{SHE:approx} fails promptly. 
	\item It is unlikely that there exists a suitable large-time stability result that
		would imply that even a generous interpretation of \eqref{SHE:approx}
		would imply that  $w(t) \approx u(t)$ as $t\to\infty$. And such a result 
		certainly does not currently exist.
\end{compactenum}

Despite the flaws of the above dissipation argument,
the goal of this paper is to prove that the end result, namely the 
approximation of \eqref{SHE} by \eqref{PAM} at large
times, is nevertheless true under a broad set of conditions on $f$ and $g$. Under such conditions,
we will prove a statement that is only slightly weaker than, but strong enough to have
all of the consequences that we would like from, the following: On a suitably
constructed probability space, it is possible to construct $u$ and $w$ such that
\[
	\lim_{t\to\infty} u(t)/w(t)=1,
	\text{\ \  equivalently,\ \  }\log u(t)-\log w(t)\to 0\text{ as }t\to\infty;
\]
see Theorem \ref{th:main} for the precise statement. We will see also that the conditions on
$f$ and $g$  have ``physical'' meanings and, among other things, imply
the dissipativity \eqref{Diss} of the solution to \eqref{SHE}. We will say 
some things about this topic in \S\ref{sec:cor} below. Our method of proof requires us
to first develop a number of preliminary results and is therefore a little too involved
to summarize in the Introduction. The introductory portion of
\S\ref{sec:Pf} contains a summary outline of our methods, easier to discuss
once some of the preliminary results are established. The summary is then
followed by the details of the proof itself.

The linear SPDE \eqref{PAM} has now a well-developed asymptotic theory; see
Brunet, Gu, and Komorowski \cite{GK2} and
Gu and Komorowski \cite{GK,GK2022}. An up-to-date summary of these results,
and more, can be found in a recent survey article by Gu and Komorowski \cite{GK2024}.
When combined with those works, the results of this paper
yield an associated large-$t$ theory for a wide class of
dissipative solutions to the nonlinear equation \eqref{SHE};
see Corollaries \ref{cor:LLN} and \ref{cor:CLT} below. It would be interesting to know 
whether one can develop a parallel theory for the same equations but where the spatial variable
$x$ is in $\R$ and not $\T$. It is possible that one can use large-scale local dissipation
results such as those in our recent work \cite{KKM2024} in place of the dissipation phenomenon 
here in order to relate \eqref{SHE} on $\R_+\times\R$ to its linearized form \eqref{PAM} on
$\R_+\times\R$. If this can be done then one can hope
that the recent discoveries of Dauvergne, Ortmann, and Vir\'ag \cite{DauvergneOrtmannVirag} and
Quastel and Sarkar \cite{QuastelSarkar} might yield the Brownian landscape as the limit of a large
number of locally dissipative solutions to \eqref{SHE}.

An outline of the paper follows: In \S\ref{sec:main} we identify the key technical hypotheses
of this paper and use them to state our principal result. A few quick corollaries (law of large
numbers and a central limit theorem) are mentioned briefly in \S\ref{sec:cor}. 
Several essential technical facts about the SPDE \ref{SHE} are collected in \S\ref{sec:facts} below. 
The main result of this paper hinges on a coupling method that is described subsequently in 
\S\ref{sec:coupling}, followed by the proof of our main theorem (Theorem \ref{th:main}) which
appears in \S\ref{sec:Pf}.%

We conclude the Introduction by setting forth some notation that we plan to use 
without further mention.%

Throughout, $\log$ denotes the natural logarithm, and 
$\log_+ x = \log(x\vee\e)$ for all $x\ge0$.
For all $x,y\in\T$ and $t>0$, we define the heat kernel as
\begin{equation}\label{p:G}
	p_t(x\,,y) =  \sum_{n=-\infty}^\infty G_t(x-y+2n),
	\text{ for }\\
	G_t(a) =  (4\pi t^2)^{-1/2} \e^{-a^2/(4t)},
\end{equation}
valid for every $a\in\R$. Note that $p_t(x\,,y)=p_t(y\,,x)$
depends only on $x-y$, where we are writing abelian-group operations
on $\T$ using additive notation as is wont. With this in mind, we frequently write
\[
	p_t(y-x) = p_t(x-y) = p_t(x\,,y)\qquad\forall t>0,\ x,y\in\T,
\]
as is customary in the literature of L\'evy processes/random walks. In this way we
may identify the semigroup as a convolution semigroup:
$(p_t*\phi)(x) = \int_\T p_t(x\,,y)\phi(y)\,\d y,$
where ``$*$'' denotes convolution on $L^2(\T)$.
With this notation in mind, 
let us also recall that \eqref{SHE} is shorthand for the mild form,
\begin{equation}\label{eq:mild}\begin{split}
	w(t\,,x) &\textstyle = (p_t*w_0)(x)
          + \int_{(0,t)\times\T} p_{t-s}(x\,,y)
	 	f(w(s\,,y))\,\d s\,\d y\\
	 &\textstyle\hskip1.2in+ \int_{(0,t)\times\T} p_{t-s}(x\,,y)
		 g(w(s\,,y))\,\dot{W}(\d s\,\d y),
\end{split}\end{equation}

Whenever $\mathscr{G}\subseteq\mathscr{F}$ is a sub sigma-algebra of the underlying probability
space $(\Omega\,,\mathscr{F},\P)$ we might write
\[
	\E_{\mathscr{G}} (X) = \E(X\mid\mathscr{G})
	\qquad\forall X\in L^1(\Omega) = L^1(\Omega\,,\mathscr{F},\P),
\]
in order to simplify the exposition by shortening some long formulas. Similarly, 
$\P_{\mathscr{G}}$ might on occasion denote the conditional probability $\P(\,\cdots\mid\mathscr{G})$.

Throughout, we define $\{\mathscr{F}(t)\}_{t\ge0}$ to be the
filtration associated to the space-time
white noise $\dot{W}$; that is,  define $\mathscr{F}(t)$ 
to be the $\sigma$-algebra generated by all Wiener integrals of the form
$\int_{(0,s)\times\T}\varphi(y)\,W(\d s\,\d y)$ as $(s\,,\varphi)$ 
ranges over $[0\,,t]\times L^2(\T)$.%

\section{The Main Result}\label{sec:main}

In this section we set up the hypotheses under which we can prove an invariance principle,
and then follow them with a precise statement of that theorem. Recall $\mu,\sigma\in\R$
from \eqref{mu:sigma}, and let
\begin{equation}\label{R:star}\textstyle
	\mathscr{E}(a) = \sup_{z\in(0,a)}\left| z^{-1}f(z) - \mu\right| + 
	\sup_{z\in(0,a)}\left| z^{-1}g(z) - \sigma\right|
	\quad\forall a>0.
\end{equation}
The function $\mathscr{E}$ can be locally finite only when $f(0)=g(0)=0$,
in which case $\mathscr{E}$ may serve as a modulus of continuity 
for both $f'$ and $g'$ to the right of zero. The following formulates many of
the basic assumptions under which our results hold and, in particular, render
$\mathscr{E}$ a modulus of continuity in the sense mentioned above.

\begin{assumption}\label{ass:par}	
	The initial profile $w_0:\T\to\R_+$ can
	be random but must be independent of $\dot{W}$,
	and $\sup_\T w_0 \in L^k(\Omega)$ for every $k\ge1$. We also assume
	that $\int_\T w_0(x)\,\d x>0$ a.s. The functions
	$f,g:\R\to\R$ are both nonrandom, $f(0)=g(0)=0$  and:
	\begin{compactenum}
		\item $f$ and $g$ are Lipschitz continuous on $\R$, both are
			differentiable from the right at the origin; 
		\item ${\rm L}_g = \inf_{z>0}|g(z)/z|>0$;
		\item Recall \eqref{R:star} and assume that there exists a number
			$\chi>2$ such that
			\[
				\mathscr{E}(z)  \lesssim |\log z|^{-\chi}
				\text{\ \ uniformly for all $z\in(0\,,1)$}.
			\]
	\end{compactenum}
\end{assumption}

Next we say a few things about Assumption \ref{ass:par}.

\begin{remark}
	Part (1) of Assumption \ref{ass:par} and the condition that 
	$\sup_\T w_0 \in L^k(\Omega)$ for every $k\ge1$ are enough to give 
	existence and uniqueness for \eqref{SHE};  see Walsh \cite[Chapter 3]{Walsh}.
	Part (2) of Assumption \ref{ass:par} is an intermittency
	type condition; see Foondun and Khoshnevisan \cite{FoondunKhoshnevisan2009}. Finally,
	Part (3) of Assumption \ref{ass:par} holds whenever $f'$ and $g'$
	are locally H\"older continuous in a neighborhood to the right of the origin.
\end{remark}

We can now state our main result.

\begin{theorem}\label{th:main}
	Suppose that Assumption \ref{ass:par} holds, and
	recall the definition of the constant ${\rm L}_g$ from part (2) of 
        that assumption. Suppose that
	\begin{equation}\label{cond:f}
		\sup_{z>0} |f(z)/z|<  {\rm L}_g^2/64.
	\end{equation}
	Then, for every $\varepsilon\in(0\,,1)$
	there exist nonrandom numbers $S=S(\varepsilon)>0$ and $\beta
	=\beta(\varepsilon) \in (0\,, 1)$, 
	and a coupling $(u\,,w)$ -- that depends on the choice of $\varepsilon$ --
	where the space-time random field $w = \{w(t\,,x)\}_{t\ge0,x\in\T}$ solves \eqref{SHE} and 
	the space-time random field $u=\{u(t\,,x)\}_{t\ge S,x\in\T}$ is a solution to \eqref{PAM}
	shifted by $S$ time units and started at $w(S)$; that is,
	\begin{equation}\label{eq:main}\left[\begin{split}
		&\partial_t u = \partial^2_x u + \mu u  + \sigma u\dot{W}_0
			\hskip.45in\text{on $(S\,,\infty)\times\T$},\\
		&\text{subject to }u(0)=w(S)\hskip.9in\text{on $\T$},
	\end{split}\right.\end{equation}
	for a suitably constructed
	space-time white noise $\dot{W}_0$, such that
	\[\textstyle
		\P\left\{ \| \log w(t) - \log u(t) \|_{C(\T)} 
		=O( t^{-\beta})\text{\ as $t\to\infty$} \right\} \ge 
		1 - \varepsilon. 
	\]
\end{theorem}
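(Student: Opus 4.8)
The plan is to work with the logarithms $L_w(t)=\log w(t)$ and $L_u(t)=\log u(t)$ (which make sense because, as follows from the intermittency-type bound in part (2) of Assumption~\ref{ass:par} together with the positivity of $\int_\T w_0$, both $w(t,x)$ and $u(t,x)$ are strictly positive for all $t>0$, $x\in\T$, a.s.), and to derive a \emph{closed stochastic differential inequality} for the difference $D(t)=\|L_w(t)-L_u(t)\|_{C(\T)}$ that forces $D(t)$ to decay polynomially with high probability. The coupling is the obvious one: we run \eqref{SHE} up to time $S$, then at time $S$ we let $u$ solve the \emph{linear} PAM \eqref{PAM} driven by the \emph{same} white noise $\dot W_0=\dot W$ restricted to $(S,\infty)\times\T$, and started from $u(S)=w(S)$. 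With this choice, $D(S)=0$, and the whole game is to control how fast $D(t)$ can grow back. The key mechanism — the ``novel ingredient'' advertised in the abstract — is that the linear PAM is itself dissipative in a quantitative way (this is where \eqref{cond:f}, i.e. $\sup_{z>0}|f(z)/z|<{\rm L}_g^2/64$, enters: the smallness of the reaction term relative to the noise strength ${\rm L}_g$ guarantees a strictly negative Lyapunov exponent with a spectral gap), so the difference process, which has zero initial data and is fed only by the \emph{error terms} $f(w)-\mu w$ and $g(w)-\sigma w$, cannot build up.

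The key steps, in order, are: \textbf{(i)} Use the mild formulations \eqref{eq:mild} for $w$ and its analogue for $u$ to write $w(t)-u(t)$ as a sum of a deterministic (drift) convolution integral involving $f(w(s,\cdot))-\mu u(s,\cdot)$ and a stochastic convolution integral involving $g(w(s,\cdot))-\sigma u(s,\cdot)$, each against the heat kernel $p_{t-s}$; split the drift integrand as $[f(w)-\mu w]+\mu[w-u]$ and likewise for the noise term, so that the difference satisfies a linear-PAM-type equation with zero initial data and two \emph{forcing} terms of size $\mathscr{E}(\sup_\T|w(s)|)\cdot\sup_\T|w(s)|$. \textbf{(ii)} Invoke the dissipativity \eqref{Diss} — which, by the results collected in \S\ref{sec:facts}, holds under Assumption~\ref{ass:par} and \eqref{cond:f} and comes with a polynomial rate, say $\sup_\T|w(t)|=O(t^{-\gamma})$ on an event of probability $\ge 1-\varepsilon/2$ — and feed this into the modulus bound $\mathscr{E}(z)\lesssim|\log z|^{-\chi}$ with $\chi>2$, so the forcing terms decay like (a polynomial in $t$) times $(\log t)^{-\chi}$. \textbf{(iii)} Apply the quantitative dissipativity of the linear PAM propagator (Feynman–Kac / moment bounds, available from the PAM literature cited, or re-derived in \S\ref{sec:facts}) to the variation-of-constants representation of $w-u$: the propagator contracts at rate $\e^{-c(t-s)}$ in the relevant norm, so $\|w(t)-u(t)\|_{C(\T)}\le \int_0^t (\text{contraction factor})\cdot(\text{forcing at }s)\,\d s$, and a Gronwall/renewal-type estimate turns this into $\|w(t)-u(t)\|_{C(\T)}=O(t^{-\beta'})$ for a suitable $\beta'>0$ on a further event of probability $\ge1-\varepsilon/2$. \textbf{(iv)} Choose $S=S(\varepsilon)$ large enough that the ``burn-in'' is negligible and the above high-probability events genuinely have the stated probabilities; then divide through by $w(t)$ (which is itself decaying, but only polynomially, so one must check the quotient $\|w(t)-u(t)\|/\inf_\T w(t)$ is still $O(t^{-\beta})$ — this dictates the final value of $\beta$ and is the reason the exponent in the modulus hypothesis must beat the polynomial dissipation rate, hence $\chi>2$) and pass to logarithms via $|\log a-\log b|\le|a-b|/\min(a,b)$.

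The main obstacle I expect is \textbf{step (iii)}: making the ``quantitative dissipativity of the linear PAM'' precise and usable in a \emph{pathwise, sup-norm} sense. Moment bounds on the PAM propagator are classical, but here we need them uniformly in the initial datum $w(S)$ (which is random), in a form that survives the nonlinear feedback loop — the difference $w-u$ appears on both sides of the variation-of-constants formula through the $\mu(w-u)$ and $\sigma(w-u)\dot W$ terms — so one needs a genuine contraction estimate for the \emph{random} (multiplicative-noise) evolution operator, not just for its first moment. Controlling the stochastic convolution term in $C(\T)$ rather than in $L^2(\Omega\times\T)$ will require a Burkholder–Davis–Gundy plus Kolmogorov continuity argument, carefully tracking constants so that the gain from \eqref{cond:f} (the factor $\tfrac{1}{64}$ leaves room for the $4\times$ loss in a typical second-moment-of-the-stochastic-integral estimate) is not eaten up. The interplay between this stochastic contraction, the polynomial dissipation rate of $w$, and the $(\log t)^{-\chi}$ modulus is the heart of the proof, and getting all three to combine into a clean $O(t^{-\beta})$ bound — and verifying that the coupling defined via the shared noise actually yields a bona fide solution of \eqref{eq:main} — is where the real work lies.
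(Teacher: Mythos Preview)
Your proposal has a genuine conceptual gap, centered exactly where you suspected: step (iii).

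First, two factual corrections. The dissipation of $w$ under \eqref{cond:f} is \emph{exponential}, not polynomial: the paper shows (Lemma \ref{lem:dissipation}) that $\|w(t)\|_{C(\T)}\le \e^{-\gamma t}$ for all $t\ge T_0$ with probability $\ge 1-\varepsilon$, where $\gamma>0$ is as in \eqref{gamma}. Consequently the modulus bound gives $\mathscr{E}(\|w(s)\|)\lesssim s^{-\chi}$, not $(\log s)^{-\chi}$. Second, the linear PAM propagator does \emph{not} contract at a deterministic rate $\e^{-c(t-s)}$ in any sup-norm sense; it is a random multiplicative operator whose growth/decay is governed by the same Lyapunov exponent as $w$ itself. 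There is no spectral gap to exploit between the homogeneous part of the $w-u$ equation and $w$.

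Now run your own heuristic with these corrections. With the shared-noise coupling, $w-u$ solves a PAM-type equation on $(S,\infty)$ with zero initial data and forcing of order $\mathscr{E}(\|w(s)\|)\,\|w(s)\|\sim s^{-\chi}\e^{-\gamma s}$. The homogeneous propagator from $s$ to $t$ behaves like $\e^{-\gamma(t-s)}$ (it shares the Lyapunov exponent of $w$, not a strictly smaller one). So the variation-of-constants formula gives, heuristically,
\[
\|w(t)-u(t)\|\sim \int_S^t \e^{-\gamma(t-s)}\, s^{-\chi}\e^{-\gamma s}\,\d s
= \e^{-\gamma t}\int_S^t s^{-\chi}\,\d s,
\]
and dividing by $\inf_\T w(t)\sim \e^{-\gamma t}$ yields $\int_S^t s^{-\chi}\,\d s$, which \emph{converges to a nonzero constant} as $t\to\infty$ rather than decaying. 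Your scheme therefore gives at best $\|\log w(t)-\log u(t)\|_{C(\T)}=O(1)$, not $O(t^{-\beta})$. The accumulated linearization error over $[S,t]$ has no mechanism to be erased.

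The paper's proof supplies exactly this missing mechanism, and it is \emph{not} a contraction estimate. Instead it introduces an intermediary process $v$ that solves the PAM on each short interval $I_n=[T_n,T_{n+1})$ but is \emph{reset} to $w(T_n)$ at the left endpoint; a tracking estimate (Proposition \ref{pr:tracking}) then controls $\|w-v\|$ on each $I_n$ by $\mathscr{E}(\e^{-\gamma T_n})\|w(T_n)\|$. The genuine PAM $u$ is built via an AM/PM coupling (\S\ref{sec:coupling}): on each $I_n$ one mixes $\dot W$ with an independent noise $\dot W_n$ through carefully chosen coefficients $\Psi_n,\Phi_n$ so that (a) $u$ is driven by a bona fide white noise and hence is a true PAM, and (b) $u$ and $v$ merge within $I_n$ with high probability (Proposition \ref{pr:coupling}), so that $u(T_{n+1})=v(T_{n+1}-)\approx w(T_{n+1})$. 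The coupling thus \emph{repeatedly resets} the error rather than letting it accumulate, and a Borel--Cantelli sum over $n$ of four explicit error terms $Q_1,\ldots,Q_4$ yields the polynomial rate. Your ``obvious'' shared-noise coupling cannot achieve this; the construction of $\dot W_0$ in \eqref{eq:main} is genuinely nontrivial.
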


It is possible to use Theorem \ref{th:main} and techniques from stochastic analysis
in order to improve Theorem \ref{th:main} itself. We will give an example below that is
particularly well motivated by the theory of reaction-diffusion equations and spatiotemporal
intermittency. Before we delve into that discussion, let us stop to make two brief remarks 
about Theorem \ref{th:main}.

\begin{remark}[KPZ asymptotics]
	 We can formally apply the Hopf-Cole transformation to the construction of 
	the parabolic Anderson model in Theorem \ref{th:main}:
	$h(t\,,x) = \log u(t\,,x)$ for all $t \ge S$ and $x\in\T$.
	It follows informally that $h$ ought to solve
	the KPZ equation (see Hairer \cite{Hairer2013} for a rigorous statement),
	$\partial_t h = \partial^2_x h + (\partial_x h)^2 + \mu + \sigma \dot{W}_0$
	on $(S\,,\infty)\times\T$, subject to $h(S)= \log w(S)$.	
	Consequently, we see that in the setting of Theorem \ref{th:main},
	$\lim_{t\to\infty}\,\| \log w(t) - h(t) \|_{C(\T)}=0$ with probability 
	at least $1-\varepsilon$. Since $\varepsilon$ is arbitrary, it follows that many
	large-time properties of $h$ that do not depend on its initial state are inherited
	almost surely by $\log w$.
\end{remark}
\begin{remark}[Level of the noise]\label{rem:noise}
	We may apply Theorem \ref{th:main} with $g$ replaced everywhere 
	by $\lambda g$ where $\lambda\in\R\setminus\{0\}$ and $g$ 
	is viewed as a fixed function that satisfies
	Assumption \ref{ass:par}. In this way we can think of $\lambda$
	as the ``level of the noise'' in the reaction-diffusion equation \eqref{SHE}.
	Condition \eqref{cond:f} can now be viewed as a quantitative way to say
	that $\lambda$ is sufficiently large. That is, Theorem \ref{th:main},
	and in fact the entire paper, is concerned solely with the ``high noise
	regime,'' which is known to loosely coincide with 
	some sort of ``dissipative regime''; see Liggett \cite[Chapter 12]{Liggett}
	for example.
\end{remark}

Theorem \ref{th:main} can sometimes hold when $f$ is not Lipschitz continuous.
Standard examples can include $f(x)= a x - bx^2$ (Fischer-KPP) or $f(x)=ax-bx^3$
(Allen-Cahn) where $a,b\in\R$ are fixed parameters. 
Since in such examples $f$ is not globally Lipschitz, let us spend a few lines explaining how they too
can fit within the scope of Theorem \ref{th:main}.
More generally, let us consider a function $f$ of the form
$f(x)=ax-V(x)$ where $V:\R\to\R_+$ is locally Lipschitz (and non negative). 
There is also an associated comparison
theorem which shows that $\P\{0\le w\le \bar{w}\}=1$ where $\bar{w}$ denotes the
solution to \eqref{SHE} when $f(x)$ is replaced everywhere by $ax$, equivalently $V$
is replaced by zero. Since $\bar{w}$ is dissipative when $|a|\le{\rm L}_g^2/64$
(see \eqref{cond:f}), it follows that $w$ is also dissipative. Therefore, we can retrofit the proof of Theorem 
\ref{th:main}, making only minor changes, in order to deduce that Theorem \ref{th:main}
is valid, when $f(x)=ax-V(x)$ for a nonnegative locally Lipschitz continuous 
$V$ that is differentiable to the right of zero, with $\mu=a-V'(0)$. In light of 
Remark \ref{rem:noise}, it follows that such SPDEs are asymptotically linearizable when
the level of the noise is sufficiently high. This refines a result of Khoshnevisan, Kim, Mueller,
and Shiu \cite{KKM2023a} which proved that 
\eqref{SHE} has a unique invariant measure (no phase transition) when the noise level is high. 
By contrast one can see in the
same reference \cite{KKM2023a} that, when the level of the noise is low, there are two distinct extremal
ergodic invariant measures (phase transition). Such results
were anticipated earlier in the work of Zimmermann, Toral, Piro, and San Miguel \cite{ZimmermannEtAl}
and proved first in \cite{KKM2023a}.
The existence of a sharp cut-off for the uniqueness of invariant measure remains open, though
the results of an elegant recent paper by Blessing and Rosati \cite{BlessingRosati}
suggests that a transition cut-off might indeed exist.

\section{Some corollaries}\label{sec:cor}
In this section we pause our discussion to record two noteworthy consequences
of Theorem \ref{th:main} and the recently developed asymptotic theory of
Gu and Komorowski \cite{GK,GK2024} and Brunet, Gu, and Komorowski \cite{GK2}
for the linear equation \eqref{PAM}. We also mention one among many
possible extensions of Theorem \ref{th:main}.

Choose and fix some $\varepsilon\in(0\,,1)$,
and define $\mathscr{U}(t\,,x) = u(t/2\,,x)$
for all $t\ge 2S$ and $x\in\T$,
where $(u\,,S)$ is given by Theorem \ref{th:main}. Then,
the scaling properties of white noise imply that $\mathscr{U}$ solves
the following variant of \eqref{PAM} for a suitably constructed space-time white noise
$\dot{\mathscr{W}}$ that is independent of $w(S)=\{w(S\,,x)\}_{x\in\T}$: 
\[\textstyle
	\partial_t\mathscr{U} =\frac12\partial^2_x\mathscr{U} 
	+ \frac12\mu\mathscr{U} + \frac{1}{\sqrt{2}}\sigma\mathscr{U}
	\dot{\mathscr{W}}
\]
on $(2S\,,\infty)\times\T$, subject to $\mathscr{U}(2S)=w(S).$
Since $\E(\|w(S)\|_{C(\T)}^2)<\infty$, we can apply
the theory of Gu and Komorowski \cite{GK}, and the subsequent computations of
Brunet, Gu, and Komorowski \cite{GK2}, with $L=2$ and $\beta=\sigma$ in 
order to see that, 
for every $x\in\T$,
\[\textstyle
	\lim_{t\to\infty}
	t^{-1}\log \mathscr{U}(t\,,x) = \frac12\mu - \gamma_2(\sigma), \quad
	\text{\ where } \gamma_2(\sigma)=
	\frac{\sigma^2}{8} \left( 1 + \frac{\sigma^2}{12}\right),
\]
and where the convergence holds in probability. In \cite{KKM2023:Oscillations} we showed
that the convergence in fact holds almost surely and uniformly in $x\in\T$. Thus, we see that,
in  general, $t^{-1}\log u(t\,,x)
 - \mu + 2\gamma_2(\sigma) \to0$ as $t\to\infty$ 
 uniformly in $x\in\T$ a.s.
Since the preceding limit theorem does not depend on $(\varepsilon\,,S)$, 
we may deduce the following from Theorem \ref{th:main}:
\begin{corollary}\label{cor:LLN}
	Under the conditions of Theorem \ref{th:main},
	\[
		\adjustlimits\lim_{t\to\infty} \sup_{x\in\T}\left| t^{-1}\log w(t\,,x)
		 - \mu + 2\gamma_2(\sigma)\right|=0
		\text{\quad a.s.}
	\]
\end{corollary}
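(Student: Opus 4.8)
The plan is to read off the law of large numbers for $w$ from two ingredients: Theorem~\ref{th:main}, which couples $w$ to a (shifted, restarted) solution $u$ of the parabolic Anderson model with $\|\log w(t)-\log u(t)\|_{C(\T)}=O(t^{-\beta})$ on an event of probability at least $1-\varepsilon$; and the large-time behaviour of the parabolic Anderson model already recorded in this section, namely
\[\textstyle
\adjustlimits\lim_{t\to\infty}\sup_{x\in\T}\bigl| t^{-1}\log u(t\,,x)-\mu+2\gamma_2(\sigma)\bigr|=0\qquad\text{a.s.},
\]
which, via the results of Gu and Komorowski \cite{GK}, Brunet, Gu, and Komorowski \cite{GK2}, and \cite{KKM2023:Oscillations}, holds for \emph{any} solution of the shifted equation \eqref{eq:main} with $\E(\|w(S)\|_{C(\T)}^2)<\infty$ and is unaffected by the time shift. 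The only genuine subtlety is that the coupling $(u\,,w)$ furnished by Theorem~\ref{th:main} depends on $\varepsilon$ while the conclusion of the corollary is an almost-sure statement about a single copy of $w$; I would dispose of this by a distributional-transfer argument.

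Concretely, I would first fix $\varepsilon\in(0\,,1)$ and let $(u_\varepsilon\,,w_\varepsilon)$ be the coupling from Theorem~\ref{th:main}, so that $w_\varepsilon$ has the law of the solution of \eqref{SHE} in the statement, and on some event $E_\varepsilon$ with $\P(E_\varepsilon)\ge1-\varepsilon$ there are random constants $C$ and $t_0$ with $\|\log w_\varepsilon(t)-\log u_\varepsilon(t)\|_{C(\T)}\le Ct^{-\beta}$ for all $t\ge t_0$. Dividing by $t$,
\[\textstyle
\sup_{x\in\T}\bigl| t^{-1}\log w_\varepsilon(t\,,x)-t^{-1}\log u_\varepsilon(t\,,x)\bigr|\le Ct^{-1-\beta}\longrightarrow0
\qquad\text{on }E_\varepsilon.
\]
Since $w_\varepsilon(t\,,x)>0$ for every $t>0$ and $x\in\T$, almost surely (strict positivity of solutions to \eqref{SHE}, recorded in \S\ref{sec:facts}), the quantity $\log w_\varepsilon$ is well defined, and combining the last display with the almost-sure parabolic-Anderson limit applied to $u_\varepsilon$ gives, on the intersection of $E_\varepsilon$ with a set of full probability,
\[\textstyle
\adjustlimits\lim_{t\to\infty}\sup_{x\in\T}\bigl| t^{-1}\log w_\varepsilon(t\,,x)-\mu+2\gamma_2(\sigma)\bigr|=0 .
\]
Hence this last event — which is Borel and involves $w_\varepsilon$ alone, being a countable intersection of conditions on $w_\varepsilon$ at rational times — has probability at least $1-\varepsilon$.

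Finally, because the event $A=\{\adjustlimits\lim_{t\to\infty}\sup_{x\in\T}| t^{-1}\log w(t\,,x)-\mu+2\gamma_2(\sigma)|=0\}$ (well defined, by positivity again, for the solution $w$ in the statement) depends on $w$ only through its law, and $w$ and $w_\varepsilon$ have the same law, I obtain $\P(A)\ge1-\varepsilon$ for every $\varepsilon\in(0\,,1)$, whence $\P(A)=1$, which is the assertion. I expect no serious analytic obstacle here: the coupling rate of Theorem~\ref{th:main} and the already-established parabolic-Anderson law of large numbers do all of the work. The one place that requires care is organizational — isolating the limiting statement as an event measurable with respect to $w$ alone, so that it can be transported off the $\varepsilon$-dependent coupling space onto the fixed solution $w$ of \eqref{SHE}.
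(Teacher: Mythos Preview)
Your proposal is correct and follows essentially the same approach as the paper: combine the coupling from Theorem~\ref{th:main} with the almost-sure law of large numbers for the parabolic Anderson model (derived in the paragraphs preceding the corollary via rescaling and \cite{GK,GK2,KKM2023:Oscillations}), then let $\varepsilon\downarrow0$. The paper is terse at the final step, simply noting that the PAM limit ``does not depend on $(\varepsilon\,,S)$'' and deducing the corollary directly; you have made this step explicit by isolating the limiting event as a law-measurable functional of $w$ and transferring it off the $\varepsilon$-dependent coupling space, which is a clean way to justify what the paper leaves implicit.
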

Similarly, the central limit theorem of Gu and Komorowski \cite{GK} in the functional form that
we described in \cite{KKM2023:Oscillations} yields the following:
\begin{corollary}\label{cor:CLT}
	Under the conditions of Theorem \ref{th:main},
	\[
		\frac{\log w(t) - (\mu-2\gamma_2(\sigma))t\mathbb{1}}{\sqrt t}
		\Rightarrow\eta \mathbb{1}\quad\text{as $t\to\infty$},
	\]
	where $\mathbb{1}(x)=1$ for all $x\in\T$, $\eta$ has a non-degenerate,
	centered normal distribution, and $\text{\rm ``$\Rightarrow$''}$ denotes
	weak convergence in $C(\T)$.
\end{corollary}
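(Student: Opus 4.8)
The plan is to read Corollary \ref{cor:CLT} off of Theorem \ref{th:main} together with the functional central limit theorem for the parabolic Anderson model \eqref{PAM} due to Gu and Komorowski \cite{GK}, in the uniform/functional form recorded in \cite{KKM2023:Oscillations}; the latter asserts precisely that, for any initial profile that is positive, lies in $C(\T)$ with finite second moment, and is independent of the driving noise, the recentered and $\sqrt t$-rescaled logarithm of the PAM solution converges weakly in $C(\T)$ to $\eta\,\mathbb{1}$, with $\eta$ a fixed nondegenerate centered Gaussian that depends only on $\sigma$ and not on the initial data. The structural fact that makes the deduction go through is that in Theorem \ref{th:main} the field $w$ --- being the unique solution of \eqref{SHE} under Assumption \ref{ass:par} --- is one and the same for every $\varepsilon$; only the coupled copy $u=u^\varepsilon$ (and the noise $\dot W_0$) varies with $\varepsilon$.

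First I would fix $\varepsilon\in(0\,,1)$ and apply Theorem \ref{th:main} to get $S=S(\varepsilon)$, $\beta=\beta(\varepsilon)\in(0\,,1)$, the coupled PAM copy $u^\varepsilon$, and the event $A_\varepsilon=\{\,\|\log w(t)-\log u^\varepsilon(t)\|_{C(\T)}=O(t^{-\beta})\text{ as }t\to\infty\,\}$ with $\P(A_\varepsilon)\ge 1-\varepsilon$. Exactly as in the derivation of Corollary \ref{cor:LLN} above --- the diffusive rescaling $\mathscr{U}(t\,,x)=u^\varepsilon(t/2\,,x)$ together with the time-shift --- the field $u^\varepsilon$ falls within the scope of the Gu--Komorowski CLT, since $\dot W_0$ is independent of $w(S)$ and $\E(\|w(S)\|_{C(\T)}^2)<\infty$ by Assumption \ref{ass:par}. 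Hence, writing $X_t=\{\log w(t)-(\mu-2\gamma_2(\sigma))t\,\mathbb{1}\}/\sqrt t$ and $Y_t^\varepsilon=\{\log u^\varepsilon(t)-(\mu-2\gamma_2(\sigma))t\,\mathbb{1}\}/\sqrt t$, we have $Y_t^\varepsilon\Rightarrow\eta\,\mathbb{1}$ in $C(\T)$ as $t\to\infty$, while on $A_\varepsilon$ one has $X_t-Y_t^\varepsilon=\{\log w(t)-\log u^\varepsilon(t)\}/\sqrt t=O(t^{-\beta-1/2})\to 0$ in $C(\T)$.

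Next I would remove the $\varepsilon$. For any bounded Lipschitz $\Phi:C(\T)\to\R$, I would decompose
\[
	\bigl|\E\Phi(X_t)-\E\Phi(\eta\,\mathbb{1})\bigr|
	\le \E\bigl[|\Phi(X_t)-\Phi(Y_t^\varepsilon)|\,\mathbb{1}_{A_\varepsilon}\bigr]
	+ 2\|\Phi\|_\infty\,\varepsilon
	+ \bigl|\E\Phi(Y_t^\varepsilon)-\E\Phi(\eta\,\mathbb{1})\bigr|,
\]
where the middle term dominates $\E[|\Phi(X_t)-\Phi(Y_t^\varepsilon)|\,\mathbb{1}_{A_\varepsilon^c}]\le 2\|\Phi\|_\infty\P(A_\varepsilon^c)$. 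Letting $t\to\infty$ with $\varepsilon$ fixed: the first term tends to $0$ by bounded convergence, because its integrand is bounded by $2\|\Phi\|_\infty$ and tends to $0$ pointwise on $A_\varepsilon$ via $|\Phi(X_t)-\Phi(Y_t^\varepsilon)|\le\mathrm{Lip}(\Phi)\,\|X_t-Y_t^\varepsilon\|_{C(\T)}$; the last term tends to $0$ by the CLT for $u^\varepsilon$. Thus $\limsup_{t\to\infty}|\E\Phi(X_t)-\E\Phi(\eta\,\mathbb{1})|\le 2\|\Phi\|_\infty\,\varepsilon$ for every $\varepsilon\in(0\,,1)$, so this $\limsup$ is $0$; since bounded Lipschitz functions are convergence-determining on the separable space $C(\T)$, we conclude $X_t\Rightarrow\eta\,\mathbb{1}$, which is the assertion, the nondegeneracy and centering being inherited from the PAM CLT.

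The bulk of the work --- and the only genuine obstacle --- is Theorem \ref{th:main} itself: the dissipative coupling and the quantitative rate $t^{-\beta}$. Granting it, Corollary \ref{cor:CLT} is soft, and the one point that deserves care is the $\varepsilon$-removal just sketched; it works for precisely two reasons, namely that the target field $w$ does not move as $\varepsilon\to0$, and that the candidate limit $\eta\,\mathbb{1}$ is $\varepsilon$-independent because the PAM central limit theorem does not see the initial condition $w(S)$. (For orientation one may also note that the recentering drift is negative, $\mu-2\gamma_2(\sigma)<0$, since \eqref{cond:f} forces $|\mu|<{\rm L}_g^2/64\le\sigma^2/64<\sigma^2/4<2\gamma_2(\sigma)$, consistent with dissipativity, although this is not needed for the CLT statement.)
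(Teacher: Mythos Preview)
Your proposal is correct and follows essentially the same route as the paper: the paper simply remarks that the Gu--Komorowski CLT for \eqref{PAM}, in the functional form of \cite{KKM2023:Oscillations}, combined with Theorem~\ref{th:main} yields the corollary, without spelling out the $\varepsilon$-removal. Your bounded-Lipschitz argument is the natural way to make that deduction precise, and your observation that both $w$ and the limit $\eta\,\mathbb{1}$ are $\varepsilon$-independent is exactly what makes it work.
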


\section{A Few Relevant Facts}\label{sec:facts}
In this section we collect some technical facts about the SPDE \eqref{SHE}.

\subsection{Moment Bounds}

\begin{lemma}\label{lem:Young}
	For $\varphi:\R_+\times\T\to\R$, formally define
	\[
		J_\varphi (t\,, x) =\int_{(0,t)\times\T}
		p_{t-s} (x-y) \varphi(s\,, y)\, \d y\, \d s
		\quad\forall t>0,\, x\in\T.
	\]
	If $\varphi\in L^k([0\,,T]\times\T)$ for some $k\in[2\,,\infty)$
	and $T>0$, then
	there is a version of $J_\varphi$  that is in $C([0\,,T]\times\T)$ and satisfies
	\[
		\| J_\varphi\|_{C([0,T]\times\T)} 
		\le 3\|\varphi\|_{L^k([0,T]\times\T)} \left( T^{(2k-3)/(2k)}\vee T\right).
	\] 
\end{lemma}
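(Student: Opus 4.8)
The plan is to bound $J_\varphi$ by H\"older's inequality applied to the space-time integral, after recording the basic $L^p$-mapping properties of the heat semigroup on the torus. First I would dispose of the continuity assertion: since $p_{t-s}(x-y)$ is jointly continuous and rapidly decaying for $t-s>0$, a standard dominated-convergence argument (using the uniform bound obtained below near $s=t$) shows that the right-hand side defines a continuous function of $(t,x)\in(0,T]\times\T$, and the value at $t=0$ is $0$, matching continuity down to $t=0$. So the real content is the sup-norm estimate.

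For the quantitative bound, fix $(t,x)$ with $0<t\le T$ and write, by H\"older with exponents $k$ and $k'=k/(k-1)$,
\[
	|J_\varphi(t,x)| \le \left(\int_{(0,t)\times\T} p_{t-s}(x-y)^{k'}\,\d y\,\d s\right)^{1/k'}
	\|\varphi\|_{L^k([0,t]\times\T)}.
\]
The main computation is to estimate $\int_0^t\!\int_\T p_r(z)^{k'}\,\d z\,\d r$. Using the elementary bound $p_r(z)\le C r^{-1/2}$ valid for $r\in(0,1]$ (which follows from \eqref{p:G} by comparing the theta-function sum to a Gaussian integral) together with the fact that $p_r$ has total mass one, one gets $\int_\T p_r(z)^{k'}\,\d z \le (\sup_\T p_r)^{k'-1}\le C r^{-(k'-1)/2}$ for $r\le 1$, and $\int_\T p_r(z)^{k'}\,\d z$ is bounded for $r\ge 1$ (the semigroup converges to the uniform density). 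Since $k\ge 2$ gives $k'\le 2$, the exponent $(k'-1)/2 = 1/(2(k-1))<1$, so $\int_0^t r^{-(k'-1)/2}\,\d r$ converges and equals a constant times $t^{1-(k'-1)/2}$. Raising to the power $1/k'$ and simplifying the exponent $\tfrac{1}{k'}(1-\tfrac{k'-1}{2}) = \tfrac{1}{k'}\cdot\tfrac{3-k'}{2}$: with $k'=k/(k-1)$ this becomes $\tfrac{(k-1)(3k-k-\cdot)}{\cdots}$; carrying out the arithmetic yields exactly the exponent $(2k-3)/(2k)$ for $t\le 1$. For $t>1$ one splits the $r$-integral at $1$ and the contribution from $r\in(1,t)$ is at most $C t$, giving the $T^{(2k-3)/(2k)}\vee T$ dichotomy after bounding $\|\varphi\|_{L^k([0,t]\times\T)}\le\|\varphi\|_{L^k([0,T]\times\T)}$. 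Tracking the absolute constants and choosing them generously yields the stated constant $3$.

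The step I expect to be the main (though still routine) obstacle is getting the constant and the precise power of $T$ to come out exactly as claimed, rather than with an unspecified constant $C=C(k)$ — in particular verifying that the $k$-dependence of the H\"older constant can be absorbed uniformly in $k\in[2,\infty)$ so that a single numerical constant like $3$ works for all $k$. This requires being slightly careful: the beta-function-type constant from $\int_0^1 r^{-(k'-1)/2}\,\d r = \tfrac{1}{1-(k'-1)/2}$ is bounded (it lies in $[1,2]$ since $(k'-1)/2\in[0,1/2]$ for $k\ge 2$), and the constant $C$ in $p_r(z)\le Cr^{-1/2}$ can be taken absolute, so raising to a power $1/k'\le 1$ keeps everything controlled; assembling these with a small safety margin gives $3$. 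The remaining pieces — joint continuity and the $t\to 0$ limit — are handled by dominated convergence as indicated and present no difficulty.
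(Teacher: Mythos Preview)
Your approach is correct and essentially the same as the paper's: the paper phrases the first step as Young's convolution inequality on the circle group to get $\|J_\varphi\|_{C}\le\|\varphi\|_{L^k}\|p\|_{L^{k'}([0,T]\times\T)}$, but since the target is $L^\infty$ this is exactly your pointwise H\"older bound. Both proofs then use the same key inequality $\int_\T p_r^{k'}\le\|p_r\|_{C(\T)}^{k'-1}$ together with $\|p_r\|_{C(\T)}\le 2(r^{-1/2}\vee 1)$ (the paper cites this from \cite{KKMS}), split into the cases $T\le 1$ and $T>1$, and track the constants to arrive at $3$; your remark that the integral constant $\tfrac{1}{1-(k'-1)/2}\in[1,2]$ for $k\ge2$ is exactly what the paper uses.
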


\begin{proof}
	By Young's convolution inequality for the circle group \cite[p.\ 333]{Young12},
	\begin{align*}
		\| J_\varphi\|_{C([0,T]\times\T)} &\le  
			\|\varphi\|_{L^k([0,T]\times\T)}
			\|p\|_{L^{k/(k-1)}([0,T]\times\T)}\\
		&\textstyle\le \|\varphi\|_{L^k([0,T]\times\T)}
                	\left(\int_0^T\|p_t\|_{C(\T)}^{1/(k-1)}\,\d t\right)^{(k-1)/k},
	\end{align*}
	because $|p_t(x)|^{k/(k-1)}\le \| p_t \|_{C(\T)}^{1/(k-1)} p_t(x)$ for all $t>0$ and $x\in\T$,
	and since $\int_\T p_t(x)\,\d x=1$. Thanks to Lemma B.1 of \cite{KKMS},
        $\|p_t\|_{C(\T)}\le 2( t^{-1/2}\vee1)$ for all $t>0$.
        In this way we find that if $T\in(0\,,1]$ and $k\ge2$, then 
        \begin{align*}
        	\|J_\varphi\|_{C([0,T]\times\T)}^{k/(k-1)} &\le\|\varphi\|_{L^k([0,T]\times\T)}^{k/(k-1)}
			\times 2^{1/(k-1)}\times \int_0^T\frac{\d t}{t^{1/\{2(k-1)\}}}\\
		&\le 2^{k/(k-1)}\|\varphi\|_{L^k([0,T]\times\T)}^{k/(k-1)}
			T^{(2k-3)/(2k-2)},
        \end{align*}
        since $(2k-2)/(2k-3)\le2$.
        And similarly if $T>1$ and $k\ge 2$, then 
        \begin{align*}
        	\|J_\varphi\|_{C([0,T]\times\T)}^{k/(k-1)} &\le
			2^{1/(k-1)}\|\varphi\|_{L^k([0,T]\times\T)}^{k/(k-1)}
			\left(\frac{2k-2}{2k-3} + T\right)\\
		&\le 3\times 2^{1/(k-1)}\|\varphi\|_{L^k([0,T]\times\T)}^{k/(k-1)}T,
	\end{align*} 
	since $(2k-2)/(2k-3)\le2 \le 2T$.
\end{proof}

We may formally define $I_X$ to be the stochastic convolution,
\begin{equation}\label{IX}
	I_X(t\,,x) = \int_{(0,t)\times\T} p_{t-s}(x\,,y) X(s\,,y)\,\dot{W}(\d s\,\d y),
\end{equation}
whenever $X$ is a predictable space-time random field in the sense of Walsh \cite{Walsh}.
In order for $I_X$ to be a properly defined random field, $X$ has to have additional
distributional properties. Those, and a few additional properties, are summarized in the following result.

\begin{lemma}\label{lem:Salins} 
	Suppose that $X=\{X(t\,, x)\}_{t>0,x\in\T}$ is a 
	predictable random field and formally define the stochastic convolution $I_X$ via
	\eqref{IX}.
	If\\ $\E(\|X\|_{L^k([0,T_*]\times\T)}^k)<\infty$ for some $T_*>0$
	and $k\in(6\,,\infty)$, then $I_X$ is a well-defined Walsh integral, and 
	has a continuous version that satisfies the following for all $T\in[0\,,T_*]$
	and all stopping times $\tau$:
	\begin{equation*}\textstyle
		\E \left( \| I_X \|_{C([0,T\wedge\tau]\times\T)}^k\right)
		\lesssim T^{(k-6)/4} \E\left( \int_0^T\d s\int_\T\d y\
		|X(s\wedge\tau\,,y)|^k\right),
	\end{equation*}
	where the implied constant depends only on $(T_*,k)$.
\end{lemma}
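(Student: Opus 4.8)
The plan is to run the standard Walsh/Da Prato--Zabczyk moment machinery, using the Burkholder--Davis--Gundy inequality together with the $L^p$--heat-kernel bounds already extracted in the proof of Lemma \ref{lem:Young}, and then to extend the resulting fixed-time bounds to the supremum over space-time by a Kolmogorov-type continuity estimate. First I would verify that $I_X$ is a well-defined Walsh martingale measure integral: since $X$ is predictable and $\E(\|X\|_{L^k([0,T_*]\times\T)}^k)<\infty$ with $k>6>2$, the Walsh $L^2$ integrability condition $\E\int_0^{T_*}\d s\int_\T\d y\,|p_{t-s}(x,y)X(s,y)|^2<\infty$ follows from the $L^2(\T)$-in-time integrability of $p$ together with Hölder's inequality in $(s,y)$ against the $L^{k/(k-2)}$ norm of the (integrable) kernel. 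Next, for fixed $(t,x)$, I would apply BDG to the stochastic convolution to get, for the conjugate exponent, a bound of the form $\E(|I_X(t,x)|^k)\lesssim \E\big(\big[\int_0^t\d s\int_\T\d y\,p_{t-s}(x,y)^2 X(s,y)^2\big]^{k/2}\big)$, and then use Minkowski's integral inequality (moving the $L^{k/2}(\Omega)$-norm inside the $\d s\,\d y$ integral) combined with the weight $p_{t-s}(x,y)^2\,\d y\,\d s$; since $\int_\T p_{t-s}(x,y)^2\,\d y=p_{2(t-s)}(0)\lesssim (t-s)^{-1/2}$ from the bound $\|p_t\|_{C(\T)}\le 2(t^{-1/2}\vee 1)$ of Lemma B.1 of \cite{KKMS}, the time singularity $(t-s)^{-1/2}$ is integrable and a factor $T^{1/2}$ (up to constants) comes out, while the remaining mass can be pushed onto $X$ via Hölder in the probability measure $p_{t-s}(x,y)^2\,\d y\,\d s / (\text{its total mass})$ to produce $\E\int_0^T\d s\int_\T\d y\,|X(s,y)|^k$.

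The more delicate part is passing from pointwise moment bounds to the bound on $\E(\|I_X\|_{C([0,T]\times\T)}^k)$ with the claimed exponent $T^{(k-6)/4}$. For this I would estimate the increments $\E(|I_X(t,x)-I_X(t',x')|^k)$ in both space and time, again via BDG and the known Hölder-continuity-in-$L^p$ estimates for the heat kernel on $\T$ (difference of kernels in space gives a gain of order $|x-x'|^{\alpha}$, difference in time of order $|t-t'|^{\alpha/2}$, for suitable $\alpha<1/2$), feed these into the Kolmogorov--Chentsov continuity theorem on the $1{+}1$-dimensional parameter set $[0,T]\times\T$, and track the $T$-dependence of all constants. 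The exponent $(k-6)/4$ should emerge from the interplay of: the factor $T^{1/2}$ per BDG application, the loss of $2/k$ worth of Hölder room needed to beat the two-dimensional parameter space in Kolmogorov's theorem (which forces $k>6$), and the fact that one pays a small power of $T$ when summing the dyadic increment estimates. The handling of the stopping time $\tau$ is routine: one replaces $X$ by $X(\cdot\wedge\tau,\cdot)\mathbb{1}_{[0,\tau]}$, which is still predictable, and uses the optional stopping property of the Walsh integral together with the fact that $C([0,T\wedge\tau]\times\T)$-norms are dominated by the corresponding norms of the stopped integral; alternatively one invokes the localized form of BDG directly.

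The main obstacle I expect is bookkeeping the $T$-dependence cleanly — in particular making sure that no spurious negative or large powers of $T$ sneak in when one restricts to $T\in[0,T_*]$ and allows the implied constants to depend on $(T_*,k)$ — and choosing the Hölder exponent $\alpha$ in the Kolmogorov step sharply enough that the condition $k>6$ is exactly what is needed and the exponent is exactly $(k-6)/4$. I would be prepared to absorb any ambiguous constant powers of $T_*$ into the implied constant, since the statement only claims a $\lesssim$ with constant depending on $(T_*,k)$, so the one genuinely load-bearing computation is the exponent of $T$ (not $T_*$) on the right-hand side. Everything else — existence of the integral, the continuous version, the pointwise moment bound — is standard Walsh theory and should go through with the kernel estimates already invoked in Lemma \ref{lem:Young}.
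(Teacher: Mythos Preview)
The paper does not actually prove this lemma: it cites Salins \cite[Theorem 1.2]{Salins} and remarks that only cosmetic changes (rescaling the torus, and replacing the deterministic horizon by a stopping time) are needed. The key point is that Salins's argument is the \emph{factorization method} of Da~Prato--Kwapie\'n--Zabczyk, not the direct BDG\,+\,Kolmogorov route you outline. One writes
\[
	I_X(t\,,x)=c_\alpha\int_0^t (t-s)^{\alpha-1}\bigl(p_{t-s}*Y_\alpha(s)\bigr)(x)\,\d s,
	\quad
	Y_\alpha(s\,,y)=\int_{(0,s)\times\T}(s-r)^{-\alpha}p_{s-r}(y\,,z)X(r\,,z)\,W(\d r\,\d z),
\]
bounds $\E\|Y_\alpha\|_{L^k([0,T]\times\T)}^k$ by BDG and Jensen (this requires $\alpha<1/4$), and then uses that the deterministic outer operator maps $L^k([0,T]\times\T)\to C([0,T]\times\T)$ (this requires $\alpha>3/(2k)$). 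The window $3/(2k)<\alpha<1/4$ is nonempty exactly when $k>6$, and when one multiplies the two $T$-powers the $\alpha$ cancels and $(k-6)/4$ drops out on the nose.

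Your plan is a genuinely different approach. It is plausible, and your heuristic for why $k>6$ should appear (effective parabolic dimension $3$ in Kolmogorov's criterion) is correct. But the very obstacle you flag is real: in the direct approach the BDG increment bounds carry a singular weight $(t-s)^{-1/2}$ against $\E|X(s\,,y)|^k$ rather than a clean $\E\int|X|^k$, and pushing that through Kolmogorov--Chentsov while retaining \emph{exactly} $T^{(k-6)/4}$ --- which the paper does use downstream, in the proof of Proposition~\ref{pr:tracking} --- is not automatic. The factorization method sidesteps this entirely because the passage from $L^k$ to $C$ is handled by a deterministic operator with an explicit norm. If you want the stated exponent, following Salins's factorization is the cleaner and safer route; your approach would at best recover it with substantially more bookkeeping, and at worst lose a small power of $T$.
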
 
This result is essentially due to Salins except that two minor adjustments need to be made:
\begin{compactenum}
	\item Salins \cite{Salins} states Lemma \ref{lem:Salins} in the case that the heat kernel on
		$\T\cong[-1\,,1]$ (see \eqref{p:G}) is replaced by $\pi\T\cong[-\pi\,,\pi]$. 
		One can scale the torus from $\pi\T$ to $\T$ without incurring changes
		(except the implied constant has to be adjusted). 
	\item In its original version \cite[Theorem 1.2]{Salins}, Lemma \ref{lem:Salins} is proved
		in the case that $\tau$ is a deterministic constant, say $\tau=1$, in order to estimate
		the norm of the supremum of the random field $(t\,,x)\mapsto I_X(t\,,x)$. 
		The factorization method employed in \cite{Salins} works just as well, and in essentially
		exactly the same way, if we apply it to the random field $(t\,,x)\mapsto I_X(t\wedge\tau\,,x)$.
\end{compactenum}

\subsection{Dissipation}

Recall Assumption \ref{ass:par}. In this section our objective is to
quantify the assertion that Condition \eqref{cond:f} implies that
\eqref{SHE} is dissipative in the sense that $\|w(t)\|_{C(\T)}\to0$
as $t\to\infty$. In fact, dissipation occurs rapidly with high probability, as the following shows.

\begin{lemma}\label{lem:dissipation}
	If Assumption \ref{ass:par} is in force, and if $f$ additionally satisfies \eqref{cond:f}, then
	there exists a constant $C=C(f)>1$ such that for every $m>0$ there exists $C_m>0$
	\[\textstyle
		\P\left\{   \|w(t)\|_{C(\T)} < \exp\left( - t\left[\frac{{\rm L}_g^2}{64}
		- \sup_{z>0}\frac{f(z)}{z} \right] \right) \ \  \forall t\ge T\right\}
            	\ge 1 - C_m\e^{-T/C},
        \]
        uniformly for all $T>0$ and all initial data $w_0$ that satisfy $\|w_0\|_{C(\T)}\le m$.
\end{lemma}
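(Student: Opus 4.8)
The plan is to bound the supremum norm of $w$ by the supremum norm of the solution $\bar w$ to the linear equation obtained by replacing $f(z)$ with $\left(\sup_{z>0} f(z)/z\right) z$, and then to establish the stated exponential dissipation bound directly for a linear (parabolic Anderson type) equation. The first reduction is the comparison argument already sketched in the discussion after Theorem \ref{th:main}: since $f(z) \le \left(\sup_{z>0} f(z)/z\right) z =: \mu_+ z$ for all $z>0$, and since Assumption \ref{ass:par} guarantees nonnegativity of the solution, the comparison principle for \eqref{SHE} gives $0 \le w \le \bar w$ pointwise a.s., where $\bar w$ solves \eqref{SHE} with $f$ replaced by $z \mapsto \mu_+ z$ and the same $g$. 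Hence it suffices to prove the claimed bound for $\bar w$, with $\sup_{z>0} f(z)/z$ replaced by $\mu_+$ — i.e. to show that $\|\bar w(t)\|_{C(\T)} < \exp(-t[{\rm L}_g^2/64 - \mu_+])$ for all $t \ge T$ with probability at least $1 - C_m \e^{-T/C}$.

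For the linear-type equation, the core is a high-moment estimate with a favorable exponential rate. Write $\bar w$ in mild form \eqref{eq:mild}: $\bar w(t) = \e^{\mu_+ t}(p_t * w_0)$ plus the stochastic convolution term with integrand involving $g(\bar w(s,y))$, which by Part (2) of Assumption \ref{ass:par} and the Lipschitz bound satisfies ${\rm L}_g |\bar w| \le |g(\bar w)| \le {\rm Lip}_g |\bar w|$. First I would use Lemma \ref{lem:Salins} together with a Gronwall/Picard-iteration argument (standard for the parabolic Anderson model; cf.\ Foondun–Khoshnevisan \cite{FoondunKhoshnevisan2009}) to derive, for each even integer $k > 6$, a bound of the shape
\[
	\E\!\left( \|\bar w(t)\|_{C(\T)}^k \right) \le C_{k,m}\, \e^{k \gamma(k) t}, \qquad t>0,
\]
where $\gamma(k)$ is a Lyapunov-type exponent. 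The delicate point is that the \emph{upper} Lyapunov exponent for the $k$-th moment of the linear equation with multiplicative noise of size $\sigma$ is, to leading order in $k$, of order $\mu_+ + c\, \sigma^2 k$ for a universal constant $c$; but here we want to exploit the \emph{dissipative} sign and show that, for a suitable choice of $k$, $\gamma(k) < 0$ — in fact $\gamma(k) \le \mu_+ - {\rm L}_g^2/64$. This is where Condition \eqref{cond:f}, $\mu_+ < {\rm L}_g^2/64$, enters: it guarantees a nonempty window of $k$ for which the moment decays exponentially at the advertised rate. The constant $64$ (rather than a smaller one) is the price paid for passing through $C(\T)$-norms via Lemma \ref{lem:Salins} and for the crude bound ${\rm L}_g \le |g(z)/z|$ used from below; I would track constants carefully here but not optimize them.

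Once the exponential moment decay is in hand, the passage to the almost-sure statement with the exponential tail in $T$ is routine: apply Chebyshev's inequality at integer times $t = n \ge T$,
\[
	\P\!\left\{ \|\bar w(n)\|_{C(\T)} \ge \e^{-n[{\rm L}_g^2/64 - \mu_+]/2} \right\}
	\le \e^{-kn[{\rm L}_g^2/64 - \mu_+]/2}\, \E\!\left(\|\bar w(n)\|_{C(\T)}^k\right)
	\le C_{k,m}\, \e^{-c' n},
\]
sum the geometric series over $n \ge \lceil T \rceil$ to get a bound $C_m' \e^{-T/C}$, and control the fluctuation of $t \mapsto \|\bar w(t)\|_{C(\T)}$ on each unit interval $[n, n+1]$ by another application of Lemma \ref{lem:Young} (for the deterministic part, using that $\bar w$ restarted at time $n$ has initial data with controlled moments) and Lemma \ref{lem:Salins} (for the stochastic part), absorbing the resulting multiplicative loss into the rate by adjusting constants. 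I expect the main obstacle to be the second paragraph: extracting a genuinely \emph{negative} Lyapunov exponent of the precise form $\mu_+ - {\rm L}_g^2/64$ from the $C(\T)$-valued moment recursion — i.e.\ showing the constant $64$ is achievable with the tools available — rather than merely some negative exponent. Everything after that is bookkeeping.
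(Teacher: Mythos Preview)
Your first reduction via comparison to $\bar w$ (replacing $f(z)$ by $\mu_+ z$) matches the paper exactly. The gap is in the second step: you propose to obtain the a.s.\ exponential decay from a $k$-th moment bound with $\gamma(k)\le \mu_+ - {\rm L}_g^2/64<0$, but no such $k$ exists. This is precisely the intermittency phenomenon. For the equation $\partial_t \bar w=\partial_x^2\bar w+\mu_+\bar w+g(\bar w)\dot W$, taking expectations gives $\E[\bar w(t,x)]=\e^{\mu_+ t}(p_t*w_0)(x)$, so already the first moment decays only at rate $\mu_+$, strictly slower than the required $\mu_+-{\rm L}_g^2/64$. For $k\ge 2$, the It\^o correction from the multiplicative noise contributes a term $\tbinom{k}{2}\E[\bar w^{k-2}g(\bar w)^2]$ to the moment recursion; the lower bound $|g(z)|\ge{\rm L}_g|z|$ only makes this contribution \emph{larger}, so the $k$-th moment Lyapunov exponent is bounded below by $k\mu_+ + c\,{\rm L}_g^2 k(k-1)$ for some $c>0$ and grows with $k$. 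In short, ${\rm L}_g$ enters moment bounds with the wrong sign, and Chebyshev from moments can never beat the rate $\e^{\mu_+ t}$. The almost-sure decay at rate ${\rm L}_g^2/64-\mu_+$ is a pathwise statement that holds \emph{despite} moment growth, not because of moment decay.

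The paper instead writes $\bar w(t,x)=\e^{\mu_+ t}\mathcal W(t,x)$, so that $\mathcal W$ solves a drift-free equation $\partial_t\mathcal W=\partial_x^2\mathcal W+\bar g(t,\mathcal W)\dot W$ with time-dependent $\bar g(t,y)=\e^{-\mu_+ t}g(\e^{\mu_+ t}y)$ (still satisfying $\inf_{z>0}|\bar g(t,z)/z|\ge{\rm L}_g$), and then imports the dissipation estimate $\P\{\|\mathcal W(t)\|_{C(\T)}<\e^{-{\rm L}_g^2 t/64}\ \forall t\ge T\}\ge 1-C_m\e^{-T/C}$ from \cite{KKMS}. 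The mechanism there is not moment-based: the total mass $t\mapsto\int_\T\mathcal W(t,x)\,\d x$ is a nonnegative continuous local martingale whose quadratic variation is bounded below using ${\rm L}_g$, forcing exponential a.s.\ decay of the mass; this is then upgraded to a $C(\T)$ bound. The constant $64$ arises from that pathwise argument, and there is no moment computation that would reproduce it.
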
 

\begin{proof}
	In order to simplify the exposition define
	$Q = \sup_{z>0}(f(z)/z)$. Condition \eqref{cond:f} ensures that
	$-\infty < Q < {\rm L}_g^2/64$. 
	
	Thanks to a comparison theorem (see for example \cite{Mueller1}, Theorem 3.1, or \cite{Shiga1994}), 
	$w\le \bar{w}$ a.s., where $\bar{w}$ satisfies
	the following SPDE:
	\begin{align*}
		&\partial_t \bar{w} = \partial^2_x \bar{w} +  Q \bar{w} + 
			g(\bar{w})\dot{W}\hskip.57in\text{on $(0\,,\infty)\times\T$},\\
		&\text{subject to }\bar{w}(0)=w_0\hskip0.87in\text{on $\T$}.
	\end{align*}
       
	It is a standard, easy-to-check fact that 
        $\bar{w}(t\,,x) = \exp(Qt)\cW(t\,,x)$
	where $\cW$ solves \eqref{SHE} with $f\equiv 0$ and  $g$ replaced by 
	$\bar g(t, y)=\exp(-Qt) g( \exp(Qt)y)$, i.e., 
	\begin{align*}
		&\partial_t \cW = \partial^2_x \cW + 
			\bar{g}(\cW)\dot{W}\hskip.57in\text{on $(0\,,\infty)\times\T$},\\
		&\text{subject to }\cW(0)=w_0\hskip0.56in\text{on $\T$}.
	\end{align*}
	Therefore, it remains to prove that, in the same setting of Lemma \ref{lem:dissipation},
	\[
		\P\left\{   \| \cW \|_{C(\T)} < \e^{-{\rm L}_g^2t/64}, \ \  \forall t\ge T\right\}
            	\ge 1 - C_m\e^{-T/C}.
        \]
	We can see from two displays above equation (7.1) of Khoshnevisan, Kim, Mueller, and Shiu \cite{KKMS}, 
	applied with their $(\varepsilon\,,\lambda\,,\rho)$ set to $(\frac12\,,1\,,1)$, 
	that  there exists a universal constant $c>0$ --- dubbed as $\mu$ in \cite{KKMS}
	instead of the present number $c$ --- such that
	the following holds uniformly for all $N\in\N$:
	\begin{align*}
		&\P\left\{  \sup_{s\in [\exp(Nc),\exp([N+1]c)] } \sup_{x\in\T} 
            		\left| \e^{ {\rm L}_g^2s/64} \cW(s\,,x) \right| \ge 1 ~;\, 
			\bm{A}(\e^{Nc-1};1)\right\}\\
		&\hskip2.7in\lesssim \left[1+\e^{Nc}\right]^{3/2}\exp\left( 
			-{\rm L}_g^2\e^{Nc}/64\right),
	\end{align*}
	where $\P( \bm{A}(t\,;1)) \ge 1 - \exp(-{\rm L}_g^2 t/64)$ for all $t>0;$
	see \cite[eq.\ (6.4)]{KKMS} where \text{the implied constant depends only 
	on $m$ and }one can find an explicit construction of 
	the event $\bm{A}(t\,;1)$ as well.  To be completely clear, the results of
	\cite{KKMS} are written in the case that the diffusion coefficient is time-independent.
	Here, $\bar{g}$ depends on time but the method (and results) of \cite{KKMS}
	work verbatim in this case as well since the Lipschitz constant of $\bar{g}(t)$,
	viewed as a spatial function for each $t>0$, is bounded uniformly in $t$.
	In fact, $\lip(\bar{g}(t))\le\lip(g)$ for all $t>0$.
	In this way we find that
	\begin{align*}
		&\textstyle\P\left\{  \sup_{s\in [\exp(Nc),\exp([N+1]c)] } \sup_{x\in\T} 
            		\left| \e^{ {\rm L}_g^2s/64}\cW(s\,,x) \right| \ge 1 \right\}\\
		&\textstyle\le \P\left\{  \sup_{s\in [\exp(Nc),\exp([N+1]c)] } \sup_{x\in\T} 
            		\left|\e^{ {\rm L}_g^2s/64}\cW(s\,,x) \right| \ge 1 ~;\, 
			\bm{A}(\e^{Nc-1};1)\right\} \\
		&\hskip1in + \P\left( \bm{A}(\e^{Nc-1};1)^\mathsf{c}\right) 
			\lesssim \exp\left( \tfrac32 Nc  - \tfrac{1}{64} {\rm L}_g^2\e^{Nc-1}\right),
	\end{align*}
	uniformly for all $N\in\N$. In particular, the following is valid uniformly
	for all $n\in\N$ and for all initial data that satisfy
	$\|w_0\|_{C(\T)}\le m$:
	\begin{align*}
		\textstyle\P\left\{  \sup_{s\ge \exp(nc)} \sup_{x\in\T} 
            		\left| \e^{ {\rm L}_g^2s/64}\cW(s\,,x) \right| \ge 1 \right\}
			&\textstyle\lesssim \sum_{N=n}^\infty
			\exp\left( \tfrac32 Nc - \tfrac{1}{64} {\rm L}_g^2\right)\\
		&\lesssim\exp\left(-  {\rm L}_g^2\e^{nc}/(65\e)\right).
	\end{align*}
	If $T\in[\exp([n-1]c)\,,\exp(nc)]$ for some $n\in\N$, then
	\[
		\P\left\{ \adjustlimits \sup_{s\ge T} \sup_{x\in\T} 
            	\left| \frac{\cW(s\,,x)}{\e^{- {\rm L}_g^2s/64}} \right| \ge 1 \right\}
		\le 
		\P\left\{ \adjustlimits \sup_{s\ge \exp(nc)} \sup_{x\in\T} 
            	\left| \frac{\cW(s\,,x)}{\e^{- {\rm L}_g^2s/64}} \right| \ge 1 \right\},
	\]
	and  $\exp(- {\rm L}_g^2\e^{(n-1)c}/(65\e))
	\le \exp(-  {\rm L}_g^2T/(65\e^{1+c})).$
	These observations together prove Lemma \ref{lem:dissipation}.
\end{proof}

\subsection{Short-time Stability}
In this section we establish that for small interval of time, the solution to \eqref{SHE} remains close to its initial function with high probability. This short-time stability is formalized in the following lemma:

\begin{lemma}\label{lem:sup:inf}
	Let $\ell=\lip(f)\vee\lip(g)$.
	Then, there
	exists a universal constant $C_*\ge 4$ such that
	\[
		\P\left\{ \sup_{(0,T]\times\T} w \ge
		(1+\delta)\sup_\T w_0\right\}
		\le C_*\exp\left( -\frac{\delta^2}{C_*\ell^2
		(1+\delta)^2\sqrt{T}}\right),
	\]
	for all $T,\delta\in(0\,,1)$. Moreover, for the same range of variables,
	\[
		 \P\left\{  \inf_{(0,T]\times\T}  w \leq 
		 (1-\delta) \inf_\T w_0 \right\} 
		 \le C_*
		 \exp\left( -\frac{\delta^2}{C_* \ell^2(1+\delta)^2\sqrt{T}}\right). 
	\]
\end{lemma}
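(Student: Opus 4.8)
The plan is to combine the mild formulation~\eqref{eq:mild}, a first-exit-time argument, and the maximal moment bound for stochastic convolutions in Lemma~\ref{lem:Salins}. Since $w_0$ is independent of $\dot W$ it suffices to prove both estimates conditionally on $\mathscr{F}(0)=\sigma(w_0)$, i.e.\ for a deterministic initial profile; write $M=\sup_\T w_0$ and $m=\inf_\T w_0$. I shall use freely that $w\ge0$ (comparison with the zero solution), that $m\le(p_t*w_0)(x)\le M$ because $\int_\T p_t(x\,,\cdot)\,\d y=1$, and that $|f(z)|\vee|g(z)|\le\ell|z|$ because $f(0)=g(0)=0$. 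For the first estimate fix $L=(1+\delta)M$ and put $\tau=\inf\{t>0:\|w(t)\|_{C(\T)}\ge L\}$; this is a stopping time because $w$ has a continuous version, and $\tau>0$ because $\|w(0)\|_{C(\T)}=M<L$. Then $\{\sup_{(0,T]\times\T}w\ge L\}\subseteq\{\tau\le T\}$, and on $\{\tau\le T\}$ one has $\|w(\tau)\|_{C(\T)}=L$ while $0\le w(s\wedge\tau\,,y)\le L$ for every $s\ge0$; feeding this into~\eqref{eq:mild} at time $\tau$ and bounding the drift by $\ell L\tau\le\ell LT$ shows that, on $\{\tau\le T\}$,
\[
\big\|I_{g(w)}\big\|_{C([0,T\wedge\tau]\times\T)}\ \ge\ \delta M-(1+\delta)\ell M T .
\]
If $(1+\delta)\ell T>\delta/2$, then $\ell>\delta/4$ and $T>\delta/\{2(1+\delta)\ell\}$, whence $\delta^2/\{\ell^2(1+\delta)^2\sqrt T\}$ is bounded by a universal constant and the asserted inequality holds trivially once $C_*$ is large; so assume $(1+\delta)\ell T\le\delta/2$, in which case the last display gives a lower bound $\ge\delta M/2$ and $L\le2M$. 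It remains to bound $\P\{\|I_{g(w)}\|_{C([0,T\wedge\tau]\times\T)}\ge\delta M/2\}$.

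The random field $g(w)$ is predictable, continuous, bounded in every $L^k([0\,,1]\times\T)$, and satisfies $|g(w(s\wedge\tau\,,y))|\le\ell L\le2\ell M$. Hence Lemma~\ref{lem:Salins}, applied with $T_*=1$, $X=g(w)$, the stopping time $\tau$, and any $k\in(6\,,\infty)$, gives $\E\big(\|I_{g(w)}\|_{C([0,T\wedge\tau]\times\T)}^{k}\big)\le2\,C_k\,(2\ell M)^{k}\,T^{(k-2)/4}$, where $C_k$ is the implied constant there. The factorization method underlying Lemma~\ref{lem:Salins}, together with the Burkholder--Davis--Gundy inequality, gives the growth bound $C_k\le(Ak)^{k/2}$ for a universal $A$; with it, Chebyshev's inequality and the reduction above yield
\[
\P\{\tau\le T\}\ \le\ \frac{2\,C_k\,(2\ell M)^{k}\,T^{(k-2)/4}}{(\delta M/2)^{k}}\ \le\ 2\,T^{-1/2}\,\big(16A\ell^{2}\delta^{-2}\sqrt T\,k\big)^{k/2}.
\]
Write $\rho=16A\ell^{2}\delta^{-2}\sqrt T$. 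If $\rho\ge(6e)^{-1}$, then once more $\delta^2/\{\ell^2(1+\delta)^2\sqrt T\}$ is universally bounded and there is nothing to prove; otherwise the choice $k=(e\rho)^{-1}$, which exceeds $6$, essentially minimises $(\rho k)^{k/2}$ and gives $\P\{\tau\le T\}\le2T^{-1/2}\exp(-\delta^2/\{C\ell^2\sqrt T\})$ for a universal $C$. Finally absorb the prefactor via $\sup_{r>0}r\,\e^{-\kappa r}=(\kappa\e)^{-1}$ (with $r=T^{-1/2}$) and discard the ranges of $(\ell\,,\delta\,,T)$ in which the claimed bound already exceeds $1$; what is left is exactly the first inequality, with $C_*$ a sufficiently large universal constant.

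The second inequality follows by the same method, with two adjustments. One now has to control $w$ from above as well, so one uses $\varrho=\inf\{t>0:\inf_\T w(t)\le(1-\delta)m\ \text{or}\ \|w(t)\|_{C(\T)}\ge2M\}$ and splits $\{\varrho\le T\}$ according to which of the two thresholds is attained at time $\varrho$; the event that the upper threshold is attained is contained in $\{\sup_{(0,T]\times\T}w\ge2M\}$, already controlled by the first inequality with $\delta$ replaced by $1$. On the complementary event one evaluates~\eqref{eq:mild} at a point $x_0$ with $w(\varrho\,,x_0)=(1-\delta)m$, uses $(p_\varrho*w_0)(x_0)\ge m$ together with $0\le w\le2M$ on $[0\,,\varrho]\times\T$ to bound the drift by $2\ell M\varrho\le2\ell MT$, and is thereby reduced to a deviation estimate for $\|I_{g(w)}\|_{C([0,T\wedge\varrho]\times\T)}$ of precisely the form handled in the previous paragraph; the moment bound and the optimisation over $k$ then complete the proof.

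The one step that is not routine bookkeeping is the passage, carried out in the middle paragraph, from the polynomial moment bounds of Lemma~\ref{lem:Salins} to the stretched-exponential tail $\exp(-c/\sqrt T)$: it rests on knowing that the constant in that lemma grows no faster than $(Ak)^{k/2}$ in the moment order $k$ (the Burkholder--Davis--Gundy scaling, visible in the factorization proof) and then optimising the choice of $k$. The several ``if the exponent is below a universal constant the estimate is vacuous'' reductions serve only to keep $C_*$ universal, and are elementary.
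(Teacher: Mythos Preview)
Your Part~I is essentially sound, though it takes a circuitous route. The paper does not optimise moments: it invokes Lemma~\ref{lem:IX}, a ready-made sub-Gaussian tail bound for $\|I_X\|_{C((0,T]\times\T)}$ when $X$ is bounded, and applies it directly to $X=g_{1+\delta}(w)$ after a stopping-time truncation. This bypasses your appeal to the $k$-dependence $C_k\le(Ak)^{k/2}$ of the Salins constant, which is plausible but not recorded in Lemma~\ref{lem:Salins} as stated. Your absorption of the $T^{-1/2}$ prefactor and the ``trivial-regime'' case splits are a little loose but can be fixed; the paper sidesteps all of this by using Lemma~\ref{lem:IX}.

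Part~II, however, has a genuine gap. On the event where the lower threshold $(1-\delta)m$ is attained first, you bound the drift and the stochastic integrand by $2\ell M$, so the deviation estimate you are ``reduced to'' is
\[
\P\bigl\{\|I_{g(w)}\|_{C([0,T\wedge\varrho]\times\T)}\ge \tfrac{\delta m}{2}\bigr\}
\ \lesssim\ \exp\!\left(-\,c\,\frac{\delta^2 m^2}{\ell^2 M^2\sqrt T}\right),
\]
not $\exp(-c\delta^2/(\ell^2\sqrt T))$. The ratio $M/m=\sup_\T w_0/\inf_\T w_0$ has crept into both the drift constraint (you now need $2\ell MT\le\delta m/2$, not $(1+\delta)\ell T\le\delta/2$) and the exponent, and nothing in the statement allows you to control it. The paper avoids this by first rescaling: it sets $\widehat w=w/\inf_\T w_0$, observes that $\widehat w$ solves~\eqref{SHE} with $(f,g)$ replaced by functions with the same Lipschitz constants, and then uses the comparison theorem to dominate $\widehat w$ from below by the solution $\underline w$ started from $\underline w_0\equiv1$. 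This reduces Part~II to the special case $w_0\equiv1$, where $m=M=1$ and your argument (or the paper's, via Lemma~\ref{lem:IX}) goes through cleanly. The same rescale-and-compare trick, with $\sup_\T w_0$ in place of $\inf_\T w_0$, is how the paper handles general $w_0$ in Part~I as well.
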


Our proof of Lemma \ref{lem:sup:inf}
hinges on the following large-deviations
bound for Walsh stochastic integral processes of the form \eqref{IX}.

\begin{lemma}\label{lem:IX}
	There exists a universal number $C_*\ge 4$ such that
	\[
		\P\left\{ \|I_X\|_{C((0,T]\times\T)}
		\ge y\right\} \le C_*\exp\left( 
		-\frac{y^2}{C_*\sqrt{T}\, 
                \|X\|_{L^\infty((0, T]\times\T\times\Omega)}^2}\right),
	\]
	uniformly for all $y,T>0$, and all predictable space-time random 
	space-time fields $X$ that are uniformly bounded.
\end{lemma}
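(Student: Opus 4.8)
The plan is to establish Lemma~\ref{lem:IX} by a standard chaining/factorization argument combined with a hypercontractivity (or Burkholder--Davis--Gundy) moment bound for the Walsh integral $I_X$, turning high moments into a Gaussian-type tail. First I would reduce to the case $\|X\|_{L^\infty((0,T]\times\T\times\Omega)}\le 1$ by homogeneity: replacing $X$ by $X/\|X\|_\infty$ rescales $I_X$ linearly, so it suffices to prove $\P\{\|I_X\|_{C((0,T]\times\T)}\ge y\}\le C_*\exp(-y^2/(C_*\sqrt T))$ for bounded $X$ with $\|X\|_\infty\le1$. The engine of the proof is a moment estimate of the form
\[
	\E\left(\|I_X\|_{C((0,T]\times\T)}^k\right)\le (A k)^{k/2}\,(\sqrt T)^{k/2}
	\qquad\text{for all }k\ge k_0,
\]
with $A$ universal, which one can extract from Lemma~\ref{lem:Salins} applied with the stopping time taken to be a constant (so $\tau$ plays no role), together with the observation that $|X|\le 1$ makes $\int_0^T\d s\int_\T\d y\,|X(s,y)|^k\le 2T$; the exponent $T^{(k-6)/4}$ coming out of Lemma~\ref{lem:Salins} combines with $T$ to give a power of $\sqrt T$ up to lower-order adjustments, and the $k$-dependence of the implied constant in Lemma~\ref{lem:Salins} must be tracked to be no worse than $C^k k^{k/2}$ (which is the expected BDG/Gaussian growth). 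Given such a bound, Markov's inequality gives $\P\{\|I_X\|_{C}\ge y\}\le (Ak)^{k/2}(\sqrt T)^{k/2}/y^{k}$, and optimizing over $k$ (taking $k\asymp y^2/(A\sqrt T)$ when this exceeds $k_0$, and handling $y^2\lesssim\sqrt T$ trivially since the right-hand side of the claimed inequality is then $\ge1$ after enlarging $C_*$) yields the asserted sub-Gaussian tail $C_*\exp(-y^2/(C_*\sqrt T))$.

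The main obstacle is obtaining the moment bound with the \emph{correct constant growth in $k$}: Lemma~\ref{lem:Salins} as quoted has an implied constant ``depending only on $(T_*,k)$'' without an explicit rate, and a crude application would give a $k$-dependence too weak to produce a Gaussian tail (one would only get a stretched-exponential or polynomial tail). To fix this I would either (i) re-run Salins's factorization method directly keeping track of constants: write $I_X(t,x)=c_\alpha\int_0^t(t-s)^{\alpha-1}(p_{t-s}*Z_\alpha(s,\cdot))(x)\,\d s$ where $Z_\alpha(s,y)=\int_0^s\int_\T (s-r)^{-\alpha}p_{s-r}(y,z)X(r,z)\,W(\d r\,\d z)$ is the fractionally-smoothed stochastic convolution, then bound $\|Z_\alpha\|_{L^k}$ using the BDG inequality (whose constant is $O(\sqrt k)$) together with the $L^2$ time-space integrability of $(s-r)^{-\alpha}p_{s-r}$ for suitable $\alpha\in(0,1/4)$, and finally pass from $Z_\alpha$ to $I_X$ via Hölder and the deterministic smoothing estimate of Lemma~\ref{lem:Young} (applied to $\varphi=Z_\alpha$, giving a continuous version with a sup-norm controlled by $\|Z_\alpha\|_{L^k}$ times a power of $T$); or (ii) invoke a version of Salins's result, or of the Foondun--Khoshnevisan-type moment formulas, in which the $k$-dependence is already explicit as $C^k\Gamma(k/2)^{?}$ or $(Ck)^{k/2}$. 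Approach (i) is self-contained and is what I would actually write out, since it isolates exactly where the $\sqrt k$ enters (BDG) and where the $\sqrt T$ enters (the $L^2((0,T]\times\T)$ norm of the heat kernel convolution kernel scales like $T^{1/4}$ per BDG bracket, squared gives $T^{1/2}$).

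With Lemma~\ref{lem:IX} in hand, Lemma~\ref{lem:sup:inf} follows quickly by a perturbation argument about the mild formulation \eqref{eq:mild}. Writing $w(t,x)=(p_t*w_0)(x)+J_{f(w)}(t,x)+I_{g(w)}(t,x)$ and using $\|p_t*w_0\|_{C(\T)}\le\sup_\T w_0$ (stochastic semigroup contraction of the heat flow), on the event where $\sup_{(0,T]\times\T}w\le 2\sup_\T w_0$ one has $|f(w)|\le\ell|w|$ and $|g(w)|\le\ell|w|$ pointwise, so $\|g(w)\|_{L^\infty((0,T]\times\T)}\le 2\ell\sup_\T w_0$ and similarly the drift term is bounded via Lemma~\ref{lem:Young} by $3\ell\cdot 2\sup_\T w_0\cdot(T\vee T^{\dots})$, which is $o(\delta)\sup_\T w_0$ for $T$ small. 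A standard stopping-time/bootstrap device (localize at the first time $w$ exceeds $(1+\delta)\sup_\T w_0$, apply Lemma~\ref{lem:IX} to the stopped noise term, and note the stopped drift and initial terms are already below $(1+\delta/2)\sup_\T w_0$ deterministically for $T$ small in terms of $\delta,\ell$) then shows that the excess probability is bounded by $\P\{\|I_{g(w)\mathbb 1}\|_{C}\ge c\delta\sup_\T w_0\}\le C_*\exp(-c^2\delta^2(\sup_\T w_0)^2/(C_*\sqrt T\,(2\ell(1+\delta)\sup_\T w_0)^2))$, and the factors of $\sup_\T w_0$ cancel, leaving the claimed bound after adjusting the universal constant; the lower bound for $\inf_{(0,T]\times\T}w$ is symmetric, using that the same noise term controls the downward fluctuation and that $\inf_\T(p_t*w_0)\ge\inf_\T w_0$.
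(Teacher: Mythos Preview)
The paper does not actually prove Lemma~\ref{lem:IX}: it states that the result is Lemma~2.1 of Mueller \cite{Mueller1} with parameter dependencies tracked, and explicitly skips the details. So there is no ``paper's own proof'' to compare against beyond that citation. Your proposal goes well beyond this, supplying an actual argument via high-moment bounds and Chebyshev optimization, which is a standard and correct route to sub-Gaussian tails for stochastic convolutions. You also correctly identify the main technical point: Lemma~\ref{lem:Salins} as stated does not make the $k$-dependence of its constant explicit, so one must re-run the factorization with the BDG constant $O(\sqrt{k})$ tracked; this is exactly what is needed and your plan (i) is the right fix. One caution: the exponent bookkeeping you sketch from Lemma~\ref{lem:Salins} directly gives $T^{(k-2)/4}$ rather than $T^{k/4}$, and the stray $T^{-1/2}$ prefactor is \emph{not} harmlessly absorbed into $C_*$ uniformly as $T\to 0$ --- so you really do need to carry out the factorization from scratch rather than post-process Lemma~\ref{lem:Salins}. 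Mueller's original argument, by contrast, proceeds more directly via increment estimates and a Kolmogorov/Garsia-type chaining together with exponential-martingale bounds, which avoids the moment-optimization step; your route is arguably more systematic but requires more careful constant-tracking.

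Your appended sketch for Lemma~\ref{lem:sup:inf} is essentially the paper's own argument: the paper also localizes via the stopping time $\tau(1+\delta)$, truncates $f$ and $g$ accordingly, applies Lemma~\ref{lem:Young} to the drift and Lemma~\ref{lem:IX} to the noise, and reduces to $w_0\equiv 1$ by rescaling and comparison. The paper is a bit more careful in separating the $w_0\equiv1$ case first and then invoking the SPDE comparison theorem for the general case, whereas you rely on the cancellation of $\sup_\T w_0$ directly; both are fine.
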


Lemma \ref{lem:IX} is Lemma 2.1 of Mueller \cite{Mueller1}, except that we have additionally identified parameter
dependencies in that result. This can be done by carefully going through
the proof, the details of which we skip. Instead, we apply Lemma \ref{lem:IX}
in order to verify Lemma \ref{lem:sup:inf} next.

\begin{proof}[Proof of Lemma \ref{lem:sup:inf}: Part I]
	In this part one we establish the first inequality that is claimed in
	Lemma \ref{lem:sup:inf}. Before we begin let us
	observe that, because $C_*\ge 4$, 
	the lemma is nontrivial  only when
	\begin{equation}\label{T:range}
		T^{1/4} \le  \frac{\delta}{16\ell
		(1+\delta)}\wedge 1. 
	\end{equation} 
	Thus, we consider only the above values of $T$.
	
	We begin by studying the special case that $w_0\equiv1$. 
	In that case, let us consider
	the stopping times,
	\[\textstyle
		\tau(R) = \inf\left\{ t>0:\ \sup_{x\in\T} w(t\,,x) \ge R\right\}
		\qquad\forall R>0.
	\]
	Because $w_0\equiv1$, \eqref{eq:mild} and 
	the local properties of the Walsh stochastic integral imply that,
	outside a single $\P$-null set,
	\begin{equation}\label{mild:R}\begin{split}
		w(t\,,x) &\textstyle= 1 + \int_{(0,t)\times\T} p_{t-s}(x\,,y)
			 f_{1+\delta}(w(s\,,y))\,\d s\,\d y\\
		&\textstyle\hskip1in + \int_{(0,t)\times\T} p_{t-s}(x\,,y)
			g_{1+\delta}(w(s\,,y))\,\dot{W}(\d s\,\d y)\\
		&\textstyle= 1 + J_{f_{1+\delta}(w)}(t\,,x) + I_{g_{1+\delta}(w)}(t\,,x),
	\end{split}\end{equation}
	for all $t\in(0\,,\tau(1+\delta)]$ and $x\in\T$, where $f_R(a)=f(a\wedge R)$
	and $g_R(a)= g(a\wedge R)$ for all $a\ge0$, and
	$I_{g_R}$ and $J_{f_R}$ were respectively defined in \eqref{IX}
	and Lemma \ref{lem:Young}.
	
	Lemmas \ref{lem:Young} and \ref{lem:IX} together imply that, uniformly for all 
	$k\ge2$, $T\in(0\,,1)$,
	and $\delta>0$,
	\begin{align}\notag
		&\P\left\{ \tau(1+\delta)\le T\right\}\\\notag
		&\le \P\left\{ \| I_{ g_{1+\delta}(w)}\|_{C((0,T]\times\T)}
			\ge\tfrac{\delta}{2}~,~ \tau(1+\delta)\le T\right\}\\
		&\hskip1in + \P\left\{ \| J_{f_{1+\delta}(w)}\|_{C((0,T]\times\T)} \ge
			\tfrac{\delta}{2} ~,~\tau(1+\delta)\le T\right\}\label{P1}\\\notag
		&\le C_*\exp\left( -\frac{\delta^2}{4C_*M_\delta^2\sqrt{T}}\right)
			+\P\left\{ \|f_{1+\delta}(w)\|_{L^k([0,T]\times\T)} \ge 
			\frac{\delta}{6T^{(2k-3)/(2k)}}\right\},
	\end{align}
	where $C_*\ge 4$ is the same universal constant that appears in Lemma \ref{lem:IX},
	$k\in[2\,,\infty)$ can be as large as we want, and
	\begin{equation}\label{M:delta}
		M_\delta = \sup_{|x|\le1+\delta}| g(x)|\le
		\lip(g)(1+\delta),
	\end{equation}
	because $ g(0)=0$ and $ g $ is Lipschitz continuous.
	Since $f(0)=0$ and $f$ is Lipschitz,
	$\|f_{1+\delta}(w)\|_{C([0, T]\times\T)}\le \lip(f)\{|w|\wedge1+\delta\}\le(1+\delta)\lip(f).$
	In addition, 
	$ \|f_{1+\delta}(w)\|_{L^k([0,T]\times\T)} \leq 2 \|f_{1+\delta}(w)\|_{C([0, T]\times\T)}$
	for all $k\geq 2$ and $T\leq 1$.
	Consequently,
	\begin{equation}\label{P=0}
		\P\left\{ \|f_{1+\delta}(w)\|_{L^k([0,T]\times\T)} \ge 
		\frac{\delta}{6T^{(2k-3)/(2k)}} \right\}=0
	\end{equation}
	as long as
	\[
		T < \left[\frac{\delta}{12(1+\delta)\lip(f)}\right]^{2k/(2k-3)}
		\wedge 1.
	\]
	Owing to \eqref{T:range}, the preceding condition holds when $k=2$.
	This proves that \eqref{P=0} holds when $k=2$, as well. This verifies the first assertion
	of the lemma in the case that $w_0\equiv1$.
	
	In the general case that
	$w_0>0$, consider
	\[
		\tilde{w}(t\,,x) = \frac{w(t\,,x)}{\sup w_0}\qquad\forall t\ge0,\, x\in\T.
	\]
	Then, $\tilde{w}$ satisfies \eqref{eq:mild} with $f$, $g$, and $w_0$
	respectively replaced by 
	\[
		\tilde{f}(z) = \frac{f \left( z\sup_\T w_0\right)}{\sup_\T w_0},\quad
		\tilde{g}(z) = \frac{g \left( z\sup_\T w_0\right)}{\sup_\T w_0},\quad
		\tilde{w}_0(x) = \frac{w_0(x)}{\sup_\T w_0},
	\]
	for all $z\in\R$ and $x\in\T$. 
	Let $\bar{w}$ denote the solution \eqref{SHE} starting
	from $\bar{w}_0\equiv1$ and with $F=(f\,,g)$ replaced by $\tilde{F}=(\tilde{f}\,,\tilde{g})$.
	Because $\tilde{w}_0\le\bar{w}_0$, the comparison theorem
	for  SPDEs (mentioned in the proof of Lemma \ref{lem:dissipation})
	implies that $\P\{\tilde{w}\le\bar{w}\}=1$ and hence, for all $\delta,T>0$,
	\[
		\P\left\{ \adjustlimits\sup_{t\in(0,T]}\sup_{x\in\T}w(t\,,x)\ge (1+\delta)
		\sup_\T w_0\right\}
		\le \P\left\{ \adjustlimits\sup_{t\in(0,T]}\sup_{x\in\T}\bar{w}(t\,,x)\ge 1+\delta\right\}.
	\] 
	Thus, we can apply \eqref{P1} with $(\bar{w}\,,\tilde{ F })$ in place of $(w\,, F )$
	in order to conclude that, for all $\delta,T\in(0\,,1)$
	and $k\ge2$,
	\begin{align*}
		&\textstyle\P\left\{ \sup_{t\in(0,T]}\sup_{x\in\T}w(t\,,x)\ge (1+\delta)
			\sup_\T w_0\right\}\\
		&\textstyle\le C_* \e^{-\delta^2 C_*^{-1} \tilde{M}_\delta^{-2}/\sqrt{T}}
			+\P\left\{ \|\tilde{f}_{1+\delta}(w)\|_{L^k([0,T]\times\T)} \ge 
			\frac{\delta}{6T^{\frac{2k-3}{2k}}} \right\},
	\end{align*}
	where $\tilde{M}$ comes from \eqref{M:delta} but with $g$ replaced
	by $\tilde{g}$; that is,
	\[
		\tilde{M}_\delta = \sup_{|z|\le1+\delta} \frac{%
		 g\left( z\sup_\T w_0\right)}%
		{\sup_\T w_0} \le \lip(g)(1+\delta),
	\]
	once again because $ g(0)=0$ and $g$ is Lipschitz continuous. Because
	$\lip(\tilde{f})\le\lip(f)$, this and \eqref{T:range} together
	yield the first
	assertion of Lemma \ref{lem:sup:inf} in complete generality.
\end{proof}
	
\begin{proof}[Proof of Lemma \ref{lem:sup:inf}: Part II]
	Once again, let us first consider the case that $w_0\equiv1$. In that case, we may
	introduce the stopping times
	\begin{align*}
		\tau_1(\delta) &\textstyle= \inf\left\{ t>0:\, \inf_{x\in\T}w(t\,,x)\le
			1-\delta\right\},\\
		\tau_2(\delta) &\textstyle= \inf\left\{ t>0:\, \sup_{x\in\T}w(t\,,x)\ge 1+\delta\right\}
			\qquad\forall\delta\in(0\,,1).
	\end{align*}
	With the additional constraint ``$w_0\equiv1$'' in mind, we can write
	the following: Uniformly for all $T\in(0\,,1)$, 
	\begin{equation} \label{eq:P-tau-est}
        \begin{split}
		\P&\left\{  \tau_1(\delta) \le T \right\} \le
			\P\left\{\tau_1(\delta)\le T <\tau_2(\delta)\right\} + 
			\P\left\{ \tau_2(\delta)\le T\right\}\\
		&\le \P\left\{\tau_1(\delta)\le T <\tau_2(\delta)\right\} +
			C_*\exp\left(-\frac{\delta^2}{C_*[\lip(g)]^2(1+\delta)^2\sqrt{T}}\right), 
        \end{split}
	\end{equation}
	thanks to first inequality of Lemma \ref{lem:sup:inf}, where $C_*$ is the same universal constant
	that appeared in both Lemma \ref{lem:IX} and the already-established first inequality of
	Lemma \ref{lem:sup:inf}.
	Basic properties of the Walsh integral
	and \eqref{eq:mild} together imply that,
	almost surely on $\{\tau_2(\delta)>T\}$, $w$ solves
	\eqref{mild:R} with $R=1+\delta$,
	where we recall that $f_R(y)=f(y\wedge R)$ and $g_R(y)= g(y\wedge R)$ for all $y\ge0$.
	Recall $I_{g_R}$ and $J_{f_R}$ respectively from  \eqref{IX} and Lemma \ref{lem:Young}
	in order to deduce from \eqref{eq:P-tau-est}, \eqref{P=0}, and from
	Lemma \ref{lem:IX} that
	\begin{align*}
		\P\left\{\tau_1(\delta)\le T <\tau_2(\delta)\right\} 
			&\le \P\left\{ \left\|I_{ g_{1+\delta}(w)}\right\|_{C((0,T]\times\T)} \ge \delta/2\right\}\\
		&\le C_*\exp\left( -\frac{\delta^2}{4C_*\sup_{|z|\le 1+\delta}| g(z)|^2\sqrt{T}}\right)\\
		&	\le C_*\exp\left( -\frac{\delta^2}{C_* [\lip(g)]^2(1+\delta)^2\sqrt{T}}\right),
	\end{align*}
	since $ g(0)=0$ and hence $| g(z)|\le\lip(g)|z|$ for all $z\in\R$ thanks to 
	Assumption \ref{ass:par}.
	This proves Lemma \ref{lem:sup:inf} part 2, under the additional hypothesis that $w_0\equiv1$. In the general case 
	we set
	\[
		\widehat{w}(t\,,x)= \frac{w(t\,,x)}{\inf_\T w_0}\qquad\forall t>0,\, x\in\T,
	\]
	and observe,
	using the comparison theorem for SPDEs (mentioned in the proof of 
        Lemma \ref{lem:dissipation}), that
	$\widehat{w}\ge\underline{w}$ where $\underline{w}$ solves \eqref{SHE}
	starting from $\underline{w}_0\equiv1$
	and with $F\in\{f\,,g\}$ replaced by 
	\[
		\underline{ F }(z) = \frac{F \left( z \inf_\T w_0\right)}{\inf_\T w_0}\qquad
		\forall z\in\R.
	\]
	Thus, the portion that we have already proved shows 
	\begin{align*}
		&\P\left\{  \inf_{(0,T]\times\T}  w \leq 
			(1-\delta) \inf_\T w_0 \right\} =		
			\P\left\{ \inf_{(0,T]\times\T} \widehat{w} \le 1-\delta \right\}\\
		&\le \P\left\{ {\inf_{(0,T]\times\T}} \underline{w} \le 1-\delta \right\}
			\le C_*\exp\left( -\frac{\delta^2}{C_* 
			[\lip(\underline{g})]^2(1+\delta)^2\sqrt{T}}\right).
	\end{align*}
	This inequality yields the desired conclusion since $\lip(\underline{F})=\lip(F)$
	for $F\in\{f\,,g\}$,
	and because $1+\delta<2$.
\end{proof}

\subsection{Local Tightness}
The purpose of this subsection is to highlight the following local
tightness result. This result was first found by Gu and Komorowski \cite{GK}
in the case that $f\equiv0$ and $g(z)\propto z$ for
all $z\in\R$ (the parabolic Anderson model). We outline how one can
obtain the result in the more general case stated below by adapting 
and adjusting their arguments.

\begin{lemma} \label{lem:oscillation}
	For every $a>0$ and $k\in[2\,,\infty)$,
	\begin{equation}\label{mom:oscillation}
		\sup_{t\in[a,1+a]} \E\left(\left|
		\frac{ \sup_{x\in\T} w(t\,,x) }{ \inf_{x\in\T} w(t\,,x)} \right|^k\right)<\infty,
	\end{equation}
	uniformly in the initial data $w_0\in L^\infty(\T)$. 
\end{lemma}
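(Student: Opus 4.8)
\emph{Proof idea.}
My plan is to adapt the argument of Gu and Komorowski \cite{GK}, who established \eqref{mom:oscillation} in the special case $f\equiv 0$ and $g(z)\propto z$, and to accommodate the nonlinearities through the two-sided linear envelopes that Assumption \ref{ass:par} supplies: since $f(0)=g(0)=0$, since $f$ and $g$ are Lipschitz, and since ${\rm L}_g>0$, after replacing $g$ by $-g$ if necessary (which affects neither the standing fact that $w\ge 0$ nor the claim) I may assume that ${\rm L}_g\,z\le g(z)\le\ell z$ and $-\ell z\le f(z)\le \ell z$ for all $z\ge 0$, with $\ell=\lip(f)\vee\lip(g)$.

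\emph{Reduction to a two-point bound.} First I would reduce \eqref{mom:oscillation} to the pointwise estimate
\[
	\sup_{t\in[a,1+a]}\ \sup_{x,x'\in\T}\
	\E\!\left[\left(\frac{w(t,x)}{w(t,x')}\right)^{\!k}\right]<\infty
	\qquad\text{uniformly over admissible }w_0 .
\]
Granting this, a standard chaining argument recovers \eqref{mom:oscillation}: one covers $\T$ by $O(n)$ points at scale $1/n$, invokes the two-point bound at those points, and controls the oscillation of $w(t,\cdot)$ between adjacent points using the $L^k$ spatial-increment bounds for $w(t,\cdot)$ that \eqref{eq:mild} together with Lemmas \ref{lem:Young} and \ref{lem:Salins} provide; since the resulting H\"older seminorm and the quantity $\inf_\T w(t)$ that normalises it are both dominated by comparable powers of the overall size of the solution, this passage does not reintroduce a dependence on $\|w_0\|_\infty$.

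\emph{The two-point bound via restarting.} Next I would fix $\epsilon=a/2$ and restart \eqref{SHE} at time $t-\epsilon$, writing, for $t\in[a,1+a]$,
\[
	w(t,x) = (p_\epsilon * w(t-\epsilon))(x)
	+ \int_{(0,\epsilon)\times\T} p_{\epsilon-s}(x,y)\, f(w(t-\epsilon+s,y))\,\d s\,\d y
	+ \widetilde I(x),
\]
where $\widetilde I$ denotes the stochastic convolution of $g(w)$ over $(t-\epsilon,t]\times\T$. Three elementary facts then drive the estimate. First, because $\epsilon$ is bounded below, $p_\epsilon(x,y)$ lies between two positive constants $c_a\le C_a$ depending only on $a$, so $c_a\langle w(t-\epsilon)\rangle\le (p_\epsilon*w(t-\epsilon))(x)\le C_a\langle w(t-\epsilon)\rangle$, where $\langle\varphi\rangle:=\int_\T\varphi(y)\,\d y$; in particular this term is $x$-independent up to $a$-dependent constants. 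Second, the drift term is at most $\ell\,\epsilon\,\sup_{[t-\epsilon,t]\times\T}w$ in absolute value, since $|f(z)|\le\ell|z|$ and $\int_\T p_{\epsilon-s}(x,y)\,\d y=1$. Third, Lemma \ref{lem:IX} applied on $(t-\epsilon,t]\times\T$ shows that, on the event $\{\sup_{[t-\epsilon,t]\times\T}w\le\Lambda\}$, the norm $\|\widetilde I\|_{C((t-\epsilon,t]\times\T)}$ exceeds $y$ with probability at most $C_*\exp(-y^2/(C_*\sqrt\epsilon\,\ell^2\Lambda^2))$. Combining the three, on the event where $\sup_{[t-\epsilon,t]\times\T}w\le\Lambda$ and $\|\widetilde I\|_C\le\tfrac12 c_a\langle w(t-\epsilon)\rangle$ the ratio $w(t,x)/w(t,x')$ is $O(1)$, uniformly in $t,x,x',w_0$, provided only that $\Lambda\lesssim_a\langle w(t-\epsilon)\rangle/\epsilon$.

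\emph{Closing the estimate, and the main obstacle.} It then remains to show that the complementary event --- on which $\sup_{[t-\epsilon,t]\times\T}w$ is large relative to $\langle w(t-\epsilon)\rangle$, or $\|\widetilde I\|_C$ is comparable to $\langle w(t-\epsilon)\rangle$ --- carries so little probability that it contributes only a finite amount to the $k$-th moment, uniformly in $t$, $x$, $x'$, and $w_0$. Here I would use the short-time stability of Lemma \ref{lem:sup:inf}, restarted at $t-\epsilon$ (which bounds $\sup_{[t-\epsilon,t]\times\T}w$ by a fixed multiple of $\sup_\T w(t-\epsilon)$ off a super-polynomially small event), together with the Gaussian tail above and control of the one-step-earlier oscillation $\sup_\T w(t-\epsilon)/\langle w(t-\epsilon)\rangle$ (this is where the nondegeneracy ${\rm L}_g>0$ must enter, as in \cite{GK}, to keep $\inf_\T w(t-\epsilon)$ from being too small); for the genuinely linear (parabolic Anderson) part of this argument I would quote \cite{GK} directly, the only extra ingredient being the Lipschitz drift, which is harmless. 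I expect the main obstacle to be precisely this last point: the estimate is self-referential, in that the lower bound on $\inf_\T w(t)$ is effective only on an event whose probability deteriorates as the oscillation at time $t-\epsilon$ grows, so it cannot be closed in a single restart. One must instead run the Gu--Komorowski bootstrap, iterating the restart backwards toward a time at which the solution is still comparable --- through the strictly positive heat kernel --- to $\langle w_0\rangle$, while verifying that the accumulated failure probabilities remain summable and, crucially, that every constant produced along the way is independent of $w_0$.
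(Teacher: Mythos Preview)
Your approach differs substantially from the paper's, and the difference is instructive. The paper does \emph{not} work directly with the mild form of $w$ and then attempt to close a self-referential estimate. Instead it linearises first: since $w>0$, set $b(t,x)=f(w(t,x))/w(t,x)$ and let $\zeta=(g(w)/w)\,\dot W$, so that $w$ solves the \emph{linear} SPDE $\partial_t V=\partial_x^2 V+bV+V\zeta$ with bounded random drift $|b|\le\lip(f)$ and with $\zeta$ a worthy martingale measure whose dominating measure is comparable to Lebesgue (here is where ${\rm L}_g>0$ enters). One then introduces the fundamental solution $Z_{s,y}(t,x)$ of this linear equation started from $\delta_y$ at time $s$, superposes $w(t,x)=\int_\T Z_{t-\nu,y}(t,x)\,w(t-\nu,y)\,\d y$, and reads off
\[
\frac{\sup_x w(t,x)}{\inf_x w(t,x)}\;\le\;\frac{\sup_{x,y} Z_{t-\nu,y}(t,x)}{\inf_{x,y} Z_{t-\nu,y}(t,x)}.
\]
Translation invariance of the noise makes the law of the right-hand side independent of $t$, and the bound is manifestly independent of $w_0$; a comparison argument removes the drift $b$ (replacing it by $\pm\lip(f)$, which only produces a deterministic exponential factor), and then the Gu--Komorowski moment bounds for the delta-started linear problem apply verbatim because they only use that the dominating measure of the noise is comparable to Lebesgue.

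Relative to this, your proposal has a real gap at exactly the point you flag. Your restart at $t-\epsilon$ produces a bound that is effective only on an event whose probability is controlled by the oscillation at time $t-\epsilon$; you then say ``run the Gu--Komorowski bootstrap,'' but their bootstrap is powered by the very superposition/linearity structure you have chosen not to use, so it is not clear what concretely you would iterate or why the constants would stay $w_0$-free. The paper's linearisation trick is precisely what dissolves this circularity: once you pass to the fundamental solution $Z$, both the $t$-uniformity and the $w_0$-uniformity come for free, and no bootstrap is needed. Your chaining reduction from the sup/inf ratio to two-point ratios is also underspecified --- it requires negative-moment control of $\inf_\T w(t)$ that you have not established independently.
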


\begin{remark}
	Choose and fix an arbitrary $a>0$.
	Thanks to Lemma \ref{lem:oscillation} and Chebyshev's inequality,
	\begin{equation}\label{eq:oscillation}
		\lim_{b\to\infty} \frac{1}{\log b}\sup_{t\in[a,1+a]}\log
		\P\left\{ \frac{ \sup_{x\in\T} w(t\,,x) }{ \inf_{x\in\T} w(t\,,x)} \ge b \right\}
		= -\infty.
	\end{equation}
	It should be possible to estimate the expectation in \eqref{mom:pos:neg}
	more carefully than will be done below in order to obtain the existence
	of a constant $c=c(a)>1$ such that
	the expectation in \eqref{mom:oscillation} is bounded above
	by $c\exp(ck^3)$, uniformly for all $k\in[2\,,\infty)$.
	This, via a Chernoff-type bound, ought to then yield 
	the following strengthening of \eqref{eq:oscillation}:
	\[
		\limsup_{b\to\infty}
		\frac{1}{(\log b)^{3/2}} \sup_{t\in[a,1+a]} \log
		\P\left\{ \frac{ \sup_{x\in\T} w(t\,,x) }{ \inf_{x\in\T} w(t\,,x)} \ge b \right\}<0.
	\]
	We have not pursued this as we do not need the additional refinement.
\end{remark}

\begin{proof}[Sketch of the proof of Lemma \ref{lem:oscillation}]

	We  follow closely the proof of  Lemma 4.1 of Gu and Komorowski \cite{GK}
	who studied the particular case where $f(z)=0$ and $g(z) \propto z$ for all $z\in\R$ 
	(the parabolic Anderson model). 
	In order to take care of the nonlinear terms $(f\,,g)$, we follow a renormalization
	trick that was used also in our earlier work \cite{KKM2024}.

	According to Mueller \cite{Mueller1}, $w>0$ on $(0\,,\infty)\times\T$ off a single
	$\P$-null set. Define
	\[
		b(t\,, x)=\frac{f(w(t\,, x))}{w(t\,, x)},\quad
		\zeta (t\,, x) : = \frac{g(w(t\,, x))}{w(t\,, x)}\, \dot{W}(t\,, x), 	
	\]
	and consider the SPDE,
	\begin{equation}\left[\label{SHE_Theta_v}\begin{split}
		&\partial_t V = \partial_x^2 V + bV + V\zeta \quad \text{on }(0\,,\infty)\times\T, \\
		&\text{subject to } V(0) = V_0,
	\end{split}\right.\end{equation}
	where $V_0$ is a finite Borel measure on $\T$.
	As was observed in \cite{KKM2024}, we can regard $\zeta$ 
	as a worthy martingale measure whose dominating measure  is bounded below and 
	above by constant multiples of Lebesgue measure. More precisely,
	\[
		M_t(A) = \int_{(0,t)\times A}\zeta(\d s\,\d y)
		= \int_{(0,t)\times A} \frac{g(w(s\,, y))}{w(s\,, y)}\, \dot{W}(\d s\,\d y),
	\]
	defined for every $t\ge0$ and all Borel sets $A\subseteq\T$,
	defines a worthy martingale measure in the sense of Walsh \cite{Walsh}, and
	\[	
		L_1 t |A| \le \<M(A)\>_t = \int_0^t\d s\int_A\d y
		\left| \frac{ g(w(s\,, y)) }{  w(s\,, y)} \right|^2
		\le L_2 t|A|,
	\]
	where $|A|$ denotes the Lebesgue/Haar measure of $A$. The constants $L_1$ and $L_2$
	are inherited from $g$ only; see Assumption \ref{ass:par}. 
	In addition, $|b(t\,,x)|\le\lip(f)$ for all $t\ge0$ and $x\in\T$
	since $f:\R \to \R$ is Lipschitz and $f(0)=0$. 
	Well-established SPDE arguments can be used to show the
	unique solvability of \eqref{SHE_Theta_v}; see 
	Conus, Joseph, Khoshnevisan, and Shiu \cite{CJKS2014}
	and Chen and Dalang \cite{ChenDalang2014}.\footnote{ The results of
	\cite{CJKS2014,ChenDalang2014} are for the drift-free case where $b\equiv0$, though their
	arguments works equally well for the drift terms considered here.}
	In particular, if
	we start the SPDE \eqref{SHE_Theta_v} at $w_0\in L^\infty(\T)$ at time zero, 
	then we recover the solution to \eqref{SHE}
	which also starts at $w_0$.
	
	Now, let $Z_{s,y}(t\,, x)$ denote the fundamental solution to \eqref{SHE_Theta_v}.
	That is, for every choice of $(s\,,y)\in\R_+\times\T$, $Z_{s,y}$ solves the following
linear stochastic heat equation:
	\begin{equation}\left[\label{eq:fundamental}\begin{split}
		&\partial_t Z_{s,y} = \partial^2_x Z_{s,y}+ b\, Z_{s, y} +
			Z_{s,y}\,\zeta\hskip.5in\text{on $(s\,,\infty)\times\T$},\\
		&\text{subject to }Z_{s,y}(s) = \delta_y\hskip1.05in\text{on $\T$}.  
	\end{split}\right.\end{equation}
	As usual, the above is short-hand for the assertion that every $Z_{s,y}$ solves the following 
	SPDE:
	\begin{equation}\label{Z}
	\begin{aligned} 
		Z_{s,y}(t\,,x) &\textstyle
			= p_{t-s}(y-x) +\int_{(s,t)\times\T} p_{t-r}(z-x) b(r\,,z) Z_{s, y}(r\,, z) \d r\, \d z   \\
		&\textstyle\qquad  +\int_{(s,t)\times\T}
			p_{t-r}(z-x) Z_{s,y}(r\,,z)\,\zeta(\d r\,\d z).
	\end{aligned}
	\end{equation}
	Because $\delta_y$ is a finite Borel measure, the preceding paragraphs imply that
	$Z_{s,y}$ exists uniquely for every $s\ge 0$ and $y\in\T$. The following is more than
	enough to take care of forthcoming measurability questions.
	
	\begin{claim}
		The four-parameter process $Z:(t\,,x\,;s\,,y)\mapsto Z_{s,y}(t\,,x)$ is jointly continuous
		on its domain of definition,
		\[
			\text{\rm Dom}(Z) = \left\{(t\,,x\,;s\,,y):\, t>s\ge 0,\ x,y\in\T \right\}.
		\]
	\end{claim}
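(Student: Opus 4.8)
The plan is to establish joint continuity of $(t,x;s,y)\mapsto Z_{s,y}(t,x)$ on $\text{Dom}(Z)$ by a Kolmogorov-type continuity argument applied to the integral equation \eqref{Z}, combined with a freezing/localization in the ``starting'' parameters $(s,y)$. First I would reduce to a quantitative moment estimate: for every compact subset $K\subset\text{Dom}(Z)$ there are constants $C=C(K)$ and $\theta>0$ such that
\[
	\E\left( \left| Z_{s,y}(t,x) - Z_{s',y'}(t',x') \right|^k \right)
	\le C\left( |t-t'| + |x-x'| + |s-s'| + |y-y'| \right)^{\theta k},
\]
for all sufficiently large $k$ and all $(t,x;s,y),(t',x';s',y')\in K$. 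Once this is in hand, the Kolmogorov--Chentsov theorem in four variables (taking $k$ large enough that $\theta k$ exceeds the dimension $4$, which is possible since the right-hand side scales like a positive power of the increment with $\theta$ independent of $k$) yields a jointly H\"older-continuous version on each such $K$, and exhausting $\text{Dom}(Z)$ by compacta gives the claim. The point of restricting to compacta strictly inside $\text{Dom}(Z)$ is that on such a set $t-s$ is bounded below by a positive constant, so the heat-kernel singularity in \eqref{Z} is harmless; this is also why the statement is about continuity on the open-type domain $\{t>s\ge 0\}$ and not up to the diagonal $t=s$.

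For the moment estimate itself I would proceed in two stages. Stage one handles increments in the ``running'' variables $(t,x)$ with $(s,y)$ fixed: here \eqref{Z} has exactly the structure of a stochastic heat equation with bounded drift coefficient $b$ (we have $|b|\le\lip(f)$) and a noise driven by the worthy martingale measure $\zeta$ whose dominating measure is comparable to Lebesgue measure ($L_1\,t|A|\le\langle M(A)\rangle_t\le L_2\,t|A|$). Standard Walsh-type $L^k$ estimates --- of the flavour of Lemma \ref{lem:Salins} and Lemma \ref{lem:Young}, applied to the increment $Z_{s,y}(t,x)-Z_{s,y}(t',x')$ rather than to $Z_{s,y}$ itself, using the smooth-off-the-diagonal regularity of $p_{\bullet}$ away from $t=s$ --- give H\"older control with some exponent $\theta_1>0$, after first establishing the a priori moment bound $\sup_{(t,x)\in K}\E(|Z_{s,y}(t,x)|^k)<\infty$ via a Gronwall argument on \eqref{Z}. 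Stage two handles increments in $(s,y)$. For the $y$-increment, $Z_{s,y}-Z_{s,y'}$ satisfies the same linear equation \eqref{Z} but started from $\delta_y-\delta_{y'}$ at time $s$; the deterministic forcing term is then $p_{t-s}(y-x)-p_{t-s}(y'-x)$, which is H\"older in $y-y'$ uniformly for $t-s$ bounded below, and propagating this through the Gronwall/stochastic-convolution machinery gives an exponent $\theta_2>0$. For the $s$-increment one compares $Z_{s,y}$ and $Z_{s',y}$ with $s<s'$: on $(s',\infty)$ both solve \eqref{Z}, so by uniqueness $Z_{s,y}(t,x)=\int_\T Z_{s',z}(t,x)\,Z_{s,y}(s',z)\,\d z$ (the semigroup property of the fundamental solution), and the difference $Z_{s,y}(t,x)-Z_{s',y}(t,x)$ is controlled by $\|Z_{s,y}(s')-\delta_y\|$ tested against $Z_{s',\cdot}(t,x)$; the short-time stability of $Z_{s,y}$ near its initial datum --- again a stochastic-convolution estimate on \eqref{Z} over the short interval $[s,s']$ --- gives the required power of $|s-s'|$. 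Taking $\theta=\min(\theta_1,\theta_2,\theta_3)$ and combining the three one-parameter estimates via the triangle inequality furnishes the displayed bound.

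The main obstacle I anticipate is not any single estimate but the bookkeeping needed to keep all constants \emph{uniform over the relevant compactum} $K$ while simultaneously dealing with the measure-valued initial data $\delta_y$ (so that the natural norms are not $L^\infty$ but something weaker, e.g.\ negative-order Sobolev or simply pairings against bounded continuous functions) and with the $s$-dependence of the lower bound on $t-s$. Concretely, the delicate point is Stage two's $s$-increment: one must quantify how fast $Z_{s,y}$ relaxes away from the Dirac mass it started at, uniformly in $(s,y)$ ranging over a compact set, and this requires the short-time stochastic-convolution bounds to be applied to an equation whose initial condition is a measure, which is exactly the borderline case where the factorization method of Lemma \ref{lem:Salins} needs a little care. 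Everything else is a routine, if somewhat lengthy, application of the moment inequalities already assembled in \S\ref{sec:facts} together with Gronwall's lemma and Kolmogorov's continuity theorem; since the claim is explicitly flagged as being ``more than enough to take care of forthcoming measurability questions,'' I would present this argument in compressed form, stating the key moment estimate and the two structural facts (a priori moment bound and the semigroup identity) and then invoking Kolmogorov.
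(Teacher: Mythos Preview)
Your overall strategy---localize to compacta on which $t-s$ is bounded below by a positive constant, then establish $L^k$ moment bounds on four-parameter increments and invoke a multiparameter Kolmogorov--Chentsov theorem---is exactly the route the paper takes; the paper's own proof is in fact only a sketch that reduces to $\text{Dom}_\varepsilon(Z)=\{t>s+\varepsilon\ge\varepsilon\}$, cites Garsia for the multiparameter continuity criterion, and then points out that on $\text{Dom}_\varepsilon(Z)$ the heat kernel $(t,x;s,y)\mapsto p_{t-s}(y-x)$ is $C^\infty$ with bounded derivatives of all orders, leaving the remaining details to the reader.

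The one place your proposal diverges is Stage two for the $s$-increment: you invoke the semigroup identity $Z_{s,y}(t,x)=\int_\T Z_{s',z}(t,x)\,Z_{s,y}(s',z)\,\d z$ and then face the short-time relaxation of $Z_{s,y}$ away from its Dirac initial mass, which you correctly flag as the delicate point. The paper's sketch sidesteps this entirely by treating all four variables on the same footing via the smoothness of $p_{t-s}(y-x)$; in practice this amounts to subtracting the two integral equations \eqref{Z} for $Z_{s,y}$ and $Z_{s',y'}$ directly, bounding the extra integration strip $(s,s')\times\T$ using the a priori moment bound on $Z$ and the integrability of the heat kernel, and then running Gronwall on the remainder. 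Your route is correct but buys you an unnecessary detour through measure-valued short-time estimates; the paper's implicit route is shorter and avoids the obstacle you anticipate.
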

	Among other things, this Claim can be used to show that \eqref{Z} holds for all
	$(t\,,x\,;s\,,y)\in\text{\rm Dom}(Z)$, off a single $\P$-null set. We leave the (few)
	remaining details to the reader and instead outline the proof of the Claim above.
	
	In order to prove the Claim, it suffices to verify that for every fixed $\varepsilon\in(0\,,1)$, 
	the 4-parameter random field $Z$ is jointly continuous on $\text{\rm Dom}_\varepsilon(Z)$,
	which can be defined as the collection of all points $(t\,,x\,;s\,,y)$ such that
	$t>s+\varepsilon\ge \varepsilon$ and $x,y\in\T$. This is done by
	making an appeal to a suitable formulation of the Kolmogorov
	continuity criterion that works for multiparameter processes -- see
	for an example Garsia \cite{Garsia} -- using the fact that, over the set $\text{\rm Dom}_\varepsilon(Z)$,
	the heat-kernel mapping $(t\,,x\,;s\,,y)\mapsto p_{t-s}(y-x)$ is $C^\infty$, and it is bounded
	with bounded derivatives of all orders  since we are a 
positive distance from the singularity of $p$. We will skip the remaining details
	and leave them to the interested reader. Instead, we move on to make another 
	type of observation.
	
	Since \eqref{SHE_Theta_v}
	is a linear SPDE, a standard appeal to a stochastic Fubini theorem assures us that
	we can superimpose its solution using \eqref{eq:fundamental} as follows:
	\[\textstyle
		w(t\,,x) = \int_\T Z_{t-\nu,y}(t\,,x) w(t-\nu\,,y)\,\d y
		\quad\text{a.s.}\ \forall t\ge\nu, x\in\T.
	\]
	Alternatively, this is a consequence of Markov property, applied at time
	$t-\nu$. It follows that, for all $t\ge\nu$,
	\[
		\frac{\sup_{x\in\T}w(t\,,x) }{\inf_{x\in\T} w(t\,,x)} \leq 
		\frac{\sup_{x\in\T} Z_{t-\nu,y}(t\,, x)}{ \inf_{x, y \in \T} Z_{t-\nu,y}(t\,, x) },
	\]
	which is a random variable thanks to the Claim above.
	Choose and fix $t\ge\nu>0$ as above.
	Thanks to the translation invariance of $Z$, itself inherited from the translation invariance of
	the white noise $\dot{W}$,
	the law of the spatial random field $(x\,,y)\mapsto Z_{t-\nu,y}(t\,,x)$ does not
	depend on the numerical value of $t\ge\nu$. 
	In light of H\"older's inequality, 
	it then suffices to prove that
	\begin{equation}\label{mom:pos:neg} \textstyle
		\E\left(\sup_{x, y \in \T} \left[ Z_{0,y}(T, x) \right]^k\right)<\infty
		\quad\forall T>0,\, k\in\Z.
	\end{equation}
	[N.B. The above holds for all $k\in\Z$ and not only $k\in\N$.]
	
	We now consider the following SPDE: 
	\begin{equation}\left[\label{eq:fundamental2}\begin{split}
		&\partial_t \tilde Z_{s,y} = \partial^2_x \tilde Z_{s,y} +
			\tilde Z_{s,y}\,\zeta\hskip1in\text{on $(s\,,\infty)\times\T$},\\
		&\text{subject to }\tilde Z_{s,y}(s) = \delta_y\hskip1.05in\text{on $\T$}. 
	\end{split}\right.\end{equation}
	In the case where  $f(z)=0$ and $g(z) \equiv z$, 
	Lemma 4.1 of Yu Gu and Komorowski
	\cite{GK} asserts precisely the above moment bound 
	\eqref{mom:pos:neg}. Close inspection of those arguments shows 
	the  said bounds in \cite{GK} depend primarily 
	on the fact that the space-time white noise
	$\dot{W}$ is a worthy martingale measure whose dominating measure is the Lebesgue (or Haar) measure. 
	In other words, the Gaussian nature of the noise does not play a key role in the analysis of Lemma 4.1
	of \cite{GK}.  In the present more general setting where $z\mapsto g(z)$ can be  nonlinear, 
	the dominating measure for $\zeta$ is to within a
	constant multiple of the Lebesgue measure. And that is enough to ensure
	that the proof of Lemma 4.1 of \cite{GK} still works for $\tilde Z_{s, y}$ 
	in \eqref{eq:fundamental2}. In other words,
	\begin{equation}\label{mom:pos:neg2}\textstyle
		\E\left(\sup_{x, y \in \T}  [\tilde{Z}_{0,y}(T, x)  ]^k\right)<\infty
		\quad\forall T>0,\, k\in\Z.
	\end{equation}
	
	Now we consider the general case,  and define two space-time random fields $Z^+$ and $Z^-$ 
	as follows:
	\begin{equation}\label{fundamental34}
		Z^\pm_{s, y}(t\,, x) : = \tilde Z_{s, y}(t\,, x) \e^{\pm\lip(f) (t-s)}.
	\end{equation} 
	Thanks to \eqref{mom:pos:neg2} and \eqref{fundamental34},
	\begin{equation}\label{mom:pos:neg3}\textstyle
		\E\left(\sup_{x, y \in \T} [ Z^\pm_{0,y}(T\,, x) ]^k\right)<\infty
		\quad\forall T>0,\, k\in\Z.
	\end{equation}
	In addition, it is clear that  $Z^\pm$ satisfy the following SPDEs: 
	\begin{align*}
		&\partial_t Z^\pm_{s,y} = \partial^2_x Z^\pm_{s,y}
			\pm\lip(f) Z^\pm_{s, y} + 
			Z^\pm_{s,y}\,\zeta\quad\text{on $(s\,,\infty)\times\T$},\\
		&\text{subject to } Z^\pm_{s,y}(s) = \delta_y\hskip1.05in\text{on $\T$}.
	\end{align*}
	Because $-\lip(f)  \le z^{-1}f(z)  \le \lip(f)$ for all $z> 0$, and since
	$Z_{0, y},Z^\pm_{0, y}\ge0$,  the comparison argument
        (mentioned in the proof of Lemma \ref{lem:dissipation}) shows that 
	$Z^-_{0, y} \le Z_{0, y}\le Z^+_{0, y}$ for all $y\in\T$.
	Therefore, \eqref{mom:pos:neg3} implies \eqref{mom:pos:neg}
	and concludes the proof.
\end{proof}

We conclude this section with the following corollary of Lemma \ref{lem:oscillation}.

\begin{proposition} \label{pr:oscillation}
	For every $k\in[2\,,\infty)$,
	\[
		\sup_{t\geq 1} \E\left(\left|
		\frac{ \sup_{x\in\T} w(t\,,x) }{ \inf_{x\in\T} w(t\,,x)} \right|^k\right)<\infty,
	\]
	uniformly in the initial data $w_0\in L^\infty(\T)$.
\end{proposition}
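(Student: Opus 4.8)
The plan is to promote Lemma~\ref{lem:oscillation} from a window of unit length to the entire half-line $[1\,,\infty)$ by a single application of the (time-homogeneous) Markov property enjoyed by the solution to \eqref{SHE}. First I would specialize Lemma~\ref{lem:oscillation} to $a=1$: this produces a finite constant $C=C(k)$, \emph{independent of the initial profile}, such that
\[
	\E\left(\left| \frac{\sup_{x\in\T} w(t\,,x)}{\inf_{x\in\T} w(t\,,x)} \right|^k\right) \le C
	\qquad\text{for every $t\in[1\,,2]$ and every deterministic $w_0\in L^\infty(\T)$,}
\]
which already settles the range $t\in[1\,,2]$.

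Next I would fix $t>2$ and set $s=t-1>1$. The solution to \eqref{SHE} has a version that is continuous (indeed strictly positive, by Mueller~\cite{Mueller1}) on $(0\,,\infty)\times\T$, so $w(s)\in C(\T)\subseteq L^\infty(\T)$ almost surely. Moreover $w$ is a Markov process in $C(\T)$ driven by $\dot{W}$, so that, conditionally on $\mathscr{G}:=\mathscr{F}(s)\vee\sigma(w_0)$, the time-shifted field $\{w(s+r)\}_{r\ge0}$ has the law of a solution to \eqref{SHE} started from the $\mathscr{G}$-measurable random function $w(s)$. Since the bound in Lemma~\ref{lem:oscillation} with $a=1$ holds uniformly over \emph{all} initial data in $L^\infty(\T)$, it may be invoked after this conditioning, and, taking $r=1\in[1\,,2]$, it gives
\[
	\E_{\mathscr{G}}\left(\left| \frac{\sup_{x\in\T} w(t\,,x)}{\inf_{x\in\T} w(t\,,x)} \right|^k\right) \le C
	\qquad\text{a.s.}
\]
Taking expectations yields the same bound $C$ for $\E(|\sup_\T w(t)/\inf_\T w(t)|^k)$ when $t>2$, with no dependence on $w_0\in L^\infty(\T)$. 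Combining the two ranges proves the proposition.

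The one point that deserves care is the precise form of the Markov property that is being used --- namely that the conditional law of the future of $w$, given $\mathscr{F}(s)\vee\sigma(w_0)$, is exactly that of a fresh solution of \eqref{SHE} issued from the present state $w(s)$ --- together with the (already available) fact that $w(s)\in L^\infty(\T)$ almost surely for $s>0$; it is precisely the uniformity over $L^\infty(\T)$ built into Lemma~\ref{lem:oscillation} that makes the insertion of its bound legitimate once we condition on the random restart point. Both facts are routine for Walsh-type SPDEs with globally Lipschitz coefficients, but I would record them explicitly rather than leave them implicit.
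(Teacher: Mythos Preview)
Your proposal is correct and follows essentially the same route as the paper: both arguments invoke the Markov property of $\{w(t)\}_{t\ge0}$ together with the uniformity over initial data in Lemma~\ref{lem:oscillation} (with $a=1$) to propagate the unit-window bound to all of $[1,\infty)$. The only cosmetic difference is that the paper conditions at integer times and covers $[n,n+1]$ at once, whereas you condition at the continuous time $s=t-1$; the substance is identical.
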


\begin{proof}
	Fix $k\ge2$ as instructed,  define
	\[
		\Lambda = \adjustlimits
		\sup_{t\in[1\,,2]} \sup_{w_0\in L^\infty(\T)}\E\left(\left|
		\frac{ \sup_{x\in\T} w(t\,,x) }{ \inf_{x\in\T} w(t\,,x)} \right|^k\right),
	\]
	and apply the Markov inequality in order to see that
	\[
		\sup_{t\in[n,n+1]} \E\left(\left.\left|
		\frac{ \sup_{x\in\T} w(t\,,x) }{ \inf_{x\in\T} w(t\,,x)} \right|^k\ \right| \,
		\mathscr{F}(n)\right)\le\Lambda.
	\]
	Optimize over $n\in\Z_+$ to finish.
\end{proof}

\subsection{Tracking}\label{subsec:track}
Here and throughout let us keep in mind that 
\begin{equation}\label{mu:<}
	-\infty < \mu < {\rm L}_g^2/64.
\end{equation}
In order to see this, let us recall
that $\mu=f'(0+)$ [see \eqref{mu:sigma}] and $f(0)=0$
[see Assumption \ref{ass:par}], and hence
$\mu = \lim_{z\to0+}[f(z)/z]\le \sup_{z>0}[f(z)/z]$, which
yields \eqref{mu:<} thanks to \eqref{cond:f}. Let us introduce
a new variable $\gamma=\gamma(f\,,g)$, that satisfies $\gamma>0$
thanks to \eqref{cond:f} and is defined as
\begin{equation}\label{gamma}\textstyle
	\gamma = \frac{{\rm L}_g^2}{64}-\sup_{z>0}\frac{f(z)}{z}
\end{equation}
Let us pause to also define
\begin{equation}\label{tau(T)}
	\tau(T) = \inf\left\{ t \ge T:\  \|w(t)\|_{C(\T)} > \exp(-\gamma t)\right\},
\end{equation}
where $\inf\varnothing=\infty$ and $\gamma$ was defined in \eqref{gamma}. 
We plan to use the fact that $\tau(T)=\infty$ with high probability when $T\gg1$.
Specifically (Lemma \ref{lem:dissipation}), that there exists
a number $L_* =L_*(f,g)>1$ such that
\begin{equation}\label{P(tau)}
	\P\{\tau(T) = \infty\} \ge 1 -  L_*\e^{-T/L_*}
	\qquad\forall T>0.
\end{equation}

With the preceding in place, let us recall the notation in \eqref{R:star} --
to be used for the linearization errors for $f$ and $g$ -- in order to state the main
estimate of this subsection. The goal of this subsection is to establish the
next result which shows that if we start \eqref{SHE} and \eqref{PAM} at the
same place at a large time $T$, then with high probability,
the solutions track one another very closely for a short time.
       
\begin{proposition}\label{pr:tracking}
	For every $T>0$ let $u=u^{(T)}$ denote the solution to the parabolic Anderson model,
	\begin{equation*}\left[\begin{split}
		&\partial_t u = \partial^2_x u + \mu u  + \sigma u\dot{W}
			\hskip.95in\text{on $(T\,,\infty)\times\T$},\\
		&\text{subject to }u(0)=w(T)\hskip.9in\text{on $\T$}.
	\end{split}\right.\end{equation*}
	Then, for every $k \in (6\,,\infty)$ there exist non-random numbers
	$L=L(k\,,f\,,g)>0$ and 
	$\Delta_0=\Delta_0(k\,,f\,,g)\in(0\,,1)$ such that, independently of
	the value of $T$ and uniformly for all $\Delta(T)\in(0\,,\Delta_0)$,
	\[
		\E_{\mathscr{F}(T)}\left(\|w_T-u_T\|_{C([T,T+\Delta(T)]\times\T)}^k
		\right)\label{w_T-u_T}
		\le L \left[\mathscr{E}\left(\e^{-\gamma T}\right)\right]^k\|w(T)\|_{C(\T)}^k,
	\]
	almost surely on the event $\{\tau(T)>T\}$, where
	$\mathscr{E}$ was defined in \eqref{R:star}, and
	\[
		w_T(t\,,x) = w(t\wedge\tau(T)\,,x),\
		u_T(t\,,x) = u(t\wedge\tau(T)\,,x)\quad\forall t\ge T,\ x\in\T.
	\] 
\end{proposition}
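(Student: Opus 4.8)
The plan is to set up the difference $D(t) := w_T(t) - u_T(t)$ as the solution of a linear stochastic integral equation with a forcing term coming from the linearization error, and then run a standard Picard/Grönwall-type estimate in $C([T,T+\Delta]\times\T)$ using the moment bounds of Lemmas \ref{lem:Young} and \ref{lem:Salins}. Both $w$ and $u$ are given in mild form \eqref{eq:mild}, driven by the same white noise $\dot W$ on the time interval $[T,T+\Delta]$, with the same initial value $w(T)$ at time $T$. On the event $\{\tau(T)>T\}$ the solution $w$ stays below $\exp(-\gamma T)$ until time $\tau(T)$, so after stopping at $\tau(T)$ both $w_T$ and $u_T$ take values in the tiny interval where the linearization is controlled. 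Subtracting the two mild formulations and localizing at $\tau(T)$, one gets, for $t\in[T,T+\Delta]$,
\begin{align*}
	D(t\,,x) &= \int_{(T,t)\times\T} p_{t-s}(x\,,y)\,\big[f(w_T(s\,,y)) - \mu u_T(s\,,y)\big]\1_{\{s\le\tau(T)\}}\,\d s\,\d y\\
	&\quad + \int_{(T,t)\times\T} p_{t-s}(x\,,y)\,\big[g(w_T(s\,,y)) - \sigma u_T(s\,,y)\big]\1_{\{s\le\tau(T)\}}\,\dot W(\d s\,\d y).
\end{align*}

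Next I would split the brackets as $f(w_T) - \mu u_T = [f(w_T) - \mu w_T] + \mu D$ and similarly $g(w_T) - \sigma u_T = [g(w_T) - \sigma w_T] + \sigma D$. The \emph{error part} is estimated using the definition \eqref{R:star} of $\mathscr E$: on $\{\tau(T)>T\}$ and for $s\le\tau(T)$ we have $0\le w_T(s\,,y)\le\exp(-\gamma s)\le\exp(-\gamma T)$, hence $|f(w_T(s\,,y)) - \mu w_T(s\,,y)|\le\mathscr E(\e^{-\gamma T})\,w_T(s\,,y)\le\mathscr E(\e^{-\gamma T})\,\|w(T\wedge\cdot)\|$, and likewise for $g$ (using here that $w$ is nondecreasing in the sense of the bound, so $\|w_T(s)\|_{C(\T)}\le\|w(T)\|_{C(\T)}$ — more precisely one keeps the factor $\|w_T(s)\|_{C(\T)}$ and bounds it by the deterministic $\|w(T)\|_{C(\T)}$ which is $\mathscr F(T)$-measurable). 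Because $g(0)=0$ and positivity of $w$, these bounds hold pointwise. The \emph{coupling part}, $\mu D$ in the drift and $\sigma D$ in the noise, is absorbed by a Grönwall argument: apply Lemma \ref{lem:Young} to the deterministic convolution and Lemma \ref{lem:Salins} to the stochastic convolution (legitimate since $k>6$), taking conditional expectations $\E_{\mathscr F(T)}$ throughout; each contributes a factor of a positive power of $\Delta$ times $\E_{\mathscr F(T)}(\|D\|_{C([T,T+\Delta]\times\T)}^k)$, so choosing $\Delta_0$ small enough that the resulting constant is $<1/2$ lets us move that term to the left side.

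The output of this is a bound of the shape
\[
	\E_{\mathscr F(T)}\big(\|D\|_{C([T,T+\Delta]\times\T)}^k\big)\le L\,\big[\mathscr E(\e^{-\gamma T})\big]^k\,\|w(T)\|_{C(\T)}^k + \tfrac12\,\E_{\mathscr F(T)}\big(\|D\|_{C([T,T+\Delta]\times\T)}^k\big),
\]
on $\{\tau(T)>T\}$, which rearranges to the claimed inequality (with a new $L$). A couple of technical points need care: first, the stopping at $\tau(T)$ must be inserted so that Lemma \ref{lem:Salins} applies with its stopping-time formulation — one works with $w_T$ and $u_T$ directly, which are the stopped fields, and checks that the integrands are predictable and satisfy the requisite $L^k$-integrability (this follows from the a priori moment bounds for $w$ and $u$, e.g.\ via Lemma \ref{lem:oscillation}/Proposition \ref{pr:oscillation} together with the Lipschitz bounds on $f,g$, which give $L^k(\Omega)$ control of $\sup w(T)$ hence of $\|w(T)\|_{C(\T)}$ on the relevant time window). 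Second, one must verify that the conditional expectation can legitimately be pulled through the estimates — this is fine because $\dot W$ restricted to $(T,\infty)\times\T$ is independent of $\mathscr F(T)$, so conditionally on $\mathscr F(T)$ the field $w(T)$ is a fixed $L^\infty(\T)$ function and the SPDEs for $w|_{[T,\infty)}$ and $u$ are driven by a fresh white noise. The main obstacle is the bookkeeping around the stopping time: ensuring that $f(w_T) - \mu w_T$ etc.\ are controlled by $\mathscr E(\e^{-\gamma T})$ \emph{for all $s\in[T,T+\Delta]$} requires that the relevant argument of $f$ and $g$ — namely $w_T(s\,,y) = w(s\wedge\tau(T)\,,y)$ — stays in $(0\,,\e^{-\gamma T})$, which is exactly why one stops at $\tau(T)$ and why the proposition is only asserted on $\{\tau(T)>T\}$; making the localization airtight (in particular that $u_T$, though it need not itself stay small, only enters through $D$ and so is handled by Grönwall rather than by the $\mathscr E$ bound) is the delicate part of the argument.
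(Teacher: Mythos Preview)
Your plan is essentially the paper's proof: the paper writes $w_T-u_T=H_1+H_2+H_3+H_4$ where $H_1,H_3$ are exactly your ``coupling parts'' $\mu D$ and $\sigma D$ (handled by Lemmas \ref{lem:Young} and \ref{lem:Salins}, producing a factor $[\Delta(T)]^{(k-2)/4}$ that is absorbed for small $\Delta_0$) and $H_2,H_4$ are your ``error parts'' $w_T\mathscr E_f(w_T)$ and $w_T\mathscr E_g(w_T)$. One correction: the pointwise claim $\|w_T(s)\|_{C(\T)}\le\|w(T)\|_{C(\T)}$ is not true in general (the stopped path is only bounded by $\e^{-\gamma T}$, not by $\|w(T)\|_{C(\T)}$); to recover the factor $\|w(T)\|_{C(\T)}^k$ in the error terms the paper instead invokes the standard moment bound $\E_{\mathscr F(T)}(\|w\|_{C([T,T+1]\times\T)}^k)\le c^k\|w(T)\|_{C(\T)}^k$ via the Markov property, and you should do the same.
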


\begin{proof}[Proof of Proposition \ref{pr:tracking}]
	Next, we recall that $u$ is the solution to the following (mild) integral equation:
	\begin{align*}
		u(t\,,x) &\textstyle= (p_t*w(T))(x) + \mu \int_{(T,\,t)\times\T} p_{t-s}(x\,,y) u(s\,,y)\,\d s\,\d y\\
		&\textstyle\hskip3cm + \sigma\int_{(T,\,t)\times\T} p_{t-s}(x\,,y) u(s\,,y)\,\dot{W}(\d s\,\d y).
	\end{align*}
	For every $z\in\R\setminus\{0\}$ define
	$\mathscr{E}_f(z) =[f(z)-\mu z]/z$ and
	$\mathscr{E}_g(z) = [g(z)-\sigma z]/z$
	in order to be able to write
	\begin{equation}\label{track}\textstyle
		w_T(t\,,x) - u_T(t\,,x) =\sum_{j=1}^4 H_j(t\,,x)
		\qquad\forall t\in[T\,,T+\Delta(T)],
	\end{equation}
	where
	\begin{align*}
		H_1(t\,,x) &\textstyle = \mu \int_{(T,\,t\wedge\tau(T))\times\T} 
			p_{(t\wedge\tau(T))-s}(x\,,y)\left[ w(s\,,y) -  u(s\,,y)\right]\d s\,\d y,\\
		H_2(t\,,x) &\textstyle = \int_{(T,\,t\wedge\tau(T))\times\T} p_{(t\wedge\tau(T))-s}(x\,,y)
			w(s\,,y)\mathscr{E}_f(w(s\,,y))\,\d s\,\d y,\\
		H_3(t\,,x) &\textstyle = \sigma \int_{(T,\,t\wedge\tau(T))\times\T} 
			p_{(t\wedge\tau(T))-s}(x\,,y)\left[ w(s\,,y) -  u(s\,,y)\right]
			\dot{W}(\d s\,\d y),\\
		H_4(t\,,x) &\textstyle = \int_{(T,\,t\wedge\tau(T))\times\T} p_{(t\wedge\tau(T))-s}(x\,,y)
			w(s\,,y)\mathscr{E}_g(w(s\,,y))\,\dot{W}(\d s\,\d y).
	\end{align*} 
	
	Equivalently,
	\begin{align*}
		H_1(t\,,x) &\textstyle = \mu \int_{(T,\,t\wedge\tau(T))\times\T} 
			p_{(t\wedge\tau(T))-s}(x\,,y)\left[ w_T(s\,,y) -  u_T(s\,,y)\right]\d s\,\d y,\\
		H_2(t\,,x) &\textstyle = \int_{(T,\,t\wedge\tau(T))\times\T} p_{(t\wedge\tau(T))-s}(x\,,y)
			w_T(s\,,y)\mathscr{E}_f(w_T(s\,,y))\,\d s\,\d y,\\
		H_3(t\,,x) &\textstyle = \sigma \int_{(T,\,t\wedge\tau(T))\times\T} 
			p_{(t\wedge\tau(T))-s}(x\,,y)\left[ w_T(s\,,y) -  u_T(s\,,y)\right]
			\dot{W}(\d s\,\d y),\\
		H_4(t\,,x) &\textstyle = \int_{(T,\,t\wedge\tau(T))\times\T} p_{(t\wedge\tau(T))-s}(x\,,y)
			w_T(s\,,y)\mathscr{E}_g(w_T(s\,,y))\,\dot{W}(\d s\,\d y).
	\end{align*}
	We analyze the random fields $H_1,\ldots,H_4$ separately.
        
         Since $\Delta(T)\le 1$, we may apply Lemma \ref{lem:Young}  in order to see
        that for all real numbers $k>6$, and independently of the value of $T>0$,
	\begin{align}\label{track:I_1}
		&\textstyle\E_{\mathscr{F}(T)}\left( \sup_{t\in[T,T+\Delta(T)]}
			\| H_1(t)\|_{C(\T)}^k\right)\\ \nonumber
		&\hskip1in\lesssim  [\Delta(T)]^{k(2k-3)/(2k)}\E_{\mathscr{F}(T)}\left(
			\|w_T - u_T\|_{L^k([T,T+\Delta(T)]\times\T)}^k\right)\\ \nonumber
		&\hskip1in \lesssim  [\Delta(T)]^{k-(1/2)} \E_{\mathscr{F}(T)}\left(
			\|w_T - u_T\|_{C([T,T+\Delta(T)]\times\T)}^k\right), 
	\end{align}
	almost surely on $\{\|w(T)\|_{C(\T)} \le \exp(-\gamma T)\}$, 
	where the implied constant is nonrandom and depends only on $(k\,,\mu)$. 
	This is the desired estimate for $H_1$. 
	
	We begin our analysis of $H_2$ in like manner. Namely, we observe that, a.s.\
	on $\{\|w(T)\|_{C(\T)} \le \exp(-\gamma T)\}$,
	\begin{align}
		&\textstyle\textstyle\E_{\mathscr{F}(T)}\left( \sup_{t\in[T,T+\Delta(T)]}
			\| H_2(t)\|_{C(\T)}^k \right)
			\label{track:pre:I_2}\\\nonumber
		&\textstyle\lesssim  [\Delta(T)]^{k(2k-3)/(2k)} \E_{\mathscr{F}(T)}\left(
			\|w_T(\mathscr{E}_f\circ w_T)\|_{L^k([T,T+\Delta(T)]\times\T)}^k\right)\\\nonumber
		&\textstyle\le  [\Delta(T)]^{k-(3/2)} \E_{\mathscr{F}(T)}\left(
			\|w_T(\mathscr{E}_f\circ w_T)\|_{L^k([T,T+\Delta(T)]\times\T)}^k \right),
	\end{align}
	where ``$\circ$'' denotes as usual the composition of functions, and
	the implied constant is nonrandom and  depends only on $(k\,,\mu)$. 
	By virtue of definition, the following holds
	$\omega$-by-$\omega$:
	\begin{equation}\label{w_T:exp}\textstyle
		\sup_{t\in[T,T+\Delta(T)]}\sup_{x\in\T} w_T (t\,,x)\le \e^{-\gamma T}.
	\end{equation}
	It follows from \eqref{track:pre:I_2} and the monotonicity of $\mathscr{E}$ --
	see \eqref{R:star} --  that, almost surely on 
	$\{\|w(T)\|_{C(\T)} \le \exp(-\gamma T)\}$,
	\begin{align*}
		&\textstyle\textstyle\E_{\mathscr{F}(T)}\left( \sup_{t\in[T,T+\Delta(T)]}
			\| H_2(t)\|_{C(\T)}^k \right)\\
		&\textstyle\lesssim  [\Delta(T)]^{k-(3/2)}
			\left[ \mathscr{E}\left(\e^{-\gamma T}\right)\right]^k
			\E_{\mathscr{F}(T)}\left(
			\|w_T\|_{L^k([T,T+\Delta(T)]\times\T)}^k \right)\\
		&\textstyle\le 2 [\Delta(T)]^{k-(1/2)} \left[ \mathscr{E}\left(\e^{-\gamma T}\right)\right]^k
			\sup_{(t,x)\in[T,T+\Delta(T)]\times\T}
			\E_{\mathscr{F}(T)}\left(
			|w_T(t\,,x)|^k\right).
	\end{align*}
	It is a standard part of SPDEs that the process
	$\{w(t)\}_{t\ge0}$ associated to \eqref{SHE} is a strong Markov process
	with values in $C(\T)$, and that there exists a constant $c=c(f,g)>0$,
	independently of the choice of $(k\,,w_0)\in[2\,,\infty)\times L^\infty(\T)$, such that
	\begin{equation}\label{track:mom}\textstyle
		\E\left( \|w\|_{C([0,1]\times\T)}^k\right) \le c^k\|w_0\|_{C(\T)}^k
		\qquad\forall (k\,,w_0)\in[2\,,\infty)\times C(\T).
	\end{equation}
	Since $\Delta(T)\le 1$ and $w$ is continuous away from $t=0$
	as long as $w_0\in L^\infty(\T)$, it follows from this and an application of
	the Markov property at time $T$ that a.s.\ on $\{\|w(T)\|_{C(\T)} \le \exp(-\gamma T)\}$,
	\[\textstyle
		\sup_{(t,x)\in[T,T+\Delta(T)]\times\T}
		\E_{\mathscr{F}(T)}\left(
		|w_T(t\,,x)|^k\right) \le c^k\|w(T)\|_{C(\T)}^k
		\quad\text{a.s.,}
	\]
	uniformly for all $k\in[2\,,\infty)$,
	and for the same constant $c$ as above. Because $c$ is deterministic and depends only
	on $f$ and $g$, it follows that there exists a deterministic number $c_1=c_1(f,g)>0$,
	independently of all else, such that almost surely on $\{\|w(T)\|_{C(\T)} \le \exp(-\gamma T)\}$,
	\begin{equation}\label{track:I_2}\begin{split}
		&\textstyle\E_{\mathscr{F}(T)}\left( \sup_{t\in[T,T+\Delta(T)]}
			\| H_2(t)\|_{C(\T)}^k \right)\\
		&\textstyle\hskip1in\le \left[c_1  \mathscr{E}\left(\e^{-\gamma T}\right)
			\|w(T)\|_{C(\T)}\right]^k  [\Delta(T)]^{k-(1/2)},
	\end{split}\end{equation}
	uniformly for all $k\in[2\,,\infty)$.
	This is the desired estimate for $H_2$. Now we proceed to analyze $H_3$.
	
	Next we observe that
	\[\textstyle
		\E_{\mathscr{F}(T)}\left( \sup_{t\in[T,T+\Delta(T)]}
		\| H_3(t)\|_{C(\T)}^k \right) \le \sigma^k
		\E_{\mathscr{F}(T)}(\bm{\mathcal{T}}^k),
	\]
	valid a.s.\ on $\{\|w_T\|_{C(\T)} \le \exp(-\gamma T)\}$, where 
	\[\textstyle
		\bm{\mathcal{T}} = \sup_{\substack{t\in[T,T+\Delta(T)]\\x\in\T}}
		\left|   \int_{(T,\,t)\times\T} p_{(t\wedge\tau(T))-s}(x\,,y)\left[ 
		w_T(s\,,y) -  u_T(s\,,y)\right]
		\dot{W}(\d s\,\d y)\right|
	\]
	Therefore, Lemma \ref{lem:Salins} yields
	\begin{align}\nonumber\textstyle
		&\textstyle\E_{\mathscr{F}(T)}\left( \sup_{t\in[T,T+\Delta(T)]}
			\| H_3(t)\|_{C(\T)}^k \right)\\\nonumber
		&\textstyle\quad\lesssim[\Delta(T)]^{(k-6)/4}\int_T^{T+\Delta(T)}\d s\int_\T\d y\
			\E_{\mathscr{F}(T)}\left( \left|  w_T(s\,,y) -  
			u_T(s\,,y)\right|^k\right)\\
		&\textstyle\quad\le 2[\Delta(T)]^{(k-2)/4}\E_{\mathscr{F}(T)}\left( 
			\| w_T - u_T \|_{C([T,T+\Delta(T)]\times\T)}^k \right),
			\label{track:I_3}
	\end{align}
	valid a.s.\ on $\{\|w(T)\|_{C(\T)} \le \exp(-\gamma T)\}$,
	where the implied constant is nonrandom and depends only on $(k\,,f\,,g)$.
	An additional appeal to Lemma \ref{lem:Salins} yields the following estimates
	for $H_4$, valid a.s. on $\{\|w(T)\|_{C(\T)}\le\exp(-\gamma T)\}$:
	\begin{align*}\textstyle
		\E_{\mathscr{F}(T)}&\textstyle\left( \sup_{t\in[T,T+\Delta(T)]}
			\| H_4(t)\|_{C(\T)}^k \right)\\
		&\lesssim  [\Delta(T)]^{(k-6)/4} \E_{\mathscr{F}(T)}\left(
			\|w_T(\mathscr{E}_g\circ w_T)\|_{L^k([T,T+\Delta(T)]\times\T)}^k\right)\\\nonumber
		&\le  2[\Delta(T)]^{(k-2)/4} \E_{\mathscr{F}(T)}\left(
			\|w_T(\mathscr{E}_g\circ w_T)\|_{C([T,T+\Delta(T)]\times\T)}^k \right),
	\end{align*}
	where the implied constant is nonrandom and depends only on $(k\,,f\,,g)$.
	Owing to \eqref{w_T:exp}, the preceding development
	yields the following, valid a.s.\
	on $\{\|w(T)\|_{C(\T)}\le\exp(-\gamma T)\}$:
	\begin{align*}
		\E_{\mathscr{F}(T)}&\textstyle\left( \sup_{t\in[T,T+\Delta(T)]}
			\| H_4(t)\|_{C(\T)}^k \right)\\
		&\lesssim [\Delta(T)]^{(k-2)/4} \left[\mathscr{E}\left( \e^{-\gamma T}\right)\right]^k
			\E_{\mathscr{F}(T)}\left(
			\|w_T\|_{C([T,T+\Delta(T)]\times\T)}^k \right),
	\end{align*}
	where the implied constant is nonrandom and depends only on $(k\,,f\,,g)$. Therefore,
	\eqref{track:mom} and the Markov property applied at time $T$ together yields
	the following: A.s.\ on $\{\|w(T)\|_{C(\T)}\le\exp(-\gamma T)\}$,
	\begin{align}
		&\textstyle\E_{\mathscr{F}(T)}\left( \sup_{t\in[T,T+\Delta(T)]}
			\| H_4(t)\|_{C(\T)}^k \right)
			\label{track:I_4}\\\nonumber
		&\hskip1in\lesssim [\Delta(T)]^{(k-2)/4} \left[\mathscr{E}\left( \e^{-\gamma T}\right) 
			\|w(T)\|_{C(\T)}\right]^k,
	\end{align}
	where once again the implied constant is nonrandom and depends only on $(k\,,f\,,g)$. 
	Combine \eqref{track},
	\eqref{track:I_1}, \eqref{track:I_2}, \eqref{track:I_3}, and \eqref{track:I_4},
	and recall that $\Delta(T)\le1$ and $k>6$,
	in order to see that almost surely on the event $\{\|w(T)\|_{C(\T)}\le\exp(-\gamma T)\}$,
	\begin{equation*}
        \begin{split}
		&\E_{\mathscr{F}(T)}\left( \|w_T-u_T\|_{C([T,T+\Delta(T)]\times\T)}^k\right)\\
		&\quad\lesssim [\Delta(T)]^{(k-2)/4} \E_{\mathscr{F}(T)}\left(
			\|w_T - u_T\|_{C([T,T+\Delta(T)]\times\T)}^k\right)\\
		&\qquad+[\Delta(T)]^{(k-2)/4} 
			\left[\mathscr{E}\left(\e^{-\gamma T}\right)\right]^k
			\|w(T)\|_{C(\T)}^k,
        \end{split}
	\end{equation*}
	where the implied constant is nonrandom and depends only on $(k\,,f\,,g)$. 
	Note in particular that the implied constant does not depend on $T$ nor on $\Delta(T)$. Therefore, 
	for every $k>6$ there exist non-random numbers $L=L(k\,,f\,,g)>0$
	and $\Delta_0=\Delta_0(k\,,f\,,g)\in(0\,,1)$
	such that for every $T>0$ and $\Delta(T)\in(0\,,\Delta_0)$,
	\begin{equation*}
        \begin{split}
		&\E_{\mathscr{F}(T)}\left( \|w_T-u_T\|_{C([T,T+\Delta(T)]\times\T)}^k\right)\\
		&\textstyle\quad\le \frac12 \E_{\mathscr{F}(T)}\left(
			\|w_T - u_T\|_{C([T,T+\Delta(T)]\times\T)}^k\right)
			+ \frac{L}{2}\left[
			\mathscr{E}\left( \e^{-\gamma T}\right)\right]^k
			\|w(T)\|_{C(\T)}^k,
        \end{split}
	\end{equation*}
	almost surely on the event $\{\|w(T)\|_{C(\T)}\le\exp(-\gamma T)\}$.
	This has the desired result.
\end{proof}

\section{A Coupling Estimate for PAM}\label{sec:coupling}
The goal of this section is to recall, and say a few things, about a coupling method of Mueller \cite{Mueller2}
that was dubbed as AM/PM coupling in \cite{KKM2023a}.

Let $u$ denote the solution to the parabolic Anderson model 
\begin{equation}\label{PAM:u}\left[\begin{split}
	&\partial_t u = \partial^2_x u + \mu u + \sigma u \dot{W}\hskip.5in\text{on $(0\,,\infty)\times\T$},\\
	&\text{subject to }u=u_0\hskip0.8in\text{on $\T$}.
\end{split}\right.\end{equation}
Throughout this section, we choose and fix a number $\alpha \in (0\,, 1)$ and define for all $y\in\R$,
\begin{equation}\label{Phi:W}\textstyle
	\Phi_\alpha(y) = \sqrt{(\alpha|y|\wedge1)},\
	\Psi_\alpha(y) = \sqrt{1-|\Phi_\alpha(y)|^2} = \sqrt{1-(\alpha|y|\wedge 1)},
\end{equation}
and let $\dot{W}_0$ denote a space-time white noise that is independent of $\dot{W}$. Then, define
\begin{equation}\label{PAM:v}\left[\begin{split}
	&\textstyle\partial_t v = \partial^2_x v + \mu v + \sigma v\left\{ \Psi_\alpha \left(\frac{u-v}{v}\right) \dot{W}+
        		\Phi_\alpha \left(\frac{u-v}{v}\right)\dot{W}_0\right\},\\
	&\textstyle\text{\text{on $(0\,,\infty)\times\T$}, subject to }v=v_0\text{ on $\T$}.
\end{split}\right.\end{equation}
It follows from a well-known tightness argument that there exists a pairing of the random field
$v=\{v(t\,,x)\}_{t\ge0,x\in\T}$ and a white noise $\dot{W}$ that together solve
\eqref{PAM:v} -- possibly after we enlarge the underlying probability 
space. The strong uniqueness  of $v$ does not hold; see Mueller, Mytnik, and Perkins \cite{MMP} and
it is not known whether strong uniqueness holds among positive solutions.
Still, one can see immediately that the law of $v$ is unique and is the same as the law of the solution to \eqref{PAM}, 
when $u_0$ in the latter is replaced by $v_0$ in the former. This is because [the martingale measure 
defined by]
\[\textstyle
	\Psi_\alpha \left(\frac{u-v}{v}\right) \dot{W}+
        \Phi_\alpha\left(\frac{u-v}{v}\right)\dot{W}_0,
\]
is in fact a space-time white noise\footnote{See the appendix of 
\cite{KKM2023a} for a proof. This uses the property 
that $\Psi^2+\Phi^2=1$, pointwise.}. Therefore, it follows that $v$ has the law of a parabolic Anderson model,
the same as $u$ in \eqref{PAM:u} but with a possibly different initial profile $v_0$, as was stated a moment ago.

\begin{proposition}\label{pr:coupling}
	There exists a number
	$A=A(f\,,g)>1$ such that
	\begin{align*}
		&\P\left\{ u(s)\neq v(s)\text{ for all $s\in(0\,,t]$}\right\}\\
		&\hskip.5in\le A\, \left( \frac{\|u_0 - v_0\|_{C(\T)}}{%
			\alpha t \min\left( \inf_{z\in \T} u_0(z)  \,, \inf_{z\in \T} v_0(z) \right)}\right)^{1/2}
			+ A\, \exp\left( -\frac{1}{A\sqrt{t}}\right),
	\end{align*}
	uniformly for all $t>0$ and all bounded, measurable, and non-zero 
	respective initial profiles $u_0,v_0:\T\to\R_+$.
\end{proposition}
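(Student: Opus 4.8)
The plan is to study the ratio process $r(t,x) = (u(t,x)-v(t,x))/v(t,x)$, or rather its spatial sup-norm, and to show it hits zero before time $t$ with the stated probability. The key point of the AM/PM coupling is that once $u(s_0)=v(s_0)$ at some time $s_0$, the two solutions satisfy the same SPDE thereafter (the driving noise coincides because $\Phi_\alpha(0)=0$, $\Psi_\alpha(0)=1$), so $u\equiv v$ on $[s_0,\infty)$; hence $\{u(s)\neq v(s)\ \forall s\in(0,t]\}$ is the event that the coupling has not yet succeeded by time $t$. So it suffices to bound the probability that $\inf_{s\le t}\inf_x r(s,x)>0$ together with $\inf_{s\le t}\inf_x (v(s,x)-u(s,x))/u(s,x)>0$ — i.e.\ that $u$ and $v$ stay strictly on the same side of each other throughout $(0,t]$ — which I would do by exhibiting a nonnegative supermartingale-type functional of $u-v$ that, by the structure of \eqref{PAM:v}, has quadratic variation bounded below on the event that $|u-v|/v$ is bounded away from $0$ and $1$.

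First I would reduce to a comparison of $u-v$ against an auxiliary process. Writing $D(t,x)=u(t,x)-v(t,x)$, subtracting the mild forms of \eqref{PAM:u} and \eqref{PAM:v} gives
\[
	D(t,x) = (p_t*(u_0-v_0))(x) + \mu\!\!\int_{(0,t)\times\T}\!\! p_{t-s}(x,y)D(s,y)\,\d s\,\d y + M_t(x),
\]
where $M$ is a stochastic-integral term whose integrand involves $\sigma u\dot W$ minus $\sigma v\{\Psi_\alpha(\cdot)\dot W+\Phi_\alpha(\cdot)\dot W_0\}$; a short computation shows its conditional quadratic variation density at $(s,y)$ equals $\sigma^2|D(s,y)|^2 - \sigma^2 v(s,y)^2(\text{something}) + \sigma^2 v(s,y)^2\Phi_\alpha(D(s,y)/v(s,y))^2$, and the crucial algebraic fact (using $\Psi^2+\Phi^2=1$) is that the $\dot W_0$-part contributes an extra $\sigma^2 v^2 \Phi_\alpha(D/v)^2 = \sigma^2 v^2(\alpha|D|/v\wedge 1) = \sigma^2 v(\alpha|D|\wedge v)$ worth of quadratic variation beyond the "diagonal" cancellation. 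On the event that the coupling has not succeeded and $|D|/v\le$ const (which one gets from Proposition~\ref{pr:oscillation}/local tightness up to a small-probability exceptional set contributing the $A\exp(-1/(A\sqrt t))$ term), this extra quadratic variation is $\gtrsim \alpha|D|\cdot(\inf v)$, i.e.\ bounded below by a multiple of $\alpha|D(s,y)|\min(\inf u_0,\inf v_0)$ after controlling $\inf v(s)$ from below on the good event.

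The core estimate is then a one-dimensional diffusion comparison: the quantity $N_t = \int_\T D(t,y)\,\d y$ (or a suitable positive linear functional killing the heat-semigroup and drift contributions — the heat semigroup preserves the integral and the $\mu$-drift only rescales it, so after multiplying by $e^{-\mu t}$ one gets a genuine local martingale) is a continuous local martingale whose quadratic variation grows at rate $\gtrsim \alpha\,(\inf v)\,\|D(s)\|_{L^1}\gtrsim \alpha\,(\inf v)\,|N_s|$ as long as $D$ has not changed sign. A time-changed Bessel/Feller-type argument then shows that such a martingale started from $N_0 = \int_\T(u_0-v_0) \le \|u_0-v_0\|_{C(\T)}$ (up to the $|\T|=2$ factor) must hit $0$ by time $t$ with probability at least $1 - \text{const}\cdot\big(\|u_0-v_0\|_{C(\T)}/(\alpha t\min(\inf u_0,\inf v_0))\big)^{1/2}$ — the square-root and the product $\alpha t\min(\cdots)$ in the denominator come precisely from the scaling of the hitting time of $0$ by a process whose square grows linearly, started at height comparable to $\|u_0-v_0\|_{C(\T)}$. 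Combining this hitting estimate with the probability of the exceptional event on which $\inf v(s)$ or $\|D(s)\|/v(s)$ misbehaves (bounded by $A\exp(-1/(A\sqrt t))$ via Lemma~\ref{lem:sup:inf} and Proposition~\ref{pr:oscillation}) yields the claimed bound.

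\textbf{Main obstacle.} The delicate step is making rigorous the passage from the \emph{pointwise} quadratic-variation lower bound (which lives at each $(s,y)$) to a \emph{scalar} lower bound of the form $\d\langle N\rangle_s \gtrsim \alpha(\inf v(s))|N_s|\,\d s$: one needs $\|D(s)\|_{L^1(\T)}\gtrsim |N_s|$, which is automatic only while $D(s,\cdot)$ has a single sign, so one must simultaneously track that $u$ and $v$ do not cross — but crossing is exactly coupling success, so the logic is self-consistent provided one sets up the stopping time $\rho = \inf\{s: D(s,\cdot)\text{ changes sign or }u(s)=v(s)\}$ and runs the martingale argument up to $\rho$. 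The second subtlety is controlling $\inf_{x}v(s,x)$ from below uniformly on $s\in(0,t]$ on a high-probability event; here I would invoke short-time stability (Lemma~\ref{lem:sup:inf}) for small $s$ and the local-tightness/oscillation bound (Proposition~\ref{pr:oscillation}, after noting $v$ is itself a PAM) for $s$ bounded away from $0$, patching at $s=1$, and absorbing the failure probability into the $A\exp(-1/(A\sqrt t))$ term. Everything else — the mild-form subtraction, the $\Psi^2+\Phi^2=1$ cancellation, the $e^{-\mu t}$ normalization, and the Bessel-type hitting estimate — is routine once this scalar reduction is in place.
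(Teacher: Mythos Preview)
Your overall architecture matches the paper's: form the scalar process $N_t=\int_\T D(t,y)\,\d y$, lower-bound its quadratic variation by $\alpha(\inf u\text{ or }v)|N_t|$ via the $\Phi_\alpha^2+\Psi_\alpha^2=1$ identity, and invoke a hitting-time estimate for a nonnegative local martingale together with Lemma~\ref{lem:sup:inf} to control the infimum. The paper additionally removes the drift by passing to $\e^{-\mu t}u,\e^{-\mu t}v$ at the outset, and then quotes a specific scalar inequality (their reference's Proposition~A.4) that gives exactly the $\sqrt{X(0)}/b$ bound; your ``Bessel/Feller'' heuristic is the right intuition for that step.

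There is, however, a genuine gap in your treatment of the sign of $D$. You write that ``crossing is exactly coupling success'' and propose to stop at $\rho=\inf\{s:D(s,\cdot)\text{ changes sign or }u(s)=v(s)\}$. But $D(s,\cdot)$ vanishing at some $x_0\in\T$ (a crossing) is \emph{not} the same as $u(s)\equiv v(s)$; if $D$ changes sign without vanishing identically, your martingale argument halts at $\rho$ having proved nothing about coupling. The paper closes this hole with a comparison-theorem reduction: it replaces $(u_0,v_0)$ by the ordered pair $(u_0\wedge v_0,\,u_0\vee v_0)$, constructs corresponding coupled solutions $(\underline{u},\bar{u})$ with $\underline{u}\le u\wedge v\le u\vee v\le\bar{u}$, and observes that $\underline{u}(t)=\bar{u}(t)$ forces $u(t)=v(t)$. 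After this reduction one has $u_0\le v_0$, whence the comparison theorem gives $D=v-u\ge0$ for \emph{all} time, so $\|D(s)\|_{L^1}=N_s$ identically and the sign issue disappears. This is the missing idea.

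A minor point: Proposition~\ref{pr:oscillation} is not needed here. The bound is vacuous unless $t<1$, so Lemma~\ref{lem:sup:inf} alone controls $\inf_{(0,t]\times\T}u\ge\tfrac12\inf_\T u_0$ with the required $A\exp(-1/(A\sqrt t))$ failure probability; no patching at $s=1$ is required.
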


This is the main result of this section.
Before we prove it, let us pause to make the following simple observation.

\begin{lemma}\label{lem:elem}
	If $p,q>0$ satisfy  $|p-q|\le p\wedge q$, then
	 $p\vee q\le 2(p\wedge q)$.
\end{lemma}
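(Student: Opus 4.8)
The plan is to reduce to a single case by symmetry and then unwind the definitions. Since the statement is symmetric in $p$ and $q$, I would assume without loss of generality that $p\le q$, so that $p\wedge q=p$ and $p\vee q=q$. Under this convention the hypothesis $|p-q|\le p\wedge q$ becomes $q-p\le p$.

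From $q-p\le p$ one immediately gets $q\le 2p$, that is, $p\vee q=q\le 2p=2(p\wedge q)$, which is exactly the claimed inequality. The only mild point to record is that the reduction ``$p\le q$'' is genuinely without loss of generality: swapping the roles of $p$ and $q$ leaves both the hypothesis $|p-q|\le p\wedge q$ and the conclusion $p\vee q\le 2(p\wedge q)$ unchanged, since $|p-q|=|q-p|$, $p\wedge q=q\wedge p$, and $p\vee q=q\vee p$.

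There is no real obstacle here; the lemma is elementary and is stated only to be invoked later (presumably in the proof of Proposition~\ref{pr:coupling}, to pass between $\inf u$, $\inf v$ and $\inf u\wedge\inf v$ when the two profiles are known to be close). The one thing to be careful about in the write-up is simply to state the $p\le q$ reduction explicitly rather than leaving it implicit, and to note that positivity of $p,q$ is not even used beyond making $p\wedge q$, $p\vee q$ meaningful. I would keep the proof to two or three lines.
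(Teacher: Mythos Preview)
Your proof is correct. The paper states Lemma~\ref{lem:elem} without proof, treating it as an elementary observation, so there is nothing to compare against; your two-line argument via the WLOG reduction $p\le q$ is exactly what one would expect.
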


We now return to Proposition \ref{pr:coupling} and observe that,
because $A>1$, the proposition has content only when
\begin{equation}\label{tu0}
	t<1
	\quad\text{and}\quad
	\|u_0-v_0\|_{C(\T)}< t\min\left(  \inf_{z\in \T} u_0(z)  \,, \inf_{z\in \T} v_0(z) \right).
\end{equation}
Thus, we see that Proposition \ref{pr:coupling}, Lemma \ref{lem:elem},
and \eqref{tu0} imply, and hence is equivalent to, the following. 

\begin{proposition}\label{pr:coupling:1}
	We can find 
	a number $A=A(f\,,g)>1$ such that
	\begin{align*}
		&\P\left\{ u(s)\neq v(s)\text{ for all $s\in(0\,,t]$}\right\}\\
		&\hskip.5in\le A\, \left( \frac{\|u_0 - v_0\|_{C(\T)}}{%
			\alpha t\max\left( \inf_{z\in \T} u_0(z)  \,, \inf_{z\in \T} v_0(z) \right) }\right)^{1/2}
			+ A\, \exp\left( -\frac{1}{A\sqrt{t}}\right),
	\end{align*}
	uniformly for all $t>0$ and all bounded, measurable, and non-zero 
	respective initial profiles $u_0,v_0:\T\to\R_+$.
\end{proposition}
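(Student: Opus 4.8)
The plan is to show that Proposition \ref{pr:coupling:1} and Proposition \ref{pr:coupling} are equivalent, and therefore it suffices to prove the latter. Write $p = \inf_\T u_0$ and $q = \inf_\T v_0$, both of which are positive by hypothesis, and set $m = p\wedge q$ and $M = p\vee q$. If the right-hand side of Proposition \ref{pr:coupling} already exceeds $1$ there is nothing to prove, so we may assume both conditions in \eqref{tu0} hold; in particular $\|u_0-v_0\|_{C(\T)} < tm \le m$. Since $|p-q| \le \|u_0-v_0\|_{C(\T)} \le m = p\wedge q$, Lemma \ref{lem:elem} applies and gives $M = p\vee q \le 2(p\wedge q) = 2m$, equivalently $m \ge M/2$. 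Consequently
\[
	\left( \frac{\|u_0-v_0\|_{C(\T)}}{\alpha t\, m}\right)^{1/2}
	\le \left( \frac{2\|u_0-v_0\|_{C(\T)}}{\alpha t\, M}\right)^{1/2}
	= \sqrt{2}\left( \frac{\|u_0-v_0\|_{C(\T)}}{\alpha t\, M}\right)^{1/2}.
\]
Thus the bound of Proposition \ref{pr:coupling}, with its leading constant $A$ replaced by $\sqrt 2 A$, dominates the bound of Proposition \ref{pr:coupling:1}; absorbing the $\sqrt 2$ into a new constant $A' = A'(f,g) > 1$ yields Proposition \ref{pr:coupling:1}. Conversely, since $m \le M$ trivially, the bound of Proposition \ref{pr:coupling:1} dominates that of Proposition \ref{pr:coupling} with the same constant, so the two statements are indeed equivalent once one restricts to the nontrivial regime \eqref{tu0}. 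In particular, proving Proposition \ref{pr:coupling} establishes Proposition \ref{pr:coupling:1}, and it is the former that we prove in detail below.

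I expect the only subtlety here is bookkeeping: one must be careful that the reduction to \eqref{tu0} is legitimate (i.e., that outside that regime the stated inequality is vacuous because $A>1$ makes the right-hand side at least $1$), and that the application of Lemma \ref{lem:elem} is valid, which requires precisely the second inequality in \eqref{tu0} together with $p,q>0$. There is no analytic obstacle in this step; the genuine work — the coupling argument for the AM/PM pair $(u,v)$ from \eqref{PAM:u}–\eqref{PAM:v}, controlling the probability that $u$ and $v$ never meet on $(0,t]$ via the hitting-time estimates for the difference process $(u-v)/v$ — is carried out in the proof of Proposition \ref{pr:coupling}, which this reduction simply repackages into the more symmetric form used later in the paper.
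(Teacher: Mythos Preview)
Your proposal is correct and follows exactly the route the paper takes: the paper states just before Proposition~\ref{pr:coupling:1} that ``Proposition~\ref{pr:coupling}, Lemma~\ref{lem:elem}, and \eqref{tu0} together imply, and hence are equivalent to,'' Proposition~\ref{pr:coupling:1}, and you have simply written out that one-line reduction in full detail. Your bookkeeping is sound, including the observation $|p-q|\le\|u_0-v_0\|_{C(\T)}$ needed to invoke Lemma~\ref{lem:elem}.
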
 
If it is more convenient, we can replace $\max(\inf_{z\in \T} u_0(z)\,,\inf_{z\in \T} v_0(z))$ by either $\inf_{z\in \T} u_0(z)$ or  $\inf_{z\in \T} v_0(z)$ in Proposition \ref{pr:coupling:1}. 
In other words, if $\|u_0-v_0\|_{C(\T)} \ll t \inf_{z\in \T} u_0(z)$
for some $t\ll1$, then there is a coupling of two solutions $u$ and $v$ to the parabolic 
Anderson models, starting respectively at $u_0$ and $v_0$, such that
the coupling succeeds  at some point in the time interval 
$[0\,,t]$ with very high probability. Basic properties of these types of SPDEs 
(the Markov property and the stability of solutions) show that once the
coupling of $u$ and $v$ succeeds, say at time $t$, then the two solutions
are equal from then on; that is, $u(s)= v(s)$ for all $s\ge t$ a.s.\ on
the event $\{u(t)=v(t)\}$.

\begin{proof}
	First of all, we can and will assume without loss in generality that
	\begin{equation}\label{mu=0}
		\mu=0.
	\end{equation}
	For otherwise we could instead study $\tilde{u}(t\,,x)=\exp(-\mu t)u(t\,,x)$
	and $\tilde{v}(t\,,x)=\exp(-\mu t)v(t\,,x)$. The random fields $\tilde{u}$
	and $\tilde{v}$ solve the same SPDEs as do $u$ and $v$, respectively, but
	with $\mu$ replaced by $0$ everywhere. Since $u(t)=v(t)$ if and only if
	$\tilde{u}(t)=\tilde{v}(t)$ this justifies the assumption \eqref{mu=0},
	which we enforce from here on, and without further mention.
	
	Let $\bar{u}$ satisfy the parabolic Anderson model,
	\[
		\partial_t \bar{u} = \partial^2_x\bar{u} + \sigma\bar{u}\dot{W}\quad\text{on $(0\,,\infty)\times\T$},
	\]
	subject to $\bar{u}(0)= u_0\vee v_0$, and $\underline{u}$ solve the parabolic Anderson model
	\[\textstyle
		\partial_t \underline{u} = \partial^2_x\underline{u} + \sigma\underline{u}
		\left[\Psi_\alpha \left(\frac{\bar{u}-\underline{u}}{\underline{u}}\right) \dot{W}
		+\Phi_\alpha \left(\frac{\bar{u}-\underline{u}}{\underline{u}}\right)\dot{W}_0
		\right]\qquad\text{on $(0\,,\infty)\times\T$},
	\]
	subject to $\underline{u}(0)= u_0\wedge v_0$. 
	The comparison theorem for SPDEs (mentioned in the proof of Lemma \ref{lem:dissipation})
	implies that
	\begin{equation}\label{comparison:uv}
		\underline{u}\le u\wedge v\le u\vee v\le \bar{u},
	\end{equation}
	off a single $\P$-null set. Note that if $\underline{u}(t)=\overline{u}(t)$ for some $t>0$,
	then certainly $u(t)=v(t)$. Therefore, the fact that $|(u_0\wedge v_0) - (u_0\vee v_0)|=|v_0-u_0|$
	allows us to assume without loss of generality that 
	\begin{equation}\label{u_0<v_0}
		u_0 \le v_0,
	\end{equation}
	for otherwise we can replace the quadruple $(u_0\,,v_0\,,u\,,v)$ by the quadruple
	$(u_0\wedge v_0\,,u_0\vee v_0\,,\underline{u}\,,\bar{u})$ everywhere, and systematically, in the remainder
	of this proof in order to derive the same conclusion. We assume \eqref{u_0<v_0} henceforth. Owing to the 
	comparison theorem for SPDEs -- see \eqref{comparison:uv} -- it follows also that
	\begin{equation}\label{D>0}
		D = v - u \ge 0\qquad\text{off a single $\P$-null set.}
	\end{equation}
	One can check directly that the random field $D=\{D(t\,,x)\}_{t\ge0,x\in\T}$ satisfies the SPDE
	\[
		\partial_t D = \partial^2_x D + \sigma h_\alpha(u\,,v)\zeta
		\qquad\text{on }(0\,,\infty)\times\T,
	\]
	where $\zeta$ is another space-time white noise, and 
	\begin{equation}\label{h^2}\textstyle
		| h_\alpha(u\,,v)|^2= v^2 \left|\Phi_\alpha\left( \frac{u-v}{v}\right)\right|^2 
		+ \left| v\Psi_\alpha \left(\frac{u-v}{v}\right) -u\right|^2.
	\end{equation}
	To be sure, we remind that the preceding means that $D$ solves the 
	integral equation,
	\[\textstyle
		D(t\,,x) = (p_t*(v_0-u_0))(x) + \sigma \int_{(0,t)\times\T}
		p_{t-s}(x\,,y) h_\alpha (u(s\,,y)\,,v(s\,,y))\,\zeta(\d s\,\d y).
	\]
	In particular, it follows from a stochastic Fubini argument and
	\eqref{u_0<v_0} that
	\begin{equation}\label{X}\textstyle
		X(t) = \int_\T D(t\,,x)\,\d x = \|v(t)-u(t)\|_{L^1(\T)} \qquad[t\ge0]
	\end{equation}
	satisfies $X(0) = \|v_0-u_0\|_{L^1(\T)}$ and
	\[\textstyle
		X(t) = \|v_0-u_0\|_{L^1(\T)} + \sigma \int_{(0,t)\times\T}
		h_\alpha (u(s\,,y)\,,v(s\,,y))\,\zeta(\d s\,\d y)
		\qquad\forall t>0.
	\]
	Consequently, $X=\{X(t)\}_{t\ge0}$ is a continuous $L^2$-martingale
	with mean $\|v_0-u_0\|_{L^1(\T)}$ and quadratic variation,
	\begin{equation}\label{<X>}\textstyle
		\<X\>(t) = \sigma^2\int_0^t\d s\int_\T\d y\
		|h_\alpha (u(s\,,y)\,,v(s\,,y))|^2\qquad\forall t\ge0.
	\end{equation}
	Since $\Psi_\alpha^2+\Phi_\alpha^2=1$, we can expand the last square in
	\eqref{h^2} and rewrite $h_\alpha^2$ as
	\[\textstyle
		|h_\alpha(u\,,v)|^2 = v^2 + u^2 - 2uv\Psi_\alpha\left(\frac{u-v}{v}\right)
		= |u-v|^2 +2uv\left[ 1 - \Psi_\alpha\left(\frac{u-v}{v}\right)\right].
	\]
	Because $1-\Psi_\alpha = (1-\Psi_\alpha^2)/(1+\Psi_\alpha)$, the preceding can be rewritten as 
	\begin{align*}
		|h_\alpha(u\,,v)|^2 &= |u-v|^2 +2u\,\frac{\alpha (v-u) \wedge v}{1 
			+ \sqrt{1 - \left(\frac{ \alpha |u-v|}{v}\wedge1\right)}}\\
		&=|u-v|^2 +\frac{ 2\alpha u(v-u) }{1 
			+ \sqrt{1 - \left(\frac{ \alpha |u-v|}{v}\wedge1\right)}}&[\text{see \eqref{D>0}}]\\
		&\ge |u-v|^2 +\alpha u(v-u)\ge \alpha u(v-u)=\alpha uD.
	\end{align*}
	Thus, it follows from \eqref{<X>} that, for all $t>0$,
	\begin{equation}\label{d<X>}\textstyle
		\frac{\d\< X\>(t)}{\d t} \ge\sigma^2 \alpha  \int_\T
		u(t\,,y)D(t\,,y)\,\d y
		\ge \sigma^2 \alpha  \inf_{z\in\T}u(t\,,z) X(t).
	\end{equation}
	According to \eqref{X} and the Dambis and Dubins-Schwarz representation theorem
	(see Revuz and Yor \cite[Chapter V]{RevuzYor}),
	we can find a standard Brownian motion $W=\{W(t)\}_{t\ge0}$ such that,
	for all $t>0$,
	\begin{equation}\label{DDS}\textstyle
		X(t)
		 = X(0) + \sigma \int_0^t\sqrt{ \frac{\d\<X\>(s)}{\d s}}\,\d W(s)
		 \qquad\forall t>0.
	\end{equation}
		Define 
	\[\textstyle
		M(t) = \sigma\int_0^t \sqrt{ \frac{\d\<X\>(s)}{\d s}}\,\d W(s)
		\qquad\forall t>0,
	\]
	and  $M(0)=0$.  Evidently, $M=\{M(t)\}_{t\ge0}$ is a
	continuous local martingale. 
	Since $X\ge0$, we can see that 
	$\d X(t)\le X(t)\,\d t + \d M(t)$ and
	hence by the proof of Proposition A.4 of \cite{KKM2023a},
	applied with $\varepsilon\equiv0$,%
	\footnote{Proposition A.4 of \cite{KKM2023a} is stated 
	under the assumption that $M$ is a continuous $L^2$-martingale
	and that $\d X(t)\le X(t)\,\d t + \d M(t)$, but the proof
	works, without need for any changes, in the present case that $M$ is a continuous
	local martingale since the DDS representation theorem is applicable
	to continuous local martingales owing to localization.}
	\begin{align}\nonumber
		&\P\left\{  \inf_{s\in(0,t)}X(s) > 0 ~,
			\int_0^t \e^{-s}\,\frac{\d\<X\>(s)}{X(s)} \ge b^2\right\}
			\le\sqrt{\frac2\pi}\int_{|x|\le2\sqrt{X(0)}/b} \e^{-x^2/2}\,\d x\\
		&\hskip1.5in\le b^{-1}\sqrt{\tfrac8\pi \|v_0-u_0\|_{L^1(\T)}},\label{B1}
	\end{align}
	uniformly for all $b,t>0$.
	In fact, we do not need the weaker stochastic differential inequality $\d X(t)\le X(t)\,\d t+\d M(t)$.
	Since $\d X(t)=\sigma \d M(t)$, the proof of Proposition A.4 of \cite{KKM2023a} applies
	without need for introducing an exponential term, which accounts for the drift
	$X(t)\,\d t$, and similarly yields the slightly improved bound, 
	\begin{equation}\label{B2}
		\P\left\{  \inf_{s\in(0,t)}X(s) > 0 ~,~
		\int_0^t \frac{\d\<X\>(s)}{X(s)} \ge b^2\right\}
		\le b^{-1}\sqrt{\frac8\pi \|v_0-u_0\|_{L^1(\T)}},
	\end{equation} 	valid uniformly for all $b,t>0$. Either one of the
	inequalities \eqref{B1} or \eqref{B2} are good enough for our needs, since we are interested
	in the regime $t\ll1$.\footnote{In fact, we may deduce the slightly stronger
	inequality \eqref{B2} from \eqref{B1}
	by applying \eqref{B1} with the process $Y$ replaced by 
	$t\mapsto \bar{Y}(t)= \exp(t)Y(t)$. We leave the details
	to the interested reader.}
	But we prefer to use the slightly sharper inequality \eqref{B2}. 
	
	Recall \eqref{D>0}, \eqref{X}, and appeal to \eqref{d<X>}
	to deduce from \eqref{B2}, applied with $b^2
	= \sigma^2\alpha  t \inf_{(0, t]\times  \T} u(s\,, z)$, that
	for all $\eta,t>0$,
	\begin{align}\nonumber
		&\textstyle\P\left\{  \inf_{s\in(0,t)}\|v(s)-u(s)\|_{L^1(\T)} > 0 ~,\,
			 \inf_{(0, t]\times  \T} u(s\,,z)\ge  \frac{1}{2}\,\inf_{z\in \T} u_0(z) \right\}\\
		&\hskip1.5in
			\le \sqrt{\frac{16}
			{\sigma^2\pi \alpha   t }\frac{\|v_0-u_0\|_{L^1(\T)}}{\inf_{z\in \T} u_0(z)} }.
			\label{S1}
	\end{align}
	On the other hand, Lemma \ref{lem:sup:inf} implies that
	there exists a constant $c_*=c_*(f, g)>0$ such that 
	\[\textstyle
		\P\left\{  \inf_{(0, t]\times  \T} u(s\,,z)\le  \frac{1}{2}\,\inf_{z\in \T} u_0(z) \right\}
		\le c_*\exp\left( -c_*^{-1}/\sqrt{t}\right),
	\]
	uniformly for all $t\in(0\,,1)$ and all nonzero and measurable $u_0,v_0:\T\to\R_+$ that satisfy
	\eqref{u_0<v_0}.
	
	Now we may combine the above bound with \eqref{S1} and conclude that
	\begin{align*}
		\P\left\{ \inf_{s\in(0,t)} X(s)>0\right\} 
		&\le \sqrt{\frac{16}{\sigma^2\pi \alpha   t  }
			\frac{\|v_0-u_0\|_{L^1(\T)} }{\inf_{z\in \T} u_0(z)}} + 
			c_*\e^{-c_*^{-1}/\sqrt{t}},
	\end{align*}
	uniformly for all 	nonzero and measurable $u_0,v_0:\T\to\R_+$ that satisfy
	\eqref{u_0<v_0}, and for all $t\in(0\,,1)$. To be sure, we emphasize that the
	implied constant in the above depends only on $f\,, g$. By continuity,
	if $u(s)\neq v(s)$ for some $s\in(0\,,t)$ then $\inf_{s\in(0,t)}X(s)>0$.
	Therefore, we can conclude from \eqref{u_0<v_0} that  we can find 
	$A=A(f\,,g)>1$ such that
	\begin{align*}
		&\P\left\{ u(s)\neq v(s)\text{ for some $s\in(0\,,t]$}\right\}\\
		&\hskip1in\le A\, \left( \frac{\|u_0 - v_0\|_{C(\T)}}{%
			 \alpha   t \inf_{z\in \T} u_0(z) }\right)^{1/2}
			+ A\, \exp\left( -\frac{1}{A\sqrt{t}}\right),
			\end{align*}
	independently of our earlier choice of $\alpha$, and
	uniformly for all $t\in(0\,,1)$ and all bounded, measurable, and non-zero 
	respective initial profiles $u_0,v_0:\T\to\R_+$ that satisfies \eqref{u_0<v_0}. 
	Moreover, when $u_0$ and $v_0$ are not ordered pointwise, we can replace 
	$(u_0\,,v_0\,,u\,,v)$ by $(u_0 \wedge v_0\,,u_0 \vee v_0\,, \underline{u}\,, \overline{u})$,
	as explained earlier. This does not affect $\|u_0 - v_0\|_{C(\T)}$
	or the event $\{u(s) \neq v(s)\text{ for some }s\in(0,t]\}$. 
	In addition, $[\inf_{x\in \T} u_0(x) ]\wedge [ \inf_{x\in \T} v_0(x)] 
	\leq u_0\wedge v_0$.  Hence the same arguments show that the claim 
	holds for all nonzero initial profiles $u_0,v_0$,
	whether or not they are ordered pointwise. This proves the proposition.
\end{proof}

In Proposition \ref{pr:coupling}, we showed that with high probability 
the solutions $u$ and $v$  can be coupled by time $t$. In other words, 
there exists a random time $\tau \leq t$ such that $u(\tau)\equiv v(\tau)$.
The next proposition yields a tail estimate for the difference 
$\|u(s)-v(s)\|_{C(\T)}$ that is valid uniformly over time interval $[0\,, t]$.

\begin{proposition}\label{pr:u-v} 
	There exists $A=A(f\,, g)>0$ such that 
	the following holds for all $\Lambda,K>0$,
	$t \in (0\,, 1)$, and all bounded, measurable, and non-zero  
	initial functions $u_0,v_0:\T\to\R_+$ that satisfy
	$\|u_0-v_0\|_{C(\T)} \le K$:  
	\begin{align*} 
		&\textstyle \P \left\{ \sup_{0\leq s\leq t} \|u(s) - v(s) \|_{C(\T)}
			\ge K + \Lambda   \right\} \leq A\, \exp\left( -A^{-1}/\sqrt{t}\right)  \\
		& \qquad + A\, \exp\left( -\frac{\Lambda^2}{A \left( K+ \Lambda \right)
			\left(K + \Lambda +\alpha \left[  \|u_0\|_{C(\T)}\wedge \|v_0\|_{C(\T)}\right]
			\right)\sqrt{t} }  \right). 
	\end{align*} 
\end{proposition}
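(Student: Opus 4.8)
The plan is to bound the uniform‑in‑time difference by a stopping‑time localization that makes Lemma~\ref{lem:IX} applicable, together with the elementary estimate on the coupling coefficient from the proof of Proposition~\ref{pr:coupling}. \emph{Reduction to ordered data and the equation for $D$.} As in the proof of Proposition~\ref{pr:coupling}, the comparison theorem (quoted in the proof of Lemma~\ref{lem:dissipation}) lets us replace $(u_0,v_0,u,v)$ by $(u_0\wedge v_0,\,u_0\vee v_0,\,\underline u,\,\bar u)$: this leaves $\|u_0-v_0\|_{C(\T)}$ unchanged, replaces $\|u_0\|_{C(\T)}\wedge\|v_0\|_{C(\T)}$ by the smaller number $\|u_0\wedge v_0\|_{C(\T)}$ (so the resulting bound is stronger and implies the stated one), and enlarges $\|u(s)-v(s)\|_{C(\T)}$. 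Hence we may assume $u_0\le v_0$, put $D:=v-u\ge0$ and $U:=\|u_0\|_{C(\T)}$, and recall that $\partial_t D=\partial_x^2 D+\mu D+\sigma h_\alpha(u,v)\zeta$ for a space‑time white noise $\zeta$, with $|h_\alpha(u,v)|^2\le D^2+2\alpha uD$; in mild form, writing $I^\zeta_Y$ for the stochastic convolution \eqref{IX} formed with $\zeta$ in place of $\dot W$,
\[
 D(t,x)=(p_t*(v_0-u_0))(x)+\mu\, J_D(t,x)+\sigma\, I^\zeta_{h_\alpha(u,v)}(t,x),
\]
with $J_D$ as in Lemma~\ref{lem:Young}. (If $\|u_0\wedge v_0\|_{C(\T)}=0$ then $\underline u\equiv0$, $h_\alpha=D$, and the argument below simplifies by omitting $\tau_1$.)

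\emph{Localization and the event split.} Put $R:=K+\Lambda$ and let $\tau_1=\inf\{s>0:\|u(s)\|_{C(\T)}\ge 2U\}$, $\tau_2=\inf\{s>0:\|D(s)\|_{C(\T)}\ge R\}$, $\tau=\tau_1\wedge\tau_2$. On $[0,\tau)$ one has $\|u\|_{C(\T)}<2U$ and $\|D\|_{C(\T)}<R$, hence $|h_\alpha(u,v)|^2<R(R+4\alpha U)$, so the predictable fields $\bar h:=h_\alpha(u,v)\mathbb{1}_{\{s<\tau\}}$ and $\bar D:=D\,\mathbb{1}_{\{s<\tau\}}$ are uniformly bounded, with $\|\bar h\|_{L^\infty((0,\infty)\times\T\times\Omega)}\le\sqrt{R(R+4\alpha U)}$ and $\|\bar D\|_{L^\infty}\le R$, and for $r\le\tau$ one has $J_D(r,\cdot)=J_{\bar D}(r,\cdot)$ and $I^\zeta_{h_\alpha(u,v)}(r,\cdot)=I^\zeta_{\bar h}(r,\cdot)$. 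Since $r\mapsto\|D(r)\|_{C(\T)}$ is continuous with $\|D(0)\|_{C(\T)}=\|v_0-u_0\|_{C(\T)}\le K<R$, we have $\{\sup_{0\le s\le t}\|D(s)\|_{C(\T)}\ge R\}=\{\tau_2\le t\}\subseteq\{\tau_1\le t\}\cup\{\tau_2\le t,\,\tau_2<\tau_1\}$. The event $\{\tau_1\le t\}=\{\sup_{(0,t]\times\T}u\ge 2U\}$ is handled by Lemma~\ref{lem:sup:inf} applied to $u$, which solves an equation of the form \eqref{SHE} with $f(z)=\mu z$, $g(z)=\sigma z$, giving $\P\{\tau_1\le t\}\le C_*\exp(-c/\sqrt t)$ for some $c=c(f,g)>0$. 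On $\{\tau_2\le t,\,\tau_2<\tau_1\}$ we have $\tau=\tau_2\le t$ and $\|D(\tau_2)\|_{C(\T)}=R$, so evaluating the mild equation at time $\tau_2$ and using Lemma~\ref{lem:Young} with $k=2$, $T=t\le1$ (which gives $\|J_{\bar D}\|_{C((0,t]\times\T)}\le 3\sqrt2\,R\,t^{3/4}$) yields
\[
 R\le K+3\sqrt2\,|\mu|\,R\,t^{3/4}+|\sigma|\,\|I^\zeta_{\bar h}\|_{C((0,t]\times\T)}.
\]

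\emph{Conclusion via a $t$-dichotomy.} If $t<t_\sharp:=\big(\Lambda/(6\sqrt2\,|\mu|(K+\Lambda))\big)^{4/3}$ (with $t_\sharp:=1$ when $\mu=0$), then $3\sqrt2\,|\mu|\,t^{3/4}\le\Lambda/(2R)$, and the last display forces $|\sigma|\,\|I^\zeta_{\bar h}\|_{C((0,t]\times\T)}\ge\Lambda/2$; Lemma~\ref{lem:IX} then bounds $\P\{\tau_2\le t,\,\tau_2<\tau_1\}$ by $C_*\exp\big(-\Lambda^2/(4C_*\sigma^2\sqrt t\,(K+\Lambda)(K+\Lambda+4\alpha U))\big)$. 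Adding the two contributions and absorbing $C_*,|\mu|,|\sigma|$ and the $4$ in $4\alpha U$ into a constant $A=A(f,g)$ gives the asserted estimate for $t<t_\sharp$. For $t\ge t_\sharp$ the estimate is vacuous once $A$ is large: using $\Lambda^{4/3}\le(K+\Lambda)^{4/3}$ and $K+\Lambda\le K+\Lambda+\alpha U$, one sees that $\Lambda^2/\big(A(K+\Lambda)(K+\Lambda+\alpha U)\sqrt t\big)\le(6\sqrt2\,|\mu|)^{2/3}/A$ there, so the second term on the right‑hand side is $\ge A\exp(-(6\sqrt2\,|\mu|)^{2/3}/A)\ge1$ provided $A\log A\ge(6\sqrt2\,|\mu|)^{2/3}$. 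Undoing the reduction $u_0\le v_0$ as above completes the proof.

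The step I expect to be the main obstacle is the interference of the drift $\mu D$ with the (possibly small) fluctuation budget $\Lambda$: the deterministic and drift parts of $D$ already consume the initial gap $K$, so producing a lower bound of order $\Lambda$ for $\|I^\zeta_{\bar h}\|_{C((0,t]\times\T)}$ forces $t$ to be small relative to $\Lambda/(|\mu|(K+\Lambda))$, and the resolution is precisely the dichotomy above, the estimate being trivially valid in the complementary $t$-range. A secondary, routine point is to verify that $\bar h$ is a bounded predictable field — so that Lemma~\ref{lem:IX} applies verbatim to $I^\zeta_{\bar h}$ — and that passing from $(D,h_\alpha)$ to $(\bar D,\bar h)$ is legitimate on $[0,\tau]$.
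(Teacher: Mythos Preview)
Your proof is correct and follows essentially the same localization scheme as the paper: reduce to ordered data, write the mild equation for $D=v-u$, stop when either $\|D\|_{C(\T)}$ hits $K+\Lambda$ or $\|u\|_{C(\T)}$ doubles, bound the truncated stochastic convolution via Lemma~\ref{lem:IX}, and control the $u$-growth event via Lemma~\ref{lem:sup:inf}. The one substantive difference is the treatment of the drift $\mu D$: the paper disposes of it at the outset by the substitution $(\tilde u,\tilde v)=(\e^{-\mu t}u,\e^{-\mu t}v)$ (inherited from ``the same notation and setup as in the proof of Proposition~\ref{pr:coupling}''), so that the mild equation for $D$ has no deterministic drift term at all, whereas you keep $\mu$ and absorb $|\mu|\,\|J_{\bar D}\|$ into the budget via Lemma~\ref{lem:Young} and a $t$-dichotomy (the bound being vacuous for $t\ge t_\sharp$). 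Both routes are valid; the paper's is slightly cleaner, while yours is more self-contained and avoids checking that the $\mu=0$ reduction interacts correctly with the threshold $K+\Lambda$.
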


\begin{proof}
	We will use the same notation and setup as in the proof 
	of Proposition~\ref{pr:coupling}.   In particular, we assume without loss of generality that 
	$0\le u_{0}  \le v_{0}$. By the comparison theorem (mentioned 
        in the proof of Lemma \ref{lem:dissipation}) the latter implies that
	$u\le v$ almost surely. Define
	\[
		D(t\,,x) = v(t\,,x)- u(t\,,x), \quad D_{0}(x) = D(0\,,x)= v_{0}(x)-u_{0}(x) \ge 0,
	\]
	and let  $\tau_{D} = \inf\{t\ge0:\,\|D(t)\|_{C(\T)} \ge K+\Lambda\}.$
	We aim to estimate
	\[
		\P\{\tau_{D}\le t \} =
		\P\left\{ \sup_{s\in[0,t]}\|D(s)\|_{C(\T)}
		\ge K+\Lambda \right\}.
	\]
	 As in the proof of Proposition \ref{pr:coupling}, we can write
	\[
		D(t\,,x)=(p_{t}*D_{0})(x)+N_D(t\,,x),
	\]
	where
	\[\textstyle
		N_D(t\,,x)=
		\sigma\int_{(0,t)\times\T}  p_{t-s}(y-x)\, 
		h_\alpha \left(u(s\,,y)\,, v(s\,,y)\,, D(s\,,y)\right)\,\zeta(\d s \, \d y).
	\]
	Here $\zeta$ denotes another space--time white noise, and
	\[
		h_\alpha (u\,,v\,,D) = \sqrt{ D^{2}+ \frac{2\alpha u D}{1 +
		\sqrt{1 - \left(\frac{\alpha\,|v-u|}{\,v\,}\right)\wedge1\,}}},
	\] 
	The function $h_\alpha$ is the same function $h_\alpha (u\,,v)$ 
	that appeared earlier in the proof of Proposition~\ref{pr:coupling}; see \eqref{h^2}.
	
	Next we define a truncated version of $N_{D}$.  Let 
	\[\textstyle
		\tilde{h}_\alpha (u\,,v\,,D) = h_\alpha
		\left( u\wedge 2\,\|u_{0}\|_{C(\T)}\,, v\,, D\wedge( K+\Lambda )\right).
	\]
	The function $\tilde{h}$ is bounded by construction.  We may define
	\[\textstyle
		\tilde{N}_D(t\,,x) =\sigma \int_{(0,t)\times\T} 
		p_{t-s}(y-x)\,\tilde{h}_\alpha \left(u(s\,,y)\,,v(s\,,y)\,,D(s\,,y)\right)\,\zeta(\d s,\d y),
	\]
	and observe that $N_D(s\,, x) = \tilde{N}_D(s\,, x)$ for all $s<\tau_D$ a.s.\
	 on the event $\{\tau_{D}\le t ~,~\sup_{0\le s\le t}\|u(s)\|_{C(\T)} \le 2\,\|u_{0}\|_{C(\T)} \}$. 
	 Thus,  we see that a.s.\ on the event 
	$\{\tau_{D}\le t~,~ \sup_{0\le s\le t} \|u(s)\|_{C(\mathbb{T})}\le2\|u_{0}\|_{C(\T)}\}$,  
	\[
		\|D(\tau_{D})\|_{C(\T)} \le \|D_{0}\|_{C(\T)} + 
		\|\tilde{N}_D(\tau_{D})\|_{C(\T)}.
	\]
	However,  $\tau_{D}$ is exactly the first time that
	the left-hand side exceeds $K+\Lambda$ (recall that
	$\|D_0\|_{C(\T)} \le K$).  Thus,  $\|\tilde{N}_D(\tau_{D})\|_{C(\T)} \ge \Lambda,$
	almost surely on the event $\{\tau_{D}\le t~,~\|u(s)\|_{C(\mathbb{T})}\le2\|u_{0}\|_{C(\T)}\}$. 
	It follows that
	\[
		\{\tau_{D}\le t\} \subseteq \left\{\sup_{0\le s\le t}
		\|\tilde{N}_D(s)\|_{C(\T)}\ge\Lambda/2\right\}
		\cup \left\{\sup_{0\le s\le t}\|u(s)\|_{C(\mathbb{T})} >2\|u_{0}\|_{C(\T)}\right\}.
	\]
	We now use Lemma \ref{lem:IX} and Lemma \ref{lem:sup:inf} to 
	bound the probabilities of the events on the right-hand side above. 
	First of all, Lemma \ref{lem:sup:inf} says that there exists a constant 
	$A_1=A_1(f\,, g)>0$ such that 
	\[\textstyle
		\P \left\{ \sup_{0\le s\le t} \|u(s)\|_{C(\T)} 
		\ge2\|u_{0}\|_{C(\T)}\right\} \le A_{1} \exp (-A_1^{-1}/\sqrt{t}).
	\]
	In order to bound the probability that involves $\tilde{N}_D$, we first observe that 
	\[\textstyle
		\left|\tilde{h}_\alpha(u\,,v\,,D)\right|^{2} \le C_{0}
		=\left(K+\Lambda \right) \left( K+\Lambda+4\alpha\,\|u_{0}\|_{C(\T)}\right). 
	\]
	Thus, Lemma \ref{lem:IX} implies that there exists $A_{2} = A_{2}(f\,,g)>0$ such that 
	\[\textstyle
		\P\left\{ \sup_{s\le t}\|\tilde{N}_D(s)\|_{C(\T)}
		\ge \Lambda /2\right\} \leq A_{2}\exp \left( 
		- \Lambda^{2}/[A_{2}\,C_{0}\,\sqrt{t}] \right). 
	\]
	Compile the preceding bounds and relabel the constants to 
complete the proof of Proposition \ref{pr:u-v}.
\end{proof}

\section{Proof of Theorem \ref{th:main}}\label{sec:Pf}
Throughout the proof, let us choose
and fix an arbitrary number
\begin{equation}\label{vareps}
	\varepsilon\in(0\,,\exp(-\e^\e)),
\end{equation}
and define
\begin{equation}\label{delta}
	\delta = \delta(\varepsilon) = [\log\log(1/\varepsilon)]^{-1}.
\end{equation}

Recall \eqref{gamma}, \eqref{tau(T)}, and \eqref{P(tau)},
and observe that
\[
	\P\left\{ \|w(t)\|_{C(\T)} \le \exp(-\gamma t) \quad
	\forall t\ge T_0
	\right\} \ge 1-\varepsilon,
\]
where, for the same non-random constant $L_*=L_*(f,g)>1$ that appeared in \eqref{P(tau)},
\begin{equation}\label{T_0}
	T_0=T_0(f,g\,,\varepsilon) = L_*\log( L_*/\varepsilon)\qquad
	\forall\varepsilon\in(0\,,1).
\end{equation}
We stop to emphasize that $T_0$ is nonrandom. 
With the above in place we define an unbounded sequence
of deterministic numbers $\{T_n\}_{n\in\Z_+}$ as follows: $T_0$ has already been defined. Then, we set
\begin{equation}\label{T_n}\textstyle
	T_{n+1} = T_n +\delta(\log_+n)^{-3}\quad\forall n\in\Z_+.
\end{equation}
The choice of the exponent ``3'' of $\log_+ n$ is not particularly relevant, except that we want
the exponent to be strictly greater than 2.

Of course, $T_1=T_0+\delta$, $T_2=T_0+2\delta$, and
\[\textstyle
	T_{n+1} = T_0 + 2\delta + \delta \sum_{j=2}^n (\log_+ j)^{-3}
	\qquad\text{for all integers $n\ge2$}.
\]
This readily yields that $T_n -T_0 \approx \delta n(\log n)^{-3}$ as $n\to\infty$,
uniformly in $\varepsilon$. In particular,
\begin{equation}\label{T_n:bds}
	T_n -T_0\asymp n\delta (\log_+ n)^{-3}\quad\text{uniformly for all $n\in\N$ and
	$\varepsilon\in(0\,,1)$}.
\end{equation}
The above forms an interval partition of the unbounded time interval
$[T_0\,,\infty)$ as follows:
\begin{equation}\label{I_n}
	I_n = \left[ T_n\,,T_{n+1}\right) \qquad\forall n\in\Z_+.
\end{equation}
The intervals $I_0,I_1,\ldots$ are basically defined by their lengths and the latter are dictated
by a kind of asymptotic regularity of the solution to \eqref{SHE} which, in rough terms, 
ought to ensure that as $n\to\infty$,
\begin{equation}\label{LL0}
	\text{$\log w(t\,,x)\approx \log w(T_n\,,x)$, uniformly for all $t\in I_n$ and $x\in\T$, a.s.}
\end{equation}
And of course the same statement remains valid if we replace $w$ by $u$ everywhere; after all,
\eqref{eq:main} is an example of \eqref{SHE}. Therefore,  as $n\to\infty$, we also have
\begin{equation}\label{LL}
	\text{$\log u(t\,,x)\approx \log u(T_n\,,x)$, uniformly for all $t\in I_n$ and $x\in\T$, a.s.}
\end{equation}
In this way, Theorem \ref{th:main} is reduced to proving that, under the conditions and
notation of Theorem \ref{th:main},  
\[
	\lim_{n\to\infty} \| \log u(T_n) - \log w(T_n) \|_{C(\T)}=0
	\quad\text{with high probability}.
\]
This is more or less the route we take, except we will not prove 
\eqref{LL0} -- hence also \eqref{LL} -- directly. A proper description 
of what we do  requires a little more discussion and can be found in
Item (3), a paragraph above \eqref{A_n} below.

Next, we will use the intervals $\{I_n\}_{n\in\Z_+}$ to define a sequence of parabolic Anderson models,
following the coupling constructions of \S\ref{sec:coupling}.

First, we define a space-time random field $v=\{v(t\,,x)\}_{t\ge T_0,x\in\T}$,
which is defined successively on the intervals $I_0,I_1,I_2,\ldots$ as follows,
$$\left[\begin{split}
	&\partial_t v =\partial_x^2v+  \mu v + \sigma v\dot{W}\qquad\text{on $I_0\times\T$},\\
	&\text{subject to }v(T_0)=w(T_0)\text{ on $\T$}.
\end{split}\right.$$
This defines the process $\{v(t)\}_{t\in[T_0,T_1)}$. Now we proceed inductively
in order to extend the definition of $t\mapsto v(t)$ to all of $[T_0\,,\infty)$:
Having defined $\{v(t)\}_{t\in[T_0,T_n)}$ for some $n\in\Z_+$,
we define the process on the time stage
$I_n$ be defined via
$$\left[\begin{split}
	&\partial_t v =\partial_x^2 v+ \mu v + \sigma v\dot{W}\qquad\text{on $I_n\times\T$},\\
	&\text{subject to }v(T_n)=w(T_n)\text{ on $\T$}.
\end{split}\right.$$

Apply the preceding inductively to complete the description of a random field
$\{v(t\,,x)\}_{t\in[T_0,\infty)\times\T}$. Choose and fix some integer $n\in\Z_+$.
Elementary properties of white noise, and the uniqueness of the solution
to linear SPDEs of the form \eqref{PAM}, together imply that the conditional
law of the process $\{v(t)\}_{t\in I_n}$ given everything by time $T_n$ is the same
as the law of the parabolic Anderson model \eqref{PAM} started at $w(T_n)$
and run for $T_{n+1}-T_n= \delta(\log_+n)^{-3}$ units of time. We pause to
caution however that the process $v$ does not solve a parabolic Anderson model over all time
in $[T_0\,,\infty)$.
In fact, one does not even expect it to be continuous at times $T_1,T_2,\ldots$
since one would informally expect that typically,
\begin{equation}\label{jumps}
	v(T_n) = w(T_n)\neq w(T_{n+1}) = v(T_{n+1}) \qquad\text{a.s.\ $\forall n\in\N$}.
\end{equation} 

The equalities are part of the definition of $v$; the inequality in \eqref{jumps} is the cause of
the potential jump discontinuities. One might in fact expect that the
existence of such discontinuities can be rigorously proved
for many instances of $(f,g)$, using the Malliavin calculus
along the lines of Mueller and Nualart \cite{MN}. 

Recall \eqref{I_n}.
We can think of the random field $v=\{v(t)\}_{t\in[T_0,\infty)}$ as one that is a parabolic Anderson
model in every interval $I_n$ [$n\in\Z_+$] but also tracks the solution $w$ to our SPDE carefully
by making sure that $w$ and $v$ coincide at the left end point $T_n$ of $I_n$ for every $n$. The fact that
$v$ tracks $w$ well in $I_n$ -- and
particularly well when $n\gg1$ -- is a consequence of the fact that the length $|I_n|$ of $I_n$ is
$\delta(\log_+n)^{-3}<1$ for all $n\in\Z_+$, and  especially that $|I_n|\ll1$ when $n\gg1$.

Next, we introduce a coupling of $v$ 
and the solution $u$ to a proper parabolic Anderson model on the time interval $[T_0\,,\infty)$. In order
to do that we will follow the recipe of \S\ref{sec:coupling}. We may enlarge the underlying probability
space, if need be, in order to  
introduce a sequence $\{\dot{W}_n\}_{n\in\Z_+}$ of 
space-time white noises such that $\dot{W},\dot{W}_0,\dot{W}_1,\dot{W}_2,\ldots$
are totally independent.\footnote{Since $\dot{W}$'s are random distributions, we mention
that this independence can be stated rigorously as follows:
The real-valued random variables
$\int\varphi\,\d\dot{W}, \int\varphi_1\,\d\dot{W}_1, \int\varphi_2\,\d\dot{W}_2,\ldots$ -- all defined
as Wiener integrals --
are totally independent for all possible choices of $\varphi,\varphi_n\in L^2(\R_+\times\T)$
$[n\in\N]$.
}
Recall the functions $\Phi$ and $\Psi$ from \eqref{Phi:W};
more precisely, we apply \eqref{Phi:W} with $\alpha=1$  and let $\Phi=\Phi_1$ and $\Psi=\Psi_1$. 
We can now follow \eqref{PAM:v} and define a random field
$\{u(t)\}_{t\in[T_0,T_1]}$ as follows:
$$\left[\begin{split}
	&\textstyle\partial_t u = \partial^2_x u + \mu u + \sigma u\left\{ \Psi\left(\frac{v-u}{u}\right) \dot{W}+
        		\Phi\left(\frac{v-u}{u}\right)\dot{W}_0\right\}\  \text{on $I_0\times\T$}, \\
	&\text{subject to }u(T_0)=v(T_0)\text{ on $\T$}.
\end{split}\right.$$
Because $u$ and $v$ agree at their initial time $T_0$, it follows from the coupling that
$u=v$ on the entire interval $[T_0\,,T_1)=I_0$. However, the preceding
construction will illuminate the remainder of the construction of $u$, which ends up
being different from $v$ in subsequent intervals $I_1,I_2,\ldots .$ From here on 
out we proceed inductively.

Let us suppose that $\{u(t)\}_{t\in I_{n-1}}$ has been constructed for some $n\in\N$,
and let us think of this as the $(n-1)$st stage of the construction of the random field $u$.
We can then extend the definition of $u$ to the time interval $[T_0,T_{n+1})$
by defining the next ($n$th) stage $\{u(t)\}_{t\in I_n}$ of the construction
as the solution to the following:
$$\left[\begin{split}
	&\textstyle\partial_t u = \partial^2_x u + \mu u + \sigma u\left\{ \Psi_n \left(\frac{v-u}{u}\right) \dot{W}+
        		\Phi_n \left(\frac{v-u}{u}\right)\dot{W}_n \right\}\ 
			\text{on $I_n\times\T$}, \\
	&\text{subject to }u(T_n)\text{ being defined in the previous stage everywhere on $\T$},
\end{split}\right.$$
where, for all $y\in\R$,
\begin{equation}\label{Phi:W:n}
\begin{aligned} 
	&\textstyle\Phi_n(y)=\Phi_{\alpha_n}(y) = \sqrt{\alpha_n|y|\wedge 1},\\
	&\textstyle\Psi_n(y)=\Psi_{\alpha_n}(y) = \sqrt{1-|\Phi_n(y)|^2} = \sqrt{1-(\alpha_n|y|\wedge 1)},
\end{aligned}
\end{equation} and where $\alpha_n$ is chosen later.

We have reversed the roles of the notations for $u$ and $v$ in \eqref{PAM:v}, but are 
otherwise following that construction verbatim. The preceding inductively constructs a 
space-time random field $\{u(t\,,x)\}_{(t,x)\in[T_0,\infty)\times\T}$. Because
$u$ is continuous,  regularity results about SPDEs, and their Markov properties, together ensure that
$u$ solves a parabolic Anderson model whose law is the same as that in \eqref{PAM},
except the process starts at time $T_0$ (not $0$) and the initial value is $v(T_0) = w(T_0)$,
which is random but in a non-anticipatory manner.
The random field $\{u(t)\}_{t\in [T_0,\infty)}$ is the one that is announced in Theorem \ref{th:main}.
From here on, the strategy of the proof will be to quantify the following intuitive properties:
\begin{enumerate}
\item Since $v(T_n)=w(T_n)$ and $I_n$ is a small interval for all $n\geq 1$ 
	when $\varepsilon \ll1$, $v\approx w$
	on $I_n$ with high probability for all $n\geq 1$;
\item The coupling of $(v\,,u)$ is successful in $I_n$ with high probability for all
	$n\geq 1$ when $\varepsilon \ll 1$.
	Recall that this means that, with high probability, $u(t)=v(t)$ for some $t\in I_n$.
	By the Markov property, once $u$ and $v$ coincide, then they are equal from that time on. 
	This and the previous step together would imply that with high probability there exists
	$t\in I_n$ such that $u(s)=v(s)$ for all $s\in[t\,,T_{n+1}]$ and in particular
	$u(T_{n+1})=v(T_{n+1}-)$.  Therefore
	if we can show this holds for all $n\geq 1$
	then $w(T_n)$ and $u(T_n)$ are close with high probability for all  $n\ge1$.
\item Finally, we combine the first two steps to show that $u(t)$ is close to 
	$w(t)$ on $I_n$ for all $n\geq 1$. More precisely, from Step (2), 
we can have that, at the discrete times $T_n$,   $u(T_n)$ and $w(T_n)$ are 
close for all $n\geq 1$ with high probability. Since $v(T_n) = w(T_n)$ by 
construction, it follows that $u(T_n)$ and $v(T_n)$ are close for all 
$n\geq 1$. Then, by Proposition \ref{pr:u-v}, we conclude that $u$ and $v$ 
remain close throughout the interval $I_n$. Combining this with Step (1), 
where $v$ is shown to be close to $w$ over the entire interval $I_n$, we 
deduce that $u$ must also remain close to $w$ in each $I_n$ with high 
probability  for all $n \ge 1$. A suitable formulation of this property 
yields \eqref{LL0} and \eqref{LL}. 
\end{enumerate}

We now introduce a sequence of events $\bm{A}_0,\bm{A}_1,\ldots$
that depend on the construction above, as well as a sequence of positive numbers
$\varepsilon_1>\varepsilon_2>\cdots>0$ whose numerical
values can be found in \eqref{epsilon_n} below.
It is perhaps worth mentioning that $\{\varepsilon_n\}_{n=1}^\infty$ is a sequence of 
numbers that depends on the number $\varepsilon\in(0\,,\e^{-\e})$ that has been 
already chosen and fixed  in \eqref{vareps}. With the sequence $\{\varepsilon_n\}_{n=1}^\infty$
in place, we define
\begin{align}
	\bm{A}_0 &= \left\{ \omega\in\Omega:\ \tau(T_0)(\omega)=\infty  \right\},
		\label{A_n}\\\nonumber
	\bm{A}_n &= \left\{ \omega\in\Omega:\ \| w(T_n)-u(T_n)\|_{C(\T)}(\omega) \le
		\varepsilon_n \| w(T_n)\|_{C(\T)}(\omega) \right\}\quad
		\forall n\in\N.
\end{align}
The strategy of our proof of Theorem \ref{th:main}
is to establish a lower bound for probabilities of the form,
\begin{equation}\label{P_N}
	\mathcal{P}_N = \P(  \bm{A}_1 \cap \cdots\cap
	\bm{A}_N \mid \bm{A}_0) 
	\qquad\forall N\in\N,
\end{equation}
in order to show that the above can be made to be close to one when $N\gg1$
and $\varepsilon\ll1$,
as long as the various parameters of the construction are selected judiciously.
Thanks to \eqref{T_0} and \eqref{P(tau)},
\begin{equation}\label{P(A_0)}
	\P(\bm{A}_0) \ge 1 - \varepsilon,
\end{equation}
where $\varepsilon$ was fixed in \eqref{vareps}.
This shows that the conditioning in \eqref{P_N} is ``soft''
in the sense that we are conditioning on a set of very large measure.

In order to estimate $\mathcal{P}_N$ from below we write
\[\textstyle
	1-\mathcal{P}_N
	= \P\big( \bm{A}_1^\mathsf{c}\mid\bm{A}_0\big)+
	 \sum_{k=2}^N \P\big( \bm{A}_1 \cap\cdots\cap\bm{A}_{k-1}\cap
	\bm{A}_k^\mathsf{c}  \mid \bm{A}_0\big),
\]
which is another way to state that, in order for the event
$\bm{A}_1\cap\cdots\cap \bm{A}_N$ to fail to hold,
there must be a first index $k\in\{1,\ldots,N\}$ such that
$\bm{A}_k$ fails to hold.
Consequently, \eqref{P(A_0)} yields the following bound, which in turn will determine the
strategy of the remaining portion of the proof of Theorem \ref{th:main}:
\begin{equation}\label{aim}\textstyle
	1-\mathcal{P}_N
	\le (1-\varepsilon)^{-1}\left[\P\big( \bm{A}_0\cap \bm{A}_1^\mathsf{c} \big)+
	 \sum_{k=2}^N \P\big( \cap_{j=0}^{k-1}\bm{A}_j\cap
	\bm{A}_k^\mathsf{c}\big)\right].
\end{equation}
The quantity inside the square brackets is a sum of $N$ terms.
We will estimate the first two terms first -- these are stages 1 and 2 of the argument
-- and then construct
an inductive argument for estimating the remainder from the first two.

\subsection{The first stage}
The task of this first stage of the proof is to
bound the first probability term $\P( \bm{A}_0\cap \bm{A}_1^\mathsf{c} ) $
that appears on the right-hand side of \eqref{aim}. For the remainder of 
this paper, we will use successive conditioning
and the Markov property for the particular coupling construction of this paper. Therefore,
we enhance the original filtration $\mathscr{F}$ of the noise by adding to $\mathscr{F}(t)$
the sigma-algebras generated by $\int_{[0,s]\times\T]}\varphi(y)\,W_j(\d r\,\d y)$
as $(s\,,\varphi\,,j)$ roams over $[0\,,t]\times L^2(\T)\times\Z_+$. We continue to denote
the resulting enlarged filtration by $\mathscr{F}(t)$ as this should not cause 
confusion thanks to the independence of $W,W_0,W_1,\ldots$.

We may appeal to Proposition \ref{pr:tracking} to see that for every $k>6$ there exists $L=L(k\,,f,g)>0$,
and independent of the value of $\varepsilon$, such
that
\begin{equation}\label{Mk:T0}\textstyle
	\E_{\mathscr{F}(T_0)}\left( \| w_{T_0} - u_{T_0}\|_{C(I_0\times\T)}^k \right)
	\le L \left[\mathscr{E}\left(\e^{-\gamma T_0}\right)\right]^k  \|w(T_0)\|_{C(\T)}^k,
\end{equation}
a.s.\ on $\{\tau(T_0)>T_0\}$. It might help to recall that
$\mathscr{E}$ was defined in \eqref{R:star}. 
Thus, we have that  a.s.\ on $\{\tau(T_0)>T_0\}$
\begin{align}\nonumber
	&\textstyle\P_{\mathscr{F}(T_0)} \left\{ \| w(T_1) - u(T_1) \|_{C(\T)} \ge  
		\varepsilon_1 \|w(T_1)\|_{C(\T)}  \text{ and }
		\tau(T_0) =\infty\right\}\\\nonumber
	&\textstyle\le \P_{\mathscr{F}(T_0)}\left\{ \| w_{T_0}(T_1) - u_{T_0}(T_1) \|_{C(\T)} 
		\ge \varepsilon_1\,	\inf_{T_0 \leq s \leq T_1} \inf_{z\in \T} w(s\,, z) \right\} \\
	&\textstyle\le \P_{\mathscr{F}(T_0)}\left\{ \| w_{T_0}(T_1) - u_{T_0}(T_1) \|_{C(\T)} 
		\ge \frac{\varepsilon_1}{2} \inf_{z\in \T} w(T_0\,, z) \right\} \label{Stage1}\\ \nonumber
	&\textstyle\qquad \quad  + \P_{\mathscr{F}(T_0)}\left\{    
		\inf_{T_0\leq s\leq T_1} \inf_{z\in \T} w(s\,, z) \leq 
		\frac12 \inf_{z\in \T} w(T_0\,, z) \right\} \nonumber \\\nonumber
	&\textstyle\le L \left( \frac{2}{\varepsilon_1} \mathscr{E}\left(\e^{-\gamma T_0}  \right) 
		\|w(T_0)\|_{C(\T)} /  \inf_{z\in \T} w(T_0\,, z) \right)^k
		+ A\e^{- A^{-1}/\sqrt{T_1 - T_0}},
\end{align}
where we have used \eqref{Mk:T0} (yielding the constant $L=L(k\,,f, g)$) 
and Lemma~\ref{lem:sup:inf} (yielding the constant $A=A(f,g)$).

In order to control the ratio of the supremum and infimum, 
we  define an event $\tilde{\bm{A}}_0$ as 
\[ 
	\tilde{\bm{A}}_0 : = \left\{ \omega \in \Omega  :\ 
	\| w(T_0)\|_{C(\T)}    \leq \eta_0^{-1} \inf_{z\in \T} w(T_0\,, z) \right\},
\]
where $\eta_0=|\log\delta|^{-1}$.  We now obtain that for every $k>6$ and $\ell>2$, 
\begin{equation}
\begin{split}
	\P&\big( \bm{A}_0\cap\bm{A}_1^\mathsf{c}\big) 
		= \E\left[\P_{\mathscr{F}(T_0)} \big( \bm{A}_1^\mathsf{c}\big)
		;\ \bm{A}_0\right] \\
	&\leq \E\left[\P_{\mathscr{F}(T_0)} \big( \bm{A}_1^\mathsf{c}\big)
		;\ \bm{A}_0\cap \tilde{\bm{A_0}}\right]  + 
		\P\big( \tilde{\bm{A}}_0^\mathsf{c} \big) 
		\label{P(w(T_1)}\\ 
	&\le L \left[ \varepsilon_1^{-1}
		\eta_0^{-1} 2\mathscr{E}\left(\e^{-\gamma T_0}\right)
		\right]^k +  A\exp\left( - A^{-1}/\sqrt{T_1 - T_0} \right) + K \eta_0^\ell, 
\end{split}
\end{equation}
 where we have used \eqref{Stage1} and  Proposition \ref{pr:oscillation} (yielding the universal constant $K>0$).  This concludes our estimate for the first term on the right-hand side of \eqref{aim}.

\subsection{The second stage}
In the second stage of the argument we complement the estimate \eqref{P(w(T_1)}
by bounding the second probability 
$\P( \bm{A}_0\cap\bm{A}_1\cap\bm{A}_2^\mathsf{c})$ that appears on the
right-hand side of \eqref{aim}. That estimate is done in a similar manner to the one in the previous
stage, except it is complicated by the fact that $u\not\equiv v$ on $I_1$, in contrast 
to the fact that $u=v$ on $I_0$ which simplified the analysis in the first stage. This complication
manifests itself right away, as the analogue of \eqref{Mk:T0} is only the following, 
because $w(T_1)$ is equal to $v(T_1)$ and not $u(T_1)$:
By Proposition \ref{pr:tracking}, for every $k>6$ there exists $L=L(k\,,f,g)>0$,
and independent of the value of $\varepsilon$, such
that 
\[
	\E_{\mathscr{F}(T_1)}\left( \| w_{T_1} - v_{T_1}\|_{C(I_1\times\T)}^k \right)
	\le L \left[\mathscr{E}\left(\e^{-\gamma T_1}\right)\right]^k  \|w(T_1)\|_{C(\T)}^k,
\]
a.s.\ on $\{\tau(T_1)>T_1\}$. Therefore, Chebyshev's inequality and 
Lemma \ref{lem:sup:inf} yield the following: There exists a constant $L=L(f, g)>0$ and $A=A(f, g)>0$ such that   
\begin{align}\nonumber
	&\P_{\mathscr{F}(T_1)}\left\{ \| w(T_2) - v(T_2-) \|_{C(\T)}
		\ge  \varepsilon_2\|w(T_2)\|_{C(\T)}
		\text{ and }\tau(T_1)=\infty\right\}\\\nonumber
	&\le \P_{\mathscr{F}(T_1)}\left\{ \| w_{T_1} - v_{T_1}\|_{C(I_1\times\T)}
		\ge \varepsilon_2\|w(T_2)\|_{C(\T)} \right\}
		 \\
	&\textstyle\le\P_{\mathscr{F}(T_1)}\left\{ \| w_{T_1} - v_{T_1}\|_{C(I_1\times\T)}
		\ge \varepsilon_2  \inf_{T_1\leq s\leq T_2} \inf_{z\in \T} w(s\,, z) \right\}
		 \label{pre-Stage2}\\\nonumber
	&\textstyle\le\P_{\mathscr{F}(T_1)}\left\{ \| w_{T_1} - v_{T_1}\|_{C(I_1\times\T)}
		\ge \frac{\varepsilon_2}{2}  \inf_{z\in \T} w(T_1\,, z) \right\}\\\nonumber
	&\textstyle \qquad \quad  + \P_{\mathscr{F}(T_1)}\left\{
		\inf_{T_1\leq s\leq T_2} \inf_{z\in \T} w(s\,, z) \leq 
		\frac12 \inf_{z\in \T} w(T_1\,, z)\right\}\\\nonumber
	&\le L \left( \frac{2}{\varepsilon_2}
		\mathscr{E}(\e^{-\gamma T_1}) \frac{\|w(T_1)\|_{C(\T)}}{%
		\inf_{z\in \T} w(T_1\,, z)} \right)^k
		+ A\exp\left( -\frac{1}{A\sqrt{T_2 - T_1}} \right),
\end{align} 
a.s.\ on $\{\tau(T_1)>T_1\}$.  The preceding is an appropriate 
version of a conditional analogue of \eqref{Stage1}.

Note that we may not write $u_{T_1}$ in place of
$v_{T_1}$ now, since $u(T_1)$ need not be equal to $w(T_1)$ [see the requirements of
Proposition \ref{pr:tracking}]. We can now address this using the coupling estimate 
in Proposition \ref{pr:coupling:1}. Indeed, that result and the Markov property
together imply that  we can find a non-random number 
$A=A(f\,,g)>1$ such that, almost surely, 
\begin{align*}
	&\P_{\mathscr{F}(T_1)}\left\{ u \neq v\text{ everywhere in $I_1$}\right\}\\
	&\le A\left( \frac{\|u(T_1)-v(T_1)\|_{C(\T)}}{\alpha_1 (T_2-T_1)  \inf_{z\in \T} v(T_1\,,z)}
		\right)^{1/2} + A\exp\left( - \frac{1}{A\sqrt{T_2-T_1}}\right)\\
	&= A\left( \frac{\|u(T_1)-w(T_1)\|_{C(\T)}}{\alpha_1 (T_2-T_1) \inf_{z\in \T} w(T_1\,, z) }
		\right)^{1/2} + A\exp\left( - \frac{1}{A\sqrt{T_2-T_1}}\right).
\end{align*}
Therefore, almost surely on $\bm{A}_1$,
\begin{align*}
	&\P_{\mathscr{F}(T_1)}\left\{ u \neq v\text{ everywhere in $I_1$}\right\} \\
	&\quad 	\le A\left( \frac{\varepsilon_1 \|w(T_1)\|_{C(\T)}  }{
		\alpha_1( T_2-T_1 ) \inf_{z\in \T} w(T_1\,, z)}\right)^{1/2} 
		+ A\exp\left( - \frac{1}{A\sqrt{T_2-T_1}}\right).
\end{align*}
Recall that once $u(t)=v(t)$ for some time $t\in I_1$, 
then $u(s)=v(s)$ for all $s\in[t\,,\infty)\cap I_1 = [t\,,T_2)$,
and in particular $u(T_2)=v(T_2-)$.
Consequently, almost surely on $\bm{A}_1$, 
\begin{align}\label{pre:Stage2:1}
	& \P_{\mathscr{F}(T_1)}\left\{ u(T_2) \neq v(T_2-)\right\}   \\\nonumber
	&\quad 	\le A\left( \frac{\varepsilon_1 \|w(T_1)\|_{C(\T)}  }{
		\alpha_1(T_2-T_1) \inf_{z\in \T} w(T_1\,, z)}\right)^{1/2}
		+ A\exp\left( -\frac{1}{A\sqrt{T_2-T_1}}\right).
\end{align} 
Because $\tau(T_1)=\infty$ a.s.\ on $\bm{A}_0$, 
we can deduce from \eqref{pre-Stage2} that,
almost surely on $\bm{A}_0\cap\bm{A}_1$,
\begin{align} \nonumber
	&\P_{\mathscr{F}(T_1)} \left( \bm{A}_2^\mathsf{c}\cap\{\tau(T_1)=\infty\}\right)\\\nonumber
	&=\P_{\mathscr{F}(T_1)}\left\{ \| w(T_2) - u(T_2) \|_{C(\T)}
		\ge \varepsilon_2\|w(T_2)\|_{C(\T)} 
		\text{ and }\tau(T_1)=\infty\right\}\\\nonumber
	&\leq \P_{\mathscr{F}(T_1)}\left\{ \| w_{T_1}(T_2) - v_{T_1} (T_2-) \|_{C(\T)}
		\ge \varepsilon_2\|w(T_2)\|_{C(\T)} \right\} \\\nonumber
        &\qquad\qquad +  \P_{\mathscr{F}(T_1)}
		\left\{ u(T_2) \neq v(T_2-)\right\}  \\\nonumber
	&\le L \left(\frac{2\mathscr{E}\left(\e^{-\gamma T_1}\right)
		\|w(T_1)\|_{C(\T)} }{\varepsilon_2 \inf_{z\in \T} w(T_1\,, z)  }\right)^k
		+A\left( \frac{\varepsilon_1 \|w(T_1)\|_{C(\T)}  }{\alpha_1(T_2-T_1)
		\inf_{z\in \T} w(T_1\,, z)}\right)^{1/2} \\
	&\qquad \qquad + 2A\exp\left( - \frac{1}{A\sqrt{T_2-T_1}}\right).\label{eq:A2_c}
\end{align} 
As in the first stage, we control the ratio of the supremum to the infimum 
by defining the event
\[\textstyle
	\tilde{\bm{A}}_1   =  \left\{ \omega \in \Omega : 
	\| w(T_1)\|_{C(\T)} \le\eta_1^{-1} \inf_{z\in \T} w(T_1, z) \right\},
\]
where $\eta_1=\eta_0=1/\log(1/\delta)$.  We then proceed exactly as in the first stage. That is, we now take expectations
of the preceding to find the following bounds for the unconditional probability of Stage 2: For every $k>6$ and $\ell>2$, 
\begin{align}\nonumber
	&\P\big( \bm{A}_0\cap \bm{A}_1\cap\bm{A}_2^\mathsf{c}\big)
		=\E\left[\P_{\mathscr{F}(T_1)} \left( \bm{A}_2^\mathsf{c}\cap\{\tau(T_1)=\infty\}\right)
		;\ \bm{A}_0\cap\bm{A}_1\right]   \\
	&\leq \E 	\left[\P_{\mathscr{F}(T_1)} \left( \bm{A}_2^\mathsf{c}\cap\{\tau(T_1)=\infty\}\right)
		;\ \bm{A}_0\cap\bm{A}_1\cap \tilde{\bm{A}_1}\right]
		+ \P\big( \tilde{\bm{A}_1}^\mathsf{c} \big) \label{Stage2} \\\nonumber
	&\le L \left(\frac{2\mathscr{E}\left(\e^{-\gamma T_1}\right)  }{%
		\varepsilon_2 \eta_1 }\right)^k  +A\left( \frac{\varepsilon_1 }{%
		\alpha_1 \eta_1 (T_2-T_1)}\right)^{1/2} +{ 2A
		\e^{-A^{-1}/\sqrt{T_2 - T_1}} + K \eta_1^\ell,}
\end{align}
where the last inequality comes from  \eqref{eq:A2_c} and Proposition \ref{pr:oscillation} 
(that yields the universal constant $K>0$ above). This concludes Stage 2.

\subsection{The inductive stage}
The probability estimates for stages $n\ge 3$ are derived inductively, 
each step being similar to the previous stage's estimates
and follow essentially exactly the same route. Therefore we outline the arguments only.
Choose and fix an arbitrary integer $n\ge 2$. The same argument that led to \eqref{pre-Stage2} 
yields the following: For every $k>6$ there exists $L=L(k,f,g)>0$ such that 
\begin{align*}
	&\P_{\mathscr{F}(T_n)}\left\{ \| w(T_{n+1}) - v(T_{n+1}-) \|_{C(\T)}
		\ge  \varepsilon_{n+1}\|w(T_{n+1})\|_{C(\T)}
		\text{ and }\tau(T_n)=\infty\right\}\\\nonumber
	&\le L \left(\frac{2\mathscr{E}\left(\e^{-\gamma T_n}\right)
		\|w(T_n)\|_{C(\T)} }{\varepsilon_{n+1} \inf_{z\in \T} w(T_n\,, z)  }\right)^k
		+ A\exp\left( -\frac{1}{A\sqrt{T_{n+1} - T_n}} \right) ,\label{pre-Stage2}
\end{align*} 
a.s.\ on $\{\tau(T_n)>T_n\}$. We pause to emphasize that,
among other things, $L$ and $A$ do  not depend on the index $n$ as they are 
the same constant that appeared in Proposition \ref{pr:tracking} and Lemma \ref{lem:sup:inf}.

Next, we follow the derivation of \eqref{pre:Stage2:1} and adapt it to the 
present setting in order to find that  we can find a non-random number 
$A=A(f\,,g)>1$ such that
\begin{align*}
	&\P_{\mathscr{F}(T_n)}\left\{ u(T_{n+1}) \neq v(T_{n+1}-)\right\} \\
	&\quad 	\le A\left( \frac{\varepsilon_n \|w(T_n)\|_{C(\T)}  }{%
		\alpha_n(T_{n+1}-T_n) \inf_{z\in \T} w(T_n\,, z)}\right)^{1/2} 
		+ A\exp\left( -\frac{1}{A\sqrt{T_{n+1}-T_n}}\right).
\end{align*}
valid almost surely on $\bm{A}_0\cap\bm{A}_{n-1}$. Once again, the constant
$A$ does not depend on the index $n$. 

As in the earlier stages, we must also control 
$\| w(T_n)\|_{C(\T)}/ \inf_{z\in\T} w(T_n,z)$.  To do this, for any fixed constant $\eta>0$, we define 
\begin{equation}\label{eta_n}
	\eta_n= n^{-\eta}|\log \delta|^{-1}  \ \ \forall n\in\N,
\end{equation}
and 
\[\textstyle
	\tilde{\bm{A}}_n \;=\; 
	\left\{ \omega \in \Omega :
	\| w(T_n)\|_{C(\T)} (\omega)\le \eta_n^{-1} \inf_{z\in \T} w(T_n,z) (\omega)
	\right\}. 
\]
  We now combine the preceding two estimates in the same manner as was done in
\eqref{Stage2} in order to see that for every $k> 6$ and $\ell>2$,  
\begin{equation*}
\begin{split}
	&\P\big( \bm{A}_0\cap\bm{A}_1\cap\cdots\cap\bm{A}_n\cap\bm{A}_{n+1}^\mathsf{c}\big)
		\le \P\big( \bm{A}_0\cap\bm{A}_n\cap\tilde{\bm{A}}_n\cap
		\bm{A}_{n+1}^\mathsf{c}\big) + \P\big( \tilde{\bm{A}}_n^\mathsf{c} \big)  \\
		\nonumber
	&\le L \left(\frac{2\mathscr{E}\left(\e^{-\gamma T_n}\right)}{
		\varepsilon_{n+1} \, \eta_n }\right)^k  +
		A\left( \frac{\varepsilon_n }{\alpha_n \, \eta_n\,  (T_{n+1}-T_n)}\right)^{1/2}\\
		& \qquad +2A\e^{-1/[A\sqrt{T_{n+1} - T_n}]} + K \eta_n^\ell. 
\end{split}
\end{equation*}
Since the first term, which has the form $L(\,\cdots)^k$, is a probability upper bound it could
be replaced with the minimum of the same quantity and one at no cost. This yields the 
following bound:
\[\textstyle
	\P\big( \bm{A}_0\cap\bm{A}_1\cap\cdots\cap\bm{A}_n\cap\bm{A}_{n+1}^\mathsf{c}\big)
	\lesssim \sum_{i=1}^4 Q_i(n),
\]
where
\begin{align*}
	& Q_1(n)= 1\wedge \left(\frac{%
		\mathscr{E}\left(\e^{-\gamma T_n}\right)}{\varepsilon_{n+1}\, \eta_n}\right)^k,
	&& Q_2(n)=\left( \frac{\varepsilon_n}{\alpha_n \, \eta_n\, (T_{n+1}-T_{n})}\right)^{1/2},\\
	& Q_3(n)=\e^{-B/\sqrt{T_n-T_{n-1}}},
        && Q_4(n)= \eta_n^\ell,
\end{align*}
and $B=1/A\in(0\,,1)$ depends only on $(f,g)$ and the implied constant depends only 
on $(k,f,g)$. We pause to emphasize that neither the implied constants, nor the
constants $\gamma, \ell , B$, depend on the particular choices
for the sequences $\{T_n\}_{n=0}^\infty\,, \{\varepsilon_n\}_{n=0}^\infty\,, \{\eta_n\}_{n=0}^\infty$, and  $\{\alpha_n\}_{n=0}^\infty$.
In particular, those constants do not depend on $\varepsilon$; see also 
\eqref{T_0} and \eqref{delta}.

We plan to estimate $\sum_{n=1}^\infty Q_i(n)$, in this order, for $i=4, 3, 2, 1$
saving the more computationally intensive bounds for last. 

The easiest quantity to estimate is $\sum_{n=0}^\infty Q_4(n)$. 
Given $\eta>0$, we choose  $\ell \ge 2$ such that $\eta\ell > 1$. Thus, we see that
\begin{align}\nonumber\textstyle
	\sum_{n=0}^\infty Q_4(n)&\textstyle
		= \sum_{n=0}^\infty \eta_n^\ell = \left|  \log(1/\delta)
		\right|^{-\ell}\left( 1+  \sum_{n=1}^\infty n^{-\eta \ell} \right)\\
	&\lesssim \left[\log\log\log(1/\varepsilon)\right]^{-\ell};\label{T4}
\end{align}
see \eqref{delta}. We observe that the implied constants do not depend on the
parameter $\varepsilon\in(0\,,\exp(-\e^\e))$, and here $\ell$ 
is fixed once $\eta>0$ is chosen, although we will only specify $\eta$ 
later when estimating $\sum_{n=1}^\infty Q_1(n)$.

Next we notice that $\sum_{n=1}^\infty Q_3(n)$ is equal to
\begin{equation}\label{T3}\textstyle
	\sum_{n=1}^\infty\exp\left( - B(\log_+ n)^{3/2}/\sqrt\delta \right)
	\lesssim \delta^{1/3}=\left( \log|\log\varepsilon|\right)^{-1/3},
\end{equation}
uniformly for all $\varepsilon\in(0\,,\exp(-\e^\e))$.
See Lemma \ref{lem:E} of the appendix for an explanation of
the inequality ``$\lesssim\delta^{1/3}$,'' see \eqref{vareps} for 
the choice of the fixed constant $\varepsilon$, and \eqref{delta} for the 
final identity.

So far, the particular constructions of the sequences 
$\{\alpha_n\}_{n=1}^\infty$  and $\{\varepsilon_n\}_{n=1}^\infty$ have not been relevant. In order to properly estimate $\sum_{n=1}^\infty Q_2(n)$
we now make explicit our sequences $\{\alpha_n\}_{n=1}^\infty$  and 
$\{\varepsilon_n\}_{n=1}^\infty$ as follows: For the same
fixed constant $\varepsilon$ as in \eqref{vareps},
\begin{equation}\label{alpha_n}
	\alpha_n = \eta_n =
	n^{-\eta}|\log\delta|^{-1} = n^{-\eta}|\log\log\log(1/\varepsilon)|^{-1}, 
\end{equation}and 
\begin{equation}\label{epsilon_n}
	\varepsilon_n = \frac{\delta n^{-2-4\eta}}{|\log\delta|^4}
	= \frac{n^{-2-4\eta}}{\log\log(1/\varepsilon) \cdot
	|\log\log\log(1/\varepsilon)|^4};
\end{equation} 
see \eqref{delta} and \eqref{eta_n}. For this particular construction of $\{\varepsilon_n\}_{n=1}^\infty$ and $\{\alpha_n\}_{n=1}^\infty$,
we have [by \eqref{T_n}],
\begin{equation}\label{T2}\begin{split}\textstyle
	 \sum_{n=1}^\infty Q_2(n)
	 	&\textstyle\lesssim | \log\delta|^{-1}\sum_{n=1}^\infty n^{-(1+\eta)}
		(\log_+ n)^{3/2} \lesssim  |\log\delta|^{-1}\\
	& = \left[\log\log\log(1/\varepsilon)\right]^{-1}. 
\end{split}\end{equation}
Note that the implied constant
does not depend on $\varepsilon\in\left(0\,,\exp\left(-\e^\e\right)\right)$.

Finally, we analyze $\sum_{n=1}^\infty Q_1(n)$ using the same
sequences $\{\varepsilon_n\}_{n=1}^\infty$ and $\{\eta_n\}_{n=1}^\infty$  that have been defined,
and fixed, in \eqref{epsilon_n} and \eqref{eta_n}:
\[\textstyle
	\sum_{n=1}^\infty Q_1(n)
	\lesssim\sum_{n=1}^\infty \left[1\wedge \delta^{-1}
	|\log\delta|^5 \, n^{2+5\eta}\, \mathscr{E}\left(\e^{-\gamma T_n}\right) \right]^k,
\]
where the implied constant does not depend on the parameter $\varepsilon$
that was fixed in \eqref{vareps}. The estimation of
$\sum_{n=1}^\infty Q_1(n)$ is straightforward but requires a little bit of work, therefore we do the work
slowly to identify parameter dependencies of interest.

Thanks to \eqref{T_0} and \eqref{T_n:bds}, for every $\rho\in(0\,,1)$
we can find a real number $c>0$ such that
\[
	\e^{-\gamma T_n} \le \exp\left( -\gamma L_*\log( L_*/\varepsilon)
	- c\gamma \delta n (\log_+ n)^{-3}\right)\lesssim \varepsilon^{\gamma L_*}
	\wedge \exp\left( - c\gamma \delta n^\rho\right),
\]
where we note that $c$ depends only on $\rho$ and nothing else. The monotonicity of
the mapping $\mathscr{E}$ -- see \eqref{R:star} -- yields
\[
	\mathscr{E}\left( \e^{-\gamma T_n}\right) \le 
	\mathscr{E}\left( c_1\varepsilon^{\gamma L_*}\right)\wedge
	\mathscr{E}\big( c_1\e^{-c_2 \delta n^\rho }\big)\lesssim
	|\log\varepsilon|^{-\chi}\wedge (\delta n^\rho)^{-\chi},
\]
where $\chi$ comes from part (3) of Assumption \ref{ass:par}. In this way we find that
\begin{align*}\textstyle
	\sum_{n=1}^\infty Q_1(n)
		&\textstyle\lesssim \sum_{n=1}^\infty \left[1\wedge \delta^{-1}
			|\log\delta|^5 \, n^{2+5\eta}\,\left\{|\log\varepsilon|^{-\chi}
			\wedge (\delta n^\rho)^{-\chi}\right\} \right]^k\\
&=R_1+R_2,
\end{align*}
where 
\[\textstyle
	R_1=\sum_{n\le \left(|\log\varepsilon|/\delta\right)^{1/\rho}}
	\left( \delta^{-1} |\log\delta|^5 |\log\varepsilon|^{-\chi}\, 
	n^{2+5\eta }\right)^k 
\]
and
\[\textstyle
	R_2= \sum_{n>\left(|\log\varepsilon|/\delta\right)^{1/\rho}}
	\left( \delta^{-(1+\chi)} |\log\delta|^5 \,
	n^{-\rho\chi + 2+5\eta} \right)^k.
\]
We first consider $R_1$: 	
\begin{equation}\label{R_1}
\begin{split}
	R_1 &\textstyle\le \left( \delta^{-1}|\log\delta|^5 /|\log\varepsilon|^\chi
		\right)^k \sum_{n\le\left(|\log\varepsilon|/\delta\right)^{1/\rho}}
		 n^{k(2+5\eta)}  \\
	& \lesssim\left( \delta^{-1} |\log\delta|^5 /
		|\log\varepsilon|^\chi \right)^k \left( 
		|\log\varepsilon|/\delta \right)^{(1+k(2+5\eta))/\rho} \\
	& = \delta^{ -(1+ k(2+5\eta+\rho))/\rho} |\log\delta|^{5k} 
		|\log\varepsilon|^{-(k(\rho\chi- 2-5\eta)-1)/\rho}. 
\end{split}
\end{equation}
Before we consider $R_2$, recall that we can choose $\rho\in(0\,,1)$ arbitrarily close to 1.
Since $\chi > 2$ (see the part (3) of Assumption \ref{ass:par}), 
we can choose $\eta \in (0\,,1)$ to be as small as we would like.
Hence, we may choose $\rho$ sufficiently close to 1 and $\eta$ sufficiently small
in order to ensure that
\begin{equation}\label{rho-chi}
  \rho \chi \;>\; 2 + 5\eta.
\end{equation} 
As regards $R_2$, we have 
\begin{equation}\label{R_2}
\begin{split}
	R_2 &\textstyle\le  \sum_{n>\left(|\log\varepsilon|/\delta\right)^{1/\rho}}
		\left(\delta^{-(1+\chi)} |\log\delta|^5 
		n^{-\rho\chi + 2+5\eta} \right)^k  \\
		 &\lesssim\left( \delta^{-(1+\chi)} |\log\delta|^5
		\right)^k \left( |\log\varepsilon|/\delta\right)^{(1-k(\rho\chi- 2-5\eta))/\rho},  \\
	& = \delta^{ -(1+ k(2+5\eta+\rho))/\rho}
		|\log\delta|^{5k} |\log\varepsilon|^{(1-k(\rho\chi- 2-5\eta))/\rho}. 
\end{split}
\end{equation}
In \eqref{R_1} and \eqref{R_2},  the implied constants do  not depend 
on the parameter $\varepsilon$ from the range given in \eqref{vareps}. By further insisting that  $\rho\chi > 2 + 5\eta$ --
see \eqref{rho-chi} -- and then choosing sufficiently large $k>6$, we can ensure that 
\[
 	k(\rho\chi- 2-5\eta) > 1.
\]
Because there exists $\chi>2$ such that
$\sup_{|z|\le1}  | \log z |^\chi\mathscr{E}(z)$ is finite
-- see Assumption \ref{ass:par} --  it follows fairly easily
that there exists $q>0$ such that
\begin{equation}\label{T1}
	\sum_{n=1}^\infty Q_1(n)
	\lesssim |\log\varepsilon|^{-q}.
\end{equation}
Indeed, because of the definition of $\delta$ -- see \eqref{delta} --
the quantity $\log(1/\delta)$ and $1/\delta$ does not contribute in a critical manner
and the bulk of the behavior of the preceding is governed by its dependence
on $\varepsilon$ through $\log(1/\varepsilon)$, to leading term. 
One could estimate $q$ explicitly of course, but this is not germane since
we can combine \eqref{T2}, \eqref{T4}, \eqref{T3}, and \eqref{T1} in order to find that
the worst error comes from \eqref{T2} since $\ell \ge 2$:
\[\textstyle
	\sum_{n=1}^\infty\P\left( \bm{A}_0\cap\bm{A}_1
	\cap\cdots\cap\bm{A}_n\cap\bm{A}_{n+1}^\mathsf{c}\right)
	\lesssim \left[\log\log\log(1/\varepsilon)\right]^{-1}.
\]
It follows readily from \eqref{P(w(T_1)} that
$\P(\bm{A}_0\cap\bm{A}_1^\mathsf{c})\lesssim
[\log\log\log(1/\varepsilon)]^{-1}$ as well, with room to spare. Therefore, we may conclude from
\eqref{aim} that
\[
	1-\lim_{N\to\infty}\mathcal{P}_N\lesssim 
	\left[ \log\log\log(1/\varepsilon)\right]^{-1},
\]
uniformly for all $\varepsilon\in(0\,,\exp(-\e^\e))$. Thanks to \eqref{A_n} and \eqref{P_N},
this implies that
\begin{equation*}
\begin{split}
	\P&\left(\left.  \|w(T_n)-u(T_n)\|_{C(\T)} / \|w(T_n)\|_{C(\T)}
	= O(\varepsilon_n)\text{ as $n\to\infty$} \ \right|\, \bm{A}_0\right) \\
   &\hspace{4cm} \ge 
	1 -  \text{\rm const} \left[ \log\log\log(1/\varepsilon)\right]^{-1},
\end{split}
\end{equation*}
where ``const'' does not depend on the parameter $\varepsilon$ that was fixed in \eqref{vareps}.
Let $\bm{B}$ denote the event whose conditional probability was just estimated. Clearly,
\[
	\P(\bm{B}\mid\bm{A}_0) \le 
	\P(\bm{B}) / \P(\bm{A}_0)
	\le (1-\varepsilon)^{-1} \P(\bm{B}) \le \P(\bm{B})/[1-\exp(-\e^\e)];
\]
see \eqref{vareps} and \eqref{P(A_0)}. It follows from this that
\[
	\P\left\{  \frac{\|w(T_n)-u(T_n)\|_{C(\T)}}{\|w(T_n)\|_{C(\T)}}
	= O(\varepsilon_n)\text{ as $n\to\infty$} \right\} \ge 
	1 - \frac{\text{\rm const}}{\log \log\log(1/\varepsilon)},
\]
where ``const'' does not depend on the parameter $\varepsilon$. Finally, we appeal
to Proposition \ref{pr:oscillation} in order to see that
for every $k\in[2\,,\infty)$,
\begin{align}\label{eq:sup:inf}\textstyle
	\sup_{R>0} \sup_{n\in\N} R^k\,\P\left\{ \sup_{x\in\T}w(T_n\,,x) \ge R \inf_{x\in\T} w(T_n\,,x)
	\right\} <\infty.
\end{align}
This and the Borel-Cantelli lemma together imply
that, for every $p>0$ there exists an a.s.-finite random variable $N_p$ such that
$\|w(T_n)\|_{C(\T)}\le n^p \inf_{x\in\T}w(T_n\,,x)$ for all $n\ge N_p$ almost surely,
and in particular,
\begin{equation}\label{ratio}
	\P\left\{ \left\| \frac{u(T_n)}{w(T_n)}-1\right\|_{C(\T)}
	= O\left(n^{p-2}\right)\text{ as $n\to\infty$} \right\}
	\ge\frac{\text{\rm const}}{\log \log\log(1/\varepsilon)},
\end{equation}
where ``const'' does not depend on $\varepsilon\in(0\,,\exp(-\e^\e))$;
see also \eqref{epsilon_n}. 

We now show that $\| (u(t)/w(t)) - 1\|_{C(\T)} \to 0$ as $t\to \infty$. Define, for $n\geq 0$, 
\begin{align*}
	K_n&=\|u(T_n)-w(T_n)\|_{C(\T)} ,  \\
	M_n&=K_n + (\varepsilon_n + \alpha_n\vee \varepsilon_n) \| w(T_n)\|_{C(\T)},\\ 
	\bm{B}_{n+1}&\textstyle=  \left\{ \omega\in\Omega:\ \sup_{T_{n}\leq t \leq T_{n+1}}
		\| u(t) - w(t) \|_{C(\T)}(\omega) \le M_n \right\}.
\end{align*}
Recall that $\alpha_n$ is defined in \eqref{alpha_n} (see also \eqref{Phi:W:n}) and $\varepsilon_n$ is defined in \eqref{epsilon_n}. 

We claim that for every $k>6$, there exists a constant $A=A(f\,, g)$ such that 
\begin{equation}\label{B_n}
	\P\big( \bm{B}_{n+1}^{\mathsf{c}} \cap \bm{A}_n \cap \bm{A}_0 \big)
	\leq A \left[ 1\wedge \varepsilon_n^{-1} \mathscr{E}\left(\e^{-\gamma T_n}\right)
	\right]^k 
	+ A\e^{-A^{-1}/ \sqrt{T_{n+1}-T_n}}. 
\end{equation}
Once we establish \eqref{B_n}, we proceed exactly as in the argument 
leading to \eqref{vareps}--\eqref{ratio}. Indeed, as can be seen from \eqref{T1},  there exists $q>0$ such that
\[\textstyle
	\sum_{n=1}^\infty \left[ \mathscr{E}\left(
	\e^{-\gamma T_n}\right)/\varepsilon_{n} \right]^k
	\leq \sum_{n=1}^\infty Q_1(n) \lesssim |\log \varepsilon|^{-q}. 
\]
On the other hand, \eqref{T3} shows that 
\[\textstyle
	\sum_{n=1}^\infty \exp(-A^{-1}/ \sqrt{T_{n+1}-T_n})
	=\sum_{n=1}^\infty Q_3(n) \lesssim (\log|\log \varepsilon|)^{-1/3}.
\]
Hence, with high probability,
\[
	\sup_{T_n \leq t \leq T_{n+1}} \frac{ \|w(t)-u(t)\|_{C(\T)} }{ \|w(T_n)\|_{C(\T)}}
	= O(\varepsilon_n\vee \alpha_n)\quad\text{as }n\to\infty,
\]
uniformly over all sufficiently large $n$.  At the same time, \eqref{eq:sup:inf} 
ensures that $\|w(T_n)\|_{C(\T)}/\inf_{x\in\T}w(T_n,x)$ is eventually bounded by 
a power $p$ of $n$ almost surely.  We can combine these facts to find that
for every $p\in(0\,,\eta)$,
\[
   \P\left\{ 
      \sup_{T_{n-1}\le t\le T_n} 
      \left\|\frac{u(t)}{w(t)} - 1\right\|_{C(\T)}
      \!\!\!\!
      = O\bigl(n^{p-\eta}\bigr)\;\text{as }n\to\infty
   \right\}
   \;\ge\;
   1 - \frac{\text{const}}{\bigl[\log\log\log(1/\varepsilon)\bigr]}.
\]
A Taylor expansion
of $\log$ at $x=1$ shows that \eqref{ratio} is equivalent to the statement of Theorem \ref{th:main},
and  completes the proof. Thus, it remains to prove \eqref{B_n}. 

In order to simply the typesetting, let $\lambda_n=\alpha_n \vee \varepsilon_n$
for all $n\in\Z_+$. 

First consider the case that $n=0$. Since $u\equiv v$ since they have
the same initial function, \eqref{B_n} follows from Proposition 
\ref{pr:tracking} (see also the estimation of $J_1$ below). 

Therefore, we need only consider $n\geq 1$,  in which case,
\[\textstyle
 	\P_{\mathscr{F}(T_n)}\left\{ \sup_{T_n \leq t \leq T_{n+1}}
	 \|u(t)-w(t)\|_{C(\T)}  \geq M_n  \right\} \leq J_1+ J_2,
\]
where 
\begin{align*}
	J_1&\textstyle= \P_{\mathscr{F}(T_n)}\left\{ \sup_{T_n \leq t \leq T_{n+1}}
		\|v(t) - w(t) \|_{C(\T)}  \geq \varepsilon_n\|w(T_n)\|_{C(\T)}   \right\},\\
	J_2&\textstyle=  \P_{\mathscr{F}(T_n)}\left\{ \sup_{T_n \leq t \leq T_{n+1}} \|u(t)-v(t)\|_{C(\T)}
		\geq K_n +   \lambda_n \| w(T_n)\|_{C(\T)}  \right\}.  
\end{align*}
Next, we estimate $J_1$ and $J_2$ separately and in turn.

\emph{Estimation of $J_1$.} By Proposition \ref{pr:tracking}, almost surely on $\bm{A}_0$
-- in particular also on $\{\tau(T_n)>T_n\}$ -- 
for every $k \in (6\,,\infty)$ there exists a constant $L=L(k\,,f\,,g)>0$ such that 
\begin{equation}\label{eq:J_1}
J_1 \leq  L[ \mathscr{E}(\e^{-\gamma T_n}) / \varepsilon_{n} ]^k.
\end{equation} 

\emph{Estimation of $J_2$.} 
Since $v(T_n)=w(T_n)$, we may apply the Markov property at time $T_n$
in order to be able to use Proposition \ref{pr:u-v} with $K=K_n=\|u(T_n)-w(T_n)\|_{C(\T)}$
and $\Lambda=\lambda_n \|w(T_n)\|_{C(\T)}$.  Since 
\[
K_n =\|u(T_n)-w(T_n)\|_{C(\T)} \leq \varepsilon_n \|w(T_n)\|_{C(\T)} \quad \text{on  $A_n$},
\]
there exists a constant $A = A(f, g)$ such that
\begin{equation}\label{eq:J_2-1}
J_2 \le A \exp \left(
  - \frac{\lambda_n^2}{ A (\varepsilon_n + \lambda_n) (\varepsilon_n + \lambda_n + \alpha_n) \sqrt{T_{n+1} - T_n}}\right).
\end{equation}
Because $\lambda_n = \alpha_n \vee \varepsilon_n$, we have 
\begin{equation}\label{eq:lambda-square}
\lambda_n^2 \ge 
\frac{1}{6}(\varepsilon_n + \lambda_n)(\varepsilon_n + \lambda_n + \alpha_n).
\end{equation}
Combining \eqref{eq:J_2-1} and \eqref{eq:lambda-square} gives
\begin{equation}\label{eq:J_2-2}
J_2 \le 
A \exp\left(-\frac{1}{6A\sqrt{T_{n+1}-T_n}}\right).
\end{equation}
Inequality \eqref{B_n} follows readily from the above estimates, 
\eqref{eq:J_1} for $J_1$  and \eqref{eq:J_2-2} for $J_2$ .
This completes our proof.\qed 
\appendix
\section{}
Let us conclude with the brief statement and proof of the following elementary inequality.

\begin{lemma}\label{lem:E}
	$\exists c>0\,  \forall\delta\in(0\,,1): \sum_{n=3}^\infty\exp\left( - \delta^{-1/2} (\log n)^{3/2} \right)\le
	c\delta^{1/3}.$
\end{lemma}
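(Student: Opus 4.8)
The plan is to reduce the sum to a convergent $p$-series by a crude but uniform pointwise bound on each summand, and then to compare that $p$-series with an elementary integral. The crucial point is the numerical fact that $3>\e$, so that $\log 3>1$ and hence $\kappa:=\sqrt{\log 3}>1$. Since $t\mapsto\sqrt t$ is increasing, every integer $n\ge 3$ satisfies $(\log n)^{3/2}=(\log n)^{1/2}\log n\ge(\log 3)^{1/2}\log n=\kappa\log n$, so that
\[
	\exp\!\left(-\delta^{-1/2}(\log n)^{3/2}\right)\le n^{-\kappa\delta^{-1/2}}\qquad(n\ge 3,\ \delta\in(0\,,1)).
\]
It is exactly here that the lower limit $n=3$ matters: with $n=2$ one would only have the exponent $\sqrt{\log 2}<1$, and the associated series $\sum n^{-\kappa\delta^{-1/2}}$ would diverge for $\delta$ near $1$.

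Next I would set $s=s(\delta):=\kappa\delta^{-1/2}$, which obeys $s>\kappa>1$ for all $\delta\in(0\,,1)$. Monotonicity of $x\mapsto x^{-s}$ gives the standard integral comparison $\sum_{n\ge 3}n^{-s}\le\int_2^\infty x^{-s}\,\d x=2^{1-s}/(s-1)$, and then $s-1>\kappa-1$ together with $2^{1-s}=2\cdot 2^{-\kappa/\sqrt\delta}$ yields
\[
	\sum_{n=3}^\infty\exp\!\left(-\delta^{-1/2}(\log n)^{3/2}\right)\le\frac{2}{\kappa-1}\,2^{-\kappa/\sqrt\delta}\qquad\forall\,\delta\in(0\,,1).
\]

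To finish it then suffices to verify the scalar inequality $2^{-\kappa/\sqrt\delta}\le\delta^{1/3}$ for all $\delta\in(0\,,1)$; taking logarithms, this is equivalent to $\sqrt\delta\,\log(1/\delta)\le 3\kappa\log 2$. The left-hand side is a smooth function of $\delta$ that vanishes at both endpoints of $(0\,,1)$ and, by a one-line critical-point computation, attains its maximum $2/\e$ at $\delta=\e^{-2}$; since $\log 2>2/3$ and $\kappa>1$ we have $3\kappa\log 2>2>2/\e$, so the inequality holds with room to spare. Substituting back proves the lemma with $c=2/(\kappa-1)=2/(\sqrt{\log 3}-1)$, and any larger constant works equally well.

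There is no genuine obstacle here; the only point that needs a moment's care is ensuring that the exponent $\kappa=\sqrt{\log 3}$ is \emph{strictly} larger than $1$, since this is what makes the comparison series $\sum n^{-s}$ converge uniformly over $\delta\in(0\,,1)$ and simultaneously supplies the slack needed for the final scalar estimate. Everything else is the routine $p$-series/integral comparison together with the elementary identity $\max_{0<\delta<1}\sqrt\delta\,\log(1/\delta)=2/\e$.
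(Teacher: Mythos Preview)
Your proof is correct and takes a genuinely different route from the paper's. The paper bounds the sum by $\int_\e^\infty\exp(-\delta^{-1/2}(\log y)^{3/2})\,\d y$, substitutes $x=\log y$ to get $\int_0^\infty\exp(x-x^{3/2}/\sqrt\delta)\,\d x$, and then rescales via $x=\delta^{1/3}z$ so that the factor $\delta^{1/3}$ emerges naturally from the Jacobian, leaving a $\delta$-independent integral. Your approach instead exploits the numerical fact $\log 3>1$ to linearize $(\log n)^{3/2}\ge\kappa\log n$ with $\kappa=\sqrt{\log 3}>1$, reducing to a $p$-series whose tail is bounded by $\tfrac{2}{\kappa-1}2^{-\kappa/\sqrt\delta}$; you then dominate this exponentially small quantity by $\delta^{1/3}$ via the elementary calculus fact $\max_{(0,1)}\sqrt\delta\,\log(1/\delta)=2/\e$. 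Your argument actually proves the much stronger bound $O(2^{-\kappa/\sqrt\delta})$ and then throws most of it away; the paper's scaling argument is tighter in the sense that $\delta^{1/3}$ is exactly what the change of variables produces, and it would adapt immediately if the exponent $3/2$ were replaced by any $\alpha>1$ (giving $\delta^{1/(2\alpha-1)}$), whereas your linearization step is tied to the specific lower limit $n=3$.
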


\begin{proof}
	By the integral test of calculus, the sum above is 
	\begin{align*}
		&\textstyle\le\int_\e^\infty\exp\left( - c\delta^{-1/2} (\log y)^{3/2} \right)\d y
			< \int_0^\infty\exp\left(x -\{x^{3/2}/\sqrt\delta\}\right)\d x\\
		&\textstyle=\delta^{1/3}\int_0^\infty\exp\left(\delta^{1/3} z - z^{3/2}\right)\d z
			\le c\delta^{1/3}
			\qquad\forall \delta\in(0\,,1),
	\end{align*}
	where $c=\int_0^\infty\exp(z-z^{3/2})\,\d z$.
\end{proof}


\providecommand{\bysame}{\leavevmode\hbox to3em{\hrulefill}\thinspace}
\providecommand{\MR}{\relax\ifhmode\unskip\space\fi MR }
\providecommand{\MRhref}[2]{%
  \href{http://www.ams.org/mathscinet-getitem?mr=#1}{#2}
}
\providecommand{\href}[2]{#2}

\end{document}